\newcommand{\subjclass}[2][2010]{%
  \let\@oldtitle\@title%
  \gdef\@title{\@oldtitle\footnotetext{#1 \emph{Mathematics subject classification.} #2}}%
}
\newcommand\redsout{\bgroup\markoverwith{\textcolor{red}{\rule[0.5ex]{2pt}{0.4pt}}}\ULon}
\newcommand{\leqc}{\lesssim}
\newcommand{\grad}{\nabla}
\newcommand{\Qt}{\mathcal{Q}}
\newcommand{\one}{\mathbf{1}}
\newcommand{\ccdot}{\;\cdot\;}
\newcommand{\R}{\mathbb{R}}
\newcommand{\N}{\mathbb{N}}
\newcommand{\T}{\mathbb{T}}
\newcommand{\Pt}{\mathcal{P}}
\newcommand{\dee}{\mathrm{d}}
\newcommand{\ds}{\dee s}
\newcommand{\dt}{\dee t}
\newcommand{\dx}{\dee x}
\newcommand{\dy}{\dee y}
\renewcommand{\P}{\mathbf{P}}
\newcommand{\E}{\mathbf{E}}
\theoremstyle{plain}
\newtheorem{theorem}{Theorem}
\newtheorem{proposition}{Proposition}[section]
\newtheorem{assumption}{Assumption}
\newtheorem{lemma}[proposition]{Lemma}
\newtheorem{definition}{Definition}[section]
\newtheorem{corollary}{Corollary}
\theoremstyle{definition}
\newtheorem{remark}{Remark}[section]
\numberwithin{equation}{section}
\begin{document}

\title{A pathwise approach to the enhanced dissipation of passive scalars advected by shear flows}
\author{Victor Gardner \and Kyle L. Liss \and Jonathan C. Mattingly
} 
\maketitle

\begin{abstract}
We develop a framework for studying  the enhanced dissipation of passive scalars advected by shear flows based on analyzing the particle trajectories of the stochastic differential equation associated with the governing drift-diffusion equation. We consider both shear flows on $\T^2$ and radially symmetric shears on $\R^2$ or the unit disk. Using our probabilistic approach, we are able to recover the well-known enhanced dissipation timescale for smooth shear flows on $\T^2$ with finite-order vanishing critical points \cite{BCZ17, ABN22, wei2021diffusion, Villringer24} and a generalized version of the results for radially symmetric shear flows from \cite{CZD20}. We also obtain results for shear flows with singularities and critical points where the derivative vanishes to infinite order. The proofs are all based on using Girsanov's theorem to reduce enhanced dissipation to a quantitative control problem that can be solved by leveraging the shearing across streamlines. Our method also has the feature that it is local in space, which allows us also to obtain estimates on the precise decay rate of solutions along each streamline in terms of the local shear profile.   
\end{abstract}

\setcounter{tocdepth}{3}
{\small\tableofcontents}

\section{Introduction}

Consider a passive scalar $f$, defined on a two-dimensional spatial
domain $D$, that is advected by an autonomous, divergence-free
velocity field $u\colon D \to \R^2$ and simultaneously undergoes molecular diffusion. The dynamics of $f$ are governed by the advection-diffusion equation
    \begin{equation} \label{eq:ADE}
    \begin{cases}
        \partial_t f + u \cdot \grad f = \nu \Delta f, \\ 
        f|_{t=0} = f_0,
    \end{cases}
    \end{equation}
    where $\nu > 0$ is a diffusivity parameter and $f_0\colon D \to \R$ is a mean-free initial data. In this paper, we study the quantitative decay rate of solutions to \eqref{eq:ADE} for $0< \nu \ll 1$ in the cases where $u$ is a shear flow on the two-dimensional torus $\T^2 \cong [0,1)^2$ and a radially symmetric shear on either $\R^2$ or the unit disk $\mathbb{D} = \{\mathbf{x} \in \R^2: |\mathbf{x}| \le 1$\}. That is, we consider velocity fields of the form
    \begin{equation} \label{eq:b(y)ex}
    u\colon\T^2 \to \R^2, \quad u(x,y) = b(y)\hat{e}_x 
    \end{equation}
    and 
    \begin{equation}\label{eq:v(r)etheta} 
    u\colon\R^2\text{ or }\mathbb{D} \to \R^2, \quad u(r,\theta) = r v(r) \hat{e}_\theta,
    \end{equation}
    where $b\colon\T \to \R$, $v\colon I \to \R$ with $I$ denoting either
    $[0,\infty)$ or $[0,1]$, and $(r,\theta) \in I \times [0,2\pi)$
    are the standard polar coordinates. Here $\hat{e}$, with the appropriate
    subscript, denotes the unit vector in the specified coordinate direction. 
    When $D$ is the unit disk, \eqref{eq:ADE} is supplemented with homogeneous Neumann boundary conditions.
    
   In each of the settings above, the average of the solution along the streamlines of $u$ decouples from the rest of the dynamics and simply solves a one-dimensional diffusion equation. For example, when $D = \T^2$ and $u$ is given by \eqref{eq:b(y)ex}, the streamline average 
    $$ \langle f \rangle (t,y) : = \int_\T f(t,x,y) \dx  $$
    is a mean-zero solution of
    \begin{equation} \label{eq:1dheat}
    \partial_t \langle f \rangle = \nu \partial_{yy} \langle f \rangle 
    \end{equation}
    on $\T$, and hence decays to zero only on the slow $\nu^{-1}$ diffusive timescale. On the other hand, it is known that the interaction between the velocity differential across streamlines and the diffusion can cause solutions of \eqref{eq:ADE} to homogenize \textit{along streamlines} (i.e., for $f - \langle f \rangle$ to decay) on a timescale much faster than $\nu^{-1}$ when $\nu \ll 1$ \cite{Bajer01, Rhines83, ECZM19, BCZ17, BCZE22}. This phenomenon is typically referred to as \textit{enhanced dissipation}. Precisely quantifying such enhanced decay rates in terms of the shearing properties of $u$ is the problem that we are interested in here.

    The dissipation enhancement of passive scalars advected by shear flows of the form \eqref{eq:b(y)ex} and \eqref{eq:v(r)etheta} has been studied extensively in the literature, primarily from the viewpoints of hypocoercivity \cite{BCZ17, CZD20, CZ20}, quantitative hypoelliptic smoothing \cite{ABN22}, and spectral theory \cite{wei2021diffusion, He22, CZCW20}. The case of smooth shear flows was thoroughly investigated in \cite{BCZ17, wei2021diffusion,ABN22}, where it was shown that if $b\colon\T \to \R$ possesses at most finitely many critical points and $b'$ vanishes to finite order at each of them, then
    \begin{equation} \label{eq:introdecay}
        \|f(t) - \langle f\rangle(t)\|_{L^2} \leqc  e^{-\frac{t}{T_\nu}}\|f_0 - \langle f_0 \rangle\|_{L^2},
    \end{equation}
  where $T_\nu \approx \nu^{-\frac{N+1}{N+3}}$ and $N \in \N$ denotes the maximal order of vanishing of $b'$. In other words, $f$ converges to its streamline average on the timescale $T_\nu$, which is much smaller than the $\nu^{-1}$ diffusive timescale as $\nu \to 0$.  Notice that here $T_\nu \gtrsim \nu^{-1/2}$, as a smooth function $b\colon \T \to \R$ necessarily has at least one critical point. This decay timescale can be improved upon for rough shears. Indeed, it is known that for any $\alpha \in (0,1)$ there exists $b\colon \T \to \R$ that is sharply $\alpha$-H\"{o}lder continuous at each point and such that \eqref{eq:introdecay} holds with $T_\nu \approx \nu^{-\frac{\alpha}{\alpha+2}}$ \cite{wei2021diffusion,CZCW20}. For radially symmetric shears of the form \eqref{eq:v(r)etheta}, there is the work \cite{CZD20} that studies \eqref{eq:ADE} on $\R^2$ in the case where $v(r) = r^q$ for some $q \ge 1$. Using an adaptation of the methods from \cite{BCZ17}, it was shown that solutions of \eqref{eq:ADE} converge to their $\theta$-average 
\begin{equation} \label{eq:thetaaverage}
    \langle f \rangle(t,r):=\frac{1}{2\pi}\int_0^{2\pi} f(t,r,\theta)\dee \theta
\end{equation}
on a timescale $T_\nu \leqc (1+|\log \nu|)\nu^{-\frac{q}{q+2}}$.
    
    Our goal in this work is to revisit the problem of shear flow enhanced dissipation from a pathwise perspective based on studying the random particle trajectories associated with \eqref{eq:ADE} and viewing the convergence rate to equilibrium in terms of the corresponding Markov process. In the setting where $u(x,y) = b(y)\hat{e}_x$ for $b\colon\T \to \R$, \eqref{eq:ADE} becomes 
    \begin{equation} \label{eq:ADEbshearintro}
        \begin{cases}
            \partial_t f + b(y)\partial_x f = \nu \Delta f, \\
            f|_{t=0} = f_0
        \end{cases}
    \end{equation}
    and the corresponding Markov process is generated by the It\^{o} stochastic differential equation (SDE)
\begin{equation}\label{eq:SDEintro}
\begin{cases}
        \dee x_t = -b(y_t)\dt + \sqrt{2\nu}\dee W_t, \\ 
        \dee y_t = \sqrt{2\nu} \dee B_t.
\end{cases}
    \end{equation}
Here, $W_t$ and $B_t$ are two independent, standard Brownian motions on $\T$ defined on a common probability space $(\Omega, \mathcal{F},\P)$. The solution of \eqref{eq:ADEbshearintro} is then given by $f(t,x,y) = \E f_0(x_t,y_t)$, where $(x_t,y_t)$ solves \eqref{eq:SDEintro} with initial condition $(x_0,y_0) = (x,y) \in \T^2$ and $\E$ denotes averaging with respect to the probability measure $\P$. Equivalently, the solution may be written as 
\begin{equation} \label{eq:ProbRep2}
f(t,x,y) = \int_{\T^2} f(t,x',y')\Pt_t(x,y,\dee x', \dee y'), 
\end{equation}
where $\Pt_t\colon\T^2 \times \mathcal{B}(\T^2) \to [0,1]$ is the Markov transition kernel associated with \eqref{eq:SDEintro} defined by 
\begin{equation}\label{eq:kernelintro}
    \Pt_t(x,y,A) = \P\big((x_t,y_t) \in A\text{ }\big|\text{ }(x_0,y_0) = (x,y)\big).
\end{equation}
Note that $\Pt_t(x,y,\ccdot)$ is simply the law of the solution of \eqref{eq:SDEintro} with initial condition $(x,y)$.

To see how one may think about enhanced dissipation probabilistically, observe that by \eqref{eq:ProbRep2}, the solution of \eqref{eq:ADEbshearintro} satisfies
\begin{align}
    \left|f(t,x,y) - \int_\T f(t,\tilde{x},y) \dee \tilde{x}\right| &= \left|\int_\T \left( \int_{\T^2}f(t,x',y')[\Pt_t(x,y,\dee x', \dee y') -\Pt_t(\tilde{x},y,\dee x', \dee y')]\right)\dee \tilde{x} \right|\nonumber \\ 
    &\le \|f_0\|_{L^\infty} \sup_{x,\tilde{x},y \in \T}\|\Pt_t(x,y,\ccdot) - \Pt_t(\tilde{x},y,\ccdot)\|_{TV}, \label{eq:streamlineheuristic}
\end{align}
where $\|\mu_1 - \mu_2\|_{TV}$ denotes the total variation distance
between two probability measures $\mu_1$ and $\mu_2$.\footnote{Throughout this
note, we use the probabilist's normalization of the TV-distance which
yields distance between 0 and 1 inclusively.} The calculation in \eqref{eq:streamlineheuristic} says that the speed
of homogenization along streamlines is determined by the rate at which
the probability laws of two solutions of \eqref{eq:SDEintro} with
initial conditions on the same streamline converge to each other in
the total variation metric. Heuristically, one expects this
convergence to occur on a timescale comparable to the time it takes
the variance of $x_t$ to become $\nu$-independent. From this
viewpoint, enhanced dissipation becomes possible because the shearing
interacts with the random fluctuations across streamlines to cause the
variance of $x_t$ to grow much faster than it would from the effects
of Brownian motion alone. More precisely, suppose that there exists a strictly increasing, continuous function $\varphi:[0,h_0] \to (0,\infty)$ such that for every $y \in \T$ there is a choice $\iota(y) \in \{1,-1\}$ so that
\begin{equation} \label{eq:varphiintro}
    |b(y+\iota(y) h) - b(y)| \ge \varphi(h) \quad \forall 0<h\le h_0.
\end{equation}
Assuming the minimal velocity differential across streamlines described by \eqref{eq:varphiintro}, one predicts by solving \eqref{eq:SDEintro} to have a lower bound on the variance of $x_t$ that scales in $t$ and $\nu$ like 
\begin{equation} \label{eq:VarHeurstic}
\sqrt{\mathrm{Var}(x_{t})} \gtrsim \int_0^{t} |b(y_0 + \iota(y_0) \sqrt{2\nu s}) - b(y_0)|\ds \gtrsim t \varphi(\sqrt{\nu t}). 
\end{equation}
Given \eqref{eq:varphiintro}, it is natural then to hope that solutions of \eqref{eq:ADEbshearintro} converge to their streamline average on a timescale $T_\nu$ defined by
\begin{equation} \label{eq:Tnu}
    T_\nu = \inf\{t \ge 0: t \varphi(\sqrt{\nu t}) = 1\},
\end{equation}
which one can easily check always satisfies $\lim_{\nu \to 0} \nu T_\nu = 0$ (see Remark~\ref{rem:Tnu3}). We make the heuristics above rigorous with an appropriate application of Girsanov's theorem that reduces the estimate
\begin{equation}
    \|\Pt_T(x,y,\ccdot) - \Pt_T(\tilde{x},y,\ccdot)\|_{TV} \le 1-\epsilon
\end{equation}
for some $T \leqc T_\nu$ and $\epsilon \in (0,1)$ to a quantitative control problem that can be solved by leveraging the shearing across streamlines. 

In the $\T^2$ setting, under a fairly general assumption on $b\colon \T \to \R$ that ensures \eqref{eq:varphiintro} is satisfied for some $\varphi$, we prove in Theorem~\ref{thm:T2general} that \eqref{eq:introdecay} indeed holds with $T_\nu$ as defined in \eqref{eq:Tnu}. As simple corollaries of Theorem~\ref{thm:T2general}, we recover the results from \cite{BCZ17,wei2021diffusion,ABN22} for smooth shears on $\T^2$ with critical points vanishing to finite order, but are also able to cover examples with critical points vanishing to infinite order and shears with finitely many points where $b'$ becomes singular. For radial shear flows, we obtain sharp enhanced dissipation estimates in a general smooth setting on $\R^2$ and the unit disk $\mathbb{D}$ that contains the setting considered in \cite{CZD20}. With just minor modifications to the argument, we can also treat singular velocity fields on $\mathbb{D}$ of the form $r^{-\alpha} \hat{e}_\theta$ for $0 < \alpha \le 1$. The case $\alpha = 1$ corresponds to the velocity field generated by a point vortex stationary solution of the 2d Euler equation, and hence is of natural physical interest. For precise statements of our results in the radially symmetric setting, see Theorem~\ref{thm:RadialGen} and Remark~\ref{rem:vortex}. The proofs are essentially the same as they are for shears on $\T^2$, and in this sense we view our probabilistic approach as quite robust. 

Besides allowing us to obtain enhanced dissipation estimates in a wide range of situations with essentially a single proof, our framework has the additional feature that it naturally treats the convergence to equilibrium in a streamline-by-streamline manner. More specifically, in view of \eqref{eq:streamlineheuristic} and the first inequality in \eqref{eq:VarHeurstic}, the time $T_\nu$ defined in \eqref{eq:Tnu} naturally generalizes to a $y$-dependent timescale that determines the expected rate of homogenization along the streamline at height $y$ in terms of the local velocity differential. As a consequence of this, our proofs imply as relatively easy corollaries estimates that quantify the local decay rate of solutions on each streamline; see Theorem~\ref{thm:T2local} for a precise statement. Such localized estimates are new to the best of our knowledge and while they can likely be obtained with an adaptation of previous proofs of enhanced dissipation, the probabilistic method has the advantage that they appear quite naturally.

While preparing the present manuscript, the work \cite{Villringer24} appeared that also considers shear flow enhanced diffusion from a stochastic viewpoint. The author works in the setting of smooth shears on $\T^2$ and reproduces the results of \cite{BCZ17,ABN22,wei2021diffusion} using the Malliavin calculus. The proof hinges on the fact that the interaction between the shearing and diffusion causes one of the eigenvalues of the Malliavin covariance matrix to grow at a rate much faster than $\nu t$. This arises from the same heuristics described above and captures the content of \eqref{eq:VarHeurstic}, but from an ``infinitesimal'' viewpoint. One can thus view \cite{Villringer24} as treating the special case of smooth shears with critical points vanishing to finite order with an infinitesimal control argument, rather than reducing to a control argument for finite perturbations using Girsanov's theorem as we do here. A stochastic viewpoint has also been used previously in \cite{CZDrivas19} to prove \textit{upper} bounds on enhanced dissipation rates for shear flows in various settings, but this is quite different from the ideas used in this paper and \cite{Villringer24} to obtain \textit{lower} bounds.

We should conclude this introductory discussion by remarking that the study of enhanced dissipation extends well beyond the setting of passive scalars advected by autonomous, 2d shear flows considered here. Without any attempt to be exhaustive, we mention that enhanced dissipation results have been established for passive scalars transported by shear flows in higher dimensions \cite{CZG21}, general autonomous Hamiltonian flows in 2d \cite{Vukadinovic21, BCZE22, DolceHamiltonian24}, and incompressible flows assumed to satisfy suitable dynamical conditions (e.g., a mixing rate for the measure preserving $\nu = 0$ system)  \cite{FengIyer19,ECZM19, CKRZ08, Zlatos10}. Enhanced dissipation has also been proven for solutions of the linearized and fully nonlinear 2d Navier-Stokes equations in the vicinity of certain planar shear flows \cite{WeiZhangKolmogorov20, WeiKolmogorov19, ElgindiPoiseuille20, DelZotto23, BeekieDepletion24, BGM17, BVW18, WeiZhangCouette, MasmoudiZhao2dCouette} and exact vortex solutions \cite{GallayVortex18, WeiZhangLiOseen17}. Lastly, there has been significant recent progress in constructing time-dependent velocity fields such that solutions of the associated advection diffusion decay at an optimal rate much faster than the polynomial-in-$\nu$ enhanced dissipation timescales induced by shear flows \cite{ELM23, IyerCooperman24,CZabc24, BBPSenhanced, Zworski23}. Regarding extensions of the ideas in the present paper, the framework we use here is certainly not limited to shear flows and it would be natural to next study possible applications in the setting of general autonomous Hamiltonian flows in two dimensions.

\subsection*{Organization of the paper}

The remainder of this paper is organized as follows. In Section~\ref{sec:results}, we formulate precisely our assumptions and state our main results. Section~\ref{sec:probability} is devoted to making rigorous our probabilistic framework that reduces enhanced dissipation to a quantitative control problem. In Sections~\ref{sec:T2proof} and~\ref{sec:Radialproofs}, we solve the relevant control problems on $\T^2$ and $\R^2$ (or the unit disk), respectively, to complete the proofs of our main theorems. We conclude in Section~\ref{sec:local} with the arguments necessary to upgrade our results to local enhanced dissipation estimates. 

\vspace{0.2cm}

\noindent \textbf{Acknowledgements:} J.M and K.L thank Theo Drivas for suggesting to study enhanced dissipation from a probabilistic perspective and the Simons Center for Geometry and Physics at Stony Brook University for hospitality during the conference ``Small Scale Dynamics in Fluid Motion,'' where initial discussions on this project began. The authors also thank Tarek Elgindi and Raj Beekie for helpful comments on a first draft of this paper and the NSF for its support through the RTG grant DMS-2038056.

\section{Assumptions and main results} \label{sec:results}

In this section we state precisely our assumptions and main results. We begin with the $\T^2$ case and then discuss our results in the radially symmetric setting.

\subsection{Results for shears on $\T^2$} \label{sec:T2results} 

Here we present our assumptions and results for shear flows $u\colon \T^2 \to \R^2$ of the form
\begin{equation} \label{eq:T2shear}
u(x,y) = 
\begin{pmatrix}
    b(y) \\ 0
\end{pmatrix}
,
\end{equation} 
where $b\colon  \T \to \R$. Recall that in this case, \eqref{eq:ADE} becomes 
\begin{equation} \label{eq:ADEbshear}
\begin{cases}
    \partial_t f + b(y)\partial_x f = \nu \Delta f,\\ 
    f|_{t=0} = f_0.
\end{cases}
\end{equation}
As mentioned earlier, the average of the solution along streamlines just solves the one-dimensional heat equation, and so we will always assume that the initial data for \eqref{eq:ADEbshear} satisfies 
\begin{equation}\label{eq:T2meanfree}
    \int_\T f_0(x,y)\dx = 0 \quad \forall y\in \T,
\end{equation}
which is propagated by the evolution of \eqref{eq:ADEbshear}. To simplify the statements of our main results in this setting, we introduce a precise definition of enhanced dissipation which states essentially that solutions of \eqref{eq:ADEbshear} decay on a timescale that is $o(\nu^{-1})$ as $\nu \to 0$.
\begin{definition}[Dissipation
  Enhancing] \label{def:ED}
    We say that a shear flow of the form \eqref{eq:T2shear} is dissipation enhancing with rate function $\lambda\colon (0,1) \to (0,\infty)$ if $$\lim_{\nu \to 0} \frac{\nu}{\lambda(\nu)} = 0$$
    and there exist constants $C, c > 0$ such that for all $f_0 \in L^2(\T^2)$ satisfying \eqref{eq:T2meanfree} the solution of \eqref{eq:ADEbshear} obeys
    \begin{equation} \label{eq:EDdef}
        \|f(t)\|_{L^2} \le C e^{-c\lambda(\nu)t}\|f_0\|_{L^2}.
    \end{equation}
\end{definition}

\subsubsection*{Enhanced dissipation under a general assumption}

We begin with a theorem that gives enhanced dissipation and a
quantitative lower bound of $\lambda(\nu)$ under a fairly general
assumption on $b$. A range of applications will be given in the next section. Roughly speaking, we require $b$ to possess finitely many critical points and be smooth away from a discrete set of points where $b'$ has a suitable discontinuity. Some additional conditions are also imposed in the vicinity of points where $b'$ vanishes to infinite order or blows up. Precisely, we make the following assumption.

\begin{assumption} \label{Ass:T2general}
  The function $b\colon\T \to \R$ is continuous, smooth away from a discrete set of points $\{\bar{y}_i\}_{i=1}^{N} \subseteq \T$, and possesses at most finitely many critical points $\{y_i\}_{i=1}^{M} \subseteq \T$. Moreover: 
  \begin{itemize}
      \item[(a)] For each $1 \le i \le N$, the one-sided limits
      $\lim_{y\to \bar{y}_i^\pm} |b'(y)|$ both exist and are valued in $(0,\infty].$ In the case that a one-sided limit is infinite, its convergence is eventually monotone and there exists $C > 0$ such that for small positive $h$ the function $F(h) = b(\bar{y}_i\pm h) - b(\bar{y}_i)$ satisfies 
      $$|F(h)| \le Ch|F'(h)| \quad \text{and} \quad |F''(h)| \le C \frac{|F'(h)|}{h}.$$
      \item[(b)] There exists $C \ge 1$ such that if $b'$ vanishes to infinite order at some $y_i$ (i.e., $b^{(k)}(y_i) = 0$ for all $k$), then the convergence of both the one-sided limits $\lim_{y\to y_i^{\pm}} |b'(y)| = 0$ is eventually monotone and the function $F(h) = b(y_i+h)-b(y_i)$ satisfies
      $$\limsup_{h \to 0} \frac{\sqrt{|F(h)|}}{ |h| |F'(Ch)|} < \infty.  $$
  \end{itemize}
\end{assumption}

\begin{remark}
    In the well-studied case where $b$ has only finitely many critical points and $b'$ vanishes to finite order at each of them, then (a) and (b) are vacuously true. In the event that $b'$ becomes singular or critical points vanishing to infinite order exist, then the technical estimates of (a) and (b) above are made primarily to rule out pathological behaviors and are easily seen to be true in ``typical'' examples. The bounds in (a) say essentially that each blow up of $b'$ resembles the behavior of $x\mapsto x^p$ with $p \in (0,1)$ near $x = 0$, and are true for instance at any $\bar{y}_i$ where 
    $$b(\bar{y}_i + h) - b(\bar{y}_i) \propto |h|^\alpha \log^\beta(|h|^{-1})$$ for some $\alpha \in (0,1)$ and $\beta \in \R$. Regarding (b), this assumption is true for common functions that vanish to infinite order, and holds for example at any $y_i$ where
    $$ b(y_i+h) - b(y_i) \propto e^{-\frac{1}{|h|^p}} $$
    for some $p > 0$.
\end{remark}

Since Assumption~\ref{Ass:T2general} ensures that the set of points where $b'$ either vanishes or has a discontinuity is finite, there exists some $h_0 > 0$ with the property that for any $y \in \T$ there is a choice $\iota(y) \in \{1,-1\}$ such that no elements of $S = \{\bar{y}_1, \ldots, \bar{y}_N, y_1, \ldots, y_M\}$ lie strictly between $y$ and $y+\iota h_0$. If $y$ is far enough from the set $S$, then $\iota(y)$ is not necessarily unique, but if, for instance, $y$ is sufficiently close to a critical point $y_i$, then $\iota(y)$ is the particular choice such that $h \mapsto y + \iota(y)h$ moves away from $y_i$. As $b'$ can only change sign at the points in $S$, the definition of $\iota(y)$ implies that $h \mapsto |b(y+\iota(y)h) - b(y)|$ is strictly increasing for $h \in [0,h_0]$. In particular, $b$ possesses a quantifiable minimal velocity differential across streamlines in the sense that there is a monotone increasing function $\varphi \in C([0,h_0];\R_{\ge 0})$ with $\varphi(0) = 0$ and such that
\begin{equation}\label{eq:varphi}
  |b(y+\iota(y) h) - b(y)| \ge \varphi(h) \quad \forall h\in[0,h_0].
  \end{equation}
As discussed in the introduction, it is natural to expect the enhanced dissipation rate of \eqref{eq:ADEbshear} to be determined by the growth rate of $\varphi$ through the relation~\eqref{eq:Tnu}. This is made precise by our first main result, which is stated as follows.

\begin{theorem} \label{thm:T2general}
    Suppose that $b\colon  \T \to \R$ satisfies Assumption~\ref{Ass:T2general}. Let $\varphi$ be as in \eqref{eq:varphi} and let 
    $$ T_\nu = \inf\{t\ge0: t \varphi(\sqrt{\nu t}) = 1\}. $$
   Then, $u(x,y) = (b(y),0)$ is dissipation enhancing with rate function $\lambda(\nu) = T_\nu^{-1}$.
\end{theorem}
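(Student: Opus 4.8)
The plan is to implement the probabilistic strategy sketched in the introduction. By \eqref{eq:streamlineheuristic}, it suffices to show that there exist constants $\epsilon \in (0,1)$ and $C>0$ such that for all sufficiently small $\nu$,
\begin{equation} \label{eq:goalTV}
    \sup_{x,\tilde x, y \in \T} \|\Pt_{T_\nu}(x,y,\ccdot) - \Pt_{T_\nu}(\tilde x,y,\ccdot)\|_{TV} \le 1-\epsilon.
\end{equation}
Once this is known, a standard semigroup/iteration argument (run the dynamics over successive time windows of length $T_\nu$, using the Markov property and the contraction of the $L^2$ norm, together with the fact that $\langle f\rangle$ decays only diffusively but is subtracted off) upgrades \eqref{eq:goalTV} to the exponential decay \eqref{eq:EDdef} with $\lambda(\nu) = T_\nu^{-1}$; the condition $\lim_{\nu\to 0}\nu T_\nu = 0$ needed for Definition~\ref{def:ED} is the elementary fact recorded in Remark~\ref{rem:Tnu3}. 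So the heart of the matter is \eqref{eq:goalTV}.

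To prove \eqref{eq:goalTV} I would fix the starting height $y$ and the two starting $x$-coordinates $x, \tilde x$, and use a coupling together with Girsanov's theorem. Run one copy $(x_t,y_t)$ of \eqref{eq:SDEintro} from $(x,y)$ driven by $(W_t,B_t)$. For the second copy, use the \emph{same} vertical Brownian motion $B_t$ (so $\tilde y_t = y_t$ for all $t$, keeping both particles on a common moving streamline height) but a perturbed horizontal driving noise $\tilde W_t = W_t + \frac{1}{\sqrt{2\nu}}\int_0^t g_s\,\dee s$ for a control $g$ to be chosen; then $\tilde x_t$ solves $\dee \tilde x_t = -b(y_t)\,\dt + g_t\,\dt + \sqrt{2\nu}\,\dee W_t$ with $\tilde x_0 = \tilde x$. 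The Cameron–Martin/Girsanov cost of this change of drift is controlled by $\E\exp(\frac{c}{\nu}\int_0^{T_\nu} g_s^2\,\ds)$, so for the TV bound we need a control $g$ that (i) steers $x_{T_\nu}$ and $\tilde x_{T_\nu}$ together — more precisely, makes the two laws overlap on an event of probability bounded below — while (ii) keeping $\frac{1}{\nu}\int_0^{T_\nu} g_s^2\,\ds$ bounded with high probability. This is the ``quantitative control problem'' referred to in the introduction, and it will be set up cleanly in Section~\ref{sec:probability}; here I would simply invoke that reduction.

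The real work — and the step I expect to be the main obstacle — is solving the control problem using the shearing, i.e.\ showing that the minimal velocity differential \eqref{eq:varphi} lets one equalize the horizontal positions at a cost $O(\nu)$ by time $T_\nu$. The mechanism is exactly \eqref{eq:VarHeurstic}: because $\tilde x_t - x_t = (\tilde x - x) + \int_0^t g_s\,\ds$ has a deterministic part we control and the free ($g\equiv 0$) horizontal spread comes from $-\int_0^t [b(y_s) - b(y_0)]\,\ds$, and since $y_s = y_0 + \sqrt{2\nu}B_s$ wanders a distance $\sim\sqrt{\nu s}$ from $y_0$ in the favourable direction $\iota(y_0)$, the unforced horizontal displacement accumulates at least $\gtrsim t\,\varphi(\sqrt{\nu t})$, which equals $1$ (an $O(1)$, $\nu$-independent length, i.e.\ a full period of $\T$) precisely at $t = T_\nu$. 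Thus one should be able to choose $g$ supported on an initial sub-window, of size comparable to the whole path's vertical fluctuation, with $|g_s|\lesssim |b(y_s) - b(y_0)|$ and hence $\frac{1}{\nu}\int_0^{T_\nu} g_s^2\,\ds \lesssim \frac{1}{\nu}\int_0^{T_\nu} \varphi(\sqrt{\nu s})^2\,\ds$, which by the definition of $T_\nu$ and monotonicity of $\varphi$ is $O(1)$. Making this rigorous requires: controlling the Brownian motion $B$ on a good event (it stays in the right half-neighbourhood of $y_0$ of the right size, and does not exit $[0,h_0]$ away from $S$), which is where Assumption~\ref{Ass:T2general}(a),(b) enter — they guarantee $\varphi$ behaves well enough (no logarithmic losses that would break the integral estimate, and the infinite-order-vanishing and blow-up points do not destroy the bound $|g_s| \lesssim \varphi$ when $B$ makes a large-but-rare excursion); and then a short argument (e.g.\ a second Girsanov step moving $g$, or a direct density comparison) to pass from ``the controlled and uncontrolled $x_{T_\nu}$ agree in law on a good event'' to the TV statement \eqref{eq:goalTV}. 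The delicate points are the integrability of $\varphi(\sqrt{\nu s})^2/\nu$ near $s = T_\nu$ (handled by monotonicity and the defining relation $T_\nu\varphi(\sqrt{\nu T_\nu}) = 1$) and the treatment of the exceptional set $S$ and the infinite-order/singular points, which is exactly what the technical hypotheses in Assumption~\ref{Ass:T2general} are there to manage.
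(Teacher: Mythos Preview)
Your coupling controls the wrong Brownian motion. By taking the same vertical noise $B_t$ for both copies you force $\tilde y_t = y_t$ for all $t$, and then both particles feel the \emph{identical} drift $-b(y_t)$. The difference $\tilde x_t - x_t$ is therefore $\tilde x - x + \int_0^t g_s\,\dee s$, and the shear plays no role at all in closing it: the quantity $-\int_0^t[b(y_s)-b(y_0)]\,\dee s$ you invoke is the fluctuation of a \emph{single} particle about its mean, not the separation of the two copies. To make the particles meet you must have $\int_0^{T_\nu} g_s\,\dee s = x-\tilde x = O(1)$, and then by Cauchy--Schwarz the Girsanov cost $\tfrac{1}{2\nu}\int_0^{T_\nu} g_s^2\,\dee s \ge \tfrac{(x-\tilde x)^2}{2\nu T_\nu}\to\infty$ since $\nu T_\nu\to 0$. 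Your claimed bound $\tfrac{1}{\nu}\int_0^{T_\nu}\varphi(\sqrt{\nu s})^2\,\dee s = O(1)$ is also false: for $\varphi(h)=h^{N+1}$ a direct computation gives $\nu^{-2/(N+3)}\to\infty$, so ``monotonicity and the defining relation'' do not save it.

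The paper's control is on the \emph{vertical} noise: one perturbs $B_t$ by an adapted $V_t$ so that $\tilde y_t = y_t + h_t$ with $h_t = \sqrt{2\nu}\int_0^t V_s\,\dee s$. Now the two particles sit on different streamlines and the shear does the horizontal work for free: $\dee\rho_t = -[b(y_t+h_t)-b(y_t)]\,\dee t$. The control is built in two phases (after waiting for $y_t$ to reach $y_0+2\ell_\nu$ with $\ell_\nu=\sqrt{\nu t_\nu(y_0)}$): first take $V_t = t_\nu^{-1/2}$ until $h_t=\ell_\nu\sqrt{\rho_t}$, then set $V_t$ so that the relation $h_t=\ell_\nu\sqrt{\rho_t}$ is preserved, which forces $h_t$ and $\rho_t$ to reach zero simultaneously. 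The Girsanov cost here is $\int V_t^2\,\dee t$ (no $1/\nu$), and both phases contribute $O(1)$ by construction and by Lemma~\ref{lem:technical1} (which gives $t_\nu\approx (\ell_\nu|b'(y_0+\ell_\nu)|)^{-1}$). Assumption~\ref{Ass:T2general}(a),(b) enter only to make these length/timescale relations hold near singular or infinitely-flat points; they are not about integrability of $\varphi^2/\nu$.
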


\begin{remark}\label{rem:Lp}
    The choice of $L^2$ in Definition~\ref{def:ED} instead of some other $L^p$ norm was irrelevant. Because our argument is based on \eqref{eq:streamlineheuristic}, our proof of Theorem~\ref{thm:T2general} actually gives
    \begin{equation}\label{eq:Lpremark}
    \|f(t)\|_{L^\infty} \le C e^{-c\lambda(\nu)t}\|f_0\|_{L^\infty}.
    \end{equation}
    Since the adjoint equation of \eqref{eq:ADEbshear} is the same except for changing the sign of $b$, by duality \eqref{eq:Lpremark} then also holds with $\|\cdot\|_{L^\infty}$ replaced by $\|\cdot\|_{L^1}$. By interpolation, we thus obtain enhanced dissipation in $L^p$ for any $p \in [1,\infty]$ with a decay rate that is uniform in $p$. This should be contrasted with hypocoercivity based proofs of enhanced dissipation, which don't clearly work in spaces that are not $L^2$ based. 
\end{remark}

\begin{remark}\label{rem:hypoelliptic}
    Theorem~\ref{thm:T2general} is also true for the hypoelliptic version of \eqref{eq:ADEbshear} given by 
    $$ \partial_t f + b(y)\partial_x f = \nu \partial_{yy}f $$
    and the proof is exactly same. 
\end{remark}

\begin{remark} \label{rem:Tnu3}
    The fact that $\lim_{\nu \to 0} \nu \lambda(\nu)^{-1} = \lim_{\nu \to 0} \nu T_\nu = 0$ is easy to establish. Indeed, for any $\epsilon > 0$, setting $t = \epsilon \nu^{-1}$ we have
    $$ t \varphi(\sqrt{\nu t}) \ge \epsilon \nu^{-1} \varphi(\sqrt{\epsilon})\ge c_\epsilon \nu^{-1} $$
    for some $c_\epsilon > 0$ depending on $\varphi$. Thus, $\nu T_\nu \le \epsilon$ for all $\nu \le c_\epsilon$.   
    While one always has $T_\nu = o(\nu^{-1})$ as $\nu \to 0$, we will see in the corollaries to Theorem~\ref{thm:T2general} below that $T_\nu$ can often be quantified precisely in concrete examples.
\end{remark}

\begin{remark}
    Assumption~\ref{Ass:T2general}b regarding the structure of $b$ at points where $b'$ vanishes to infinite order is not needed to deduce enhanced dissipation. A straightforward modification of the proof of Theorem~\ref{thm:T2general} shows that even if Assumption~\ref{Ass:T2general}b is removed, $u$ is still dissipation enhancing with some rate function $\lambda$. However, in this case the proof does not show that $\lambda$ is bounded below by its definition via \eqref{eq:Tnu}. This is actually a relatively small concession because regardless of whether or not Assumption~\ref{Ass:T2general}b holds, the existence of a critical point where $b'$ vanishes to infinite order implies that the optimal rate function decays with $\nu$ faster than $\nu^p$ for any $p < 1$.
\end{remark}

\subsubsection*{Applications of Theorem~\ref{thm:T2general}}

We now present a few concrete examples where Theorem~\ref{thm:T2general} can be applied and the associated $T_\nu$ is explicitly computable or at least easy to estimate. As mentioned earlier, they will include the now well-understood case of smooth shears with finitely many critical points that each vanish to finite order, but also lesser studied settings where $b'$ may blow up or vanish to infinite order. 

We begin with the case where $b$ is smooth and $b'$ vanishes to finite order at each critical point. For later reference and to establish some notation, we state this as the precise assumption below.
\begin{assumption}\label{Ass:classical}
 The function $b\colon \T \to \R$ is smooth and possesses finitely many critical points $\{y_1,\ldots, y_M\}$. Moreover, there exists $N \in \N$ such that for each $1 \le i \le M$ we have 
 \begin{equation} \label{eq:finitevanish}
     b^{(k)}(y_i) \neq 0 \quad \text{for some} \quad k \le N+1.
 \end{equation}
\end{assumption}
\noindent 
For $b\colon \T \to \R$ satisfying Assumption~\ref{Ass:classical}, we will always denote by $N$ the minimal natural number such that \eqref{eq:finitevanish} holds.

As stated earlier, enhanced dissipation in the setting of Assumption~\ref{Ass:classical} has been studied in \cite{BCZ17, ABN22, wei2021diffusion} and more recently in \cite{Villringer24}. It was first proven in \cite{BCZ17} using a hypocoercivity argument that such a velocity field $u$ is dissipation enhancing with rate function given by $\lambda(\nu) = \nu^{\frac{N+1}{N+3}}(1+|\log \nu|)^{-2}$. The logarithmic correction was removed in \cite{wei2021diffusion} using a proof based on spectral theory and the resulting rate function was shown to be essentially sharp in \cite{CZDrivas19}. Using Theorem~\ref{thm:T2general}, the sharp enhanced dissipation rate for shears satisfying Assumption~\ref{Ass:classical} is easily recovered:

\begin{corollary} \label{cor:finitevanish}
    Let $b\colon  \T \to \R$ satisfy Assumption~\ref{Ass:classical}. Then, $u(x,y) = (b(y),0)$ is dissipation enhancing with rate function $\lambda(\nu) = \nu^{\frac{N+1}{N+3}}$. 
\end{corollary}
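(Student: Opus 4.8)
\emph{Reduction.} The plan is to read Corollary~\ref{cor:finitevanish} off from Theorem~\ref{thm:T2general}. First note that Assumption~\ref{Ass:classical} implies Assumption~\ref{Ass:T2general}: since $b$ is smooth there are no singular points $\bar y_i$, so part (a) is vacuous, and since $b'$ vanishes to finite order at every critical point, part (b) is vacuous as well. So what remains is to identify the minimal velocity differential $\varphi$ from \eqref{eq:varphi} and then evaluate the timescale $T_\nu$.

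\emph{The differential: $\varphi(h)\gtrsim h^{N+1}$.} For each critical point $y_i$ let $m_i\in\{2,\dots,N+1\}$ be the order of the first nonvanishing derivative of $b$ at $y_i$, so that $m_i=N+1$ for at least one $i$ by the minimality of $N$. Taylor's theorem provides $\rho_0\in(0,1]$ and $c_0>0$, uniform in $i$, such that on each punctured interval $0<|y-y_i|\le\rho_0$ the derivative $b'$ keeps a constant sign and $|b'(y)|\ge c_0|y-y_i|^{m_i-1}$; shrinking $\rho_0$ we may assume the balls $B(y_i,\rho_0)$ are disjoint, and by compactness $|b'|\ge\delta>0$ off $\bigcup_i B(y_i,\rho_0/4)$. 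Taking $h_0=\rho_0/4$ (shrinking the $h_0$ from the construction of $\iota$ if needed), fix $y$, $h\in(0,h_0]$, $\iota=\iota(y)$. Since $b'$ changes sign only at critical points and none lies strictly between $y$ and $y+\iota h$,
\[ |b(y+\iota h)-b(y)| = \int_0^h |b'(y+\iota r)|\,\dr . \]
If the length-$h$ segment from $y$ to $y+\iota h$ misses every $B(y_i,\rho_0/4)$, this is $\ge\delta h\ge\delta h^{N+1}$. Otherwise it lies in a single $B(y_i,\rho_0)$, and writing $\operatorname{dist}(y+\iota r,y_i)=|a+\sigma r|$ with $a=\operatorname{dist}(y,y_i)\ge 0$, $\sigma\in\{-1,1\}$, and $a\ge h$ when $\sigma=-1$ (no critical point strictly between $y$ and $y+\iota h$), the Taylor bound and the super-additivity $(p+q)^m\ge p^m+q^m$ for $p,q\ge 0$, $m\ge 1$, give
\[ |b(y+\iota h)-b(y)| \ge c_0\int_0^h |a+\sigma r|^{m_i-1}\,\dr = \tfrac{c_0}{m_i}\bigl|\,|a+\sigma h|^{m_i}-|a|^{m_i}\,\bigr| \ge \tfrac{c_0}{m_i}h^{m_i} \ge \tfrac{c_0}{N+1}h^{N+1}. \]
Hence $|b(y+\iota(y)h)-b(y)|\ge c\,h^{N+1}$ for all $y$ and $h\in[0,h_0]$, with $c=\min(\delta,c_0/(N+1))$, so $\varphi(h)=c\,h^{N+1}$ is admissible in \eqref{eq:varphi}.

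\emph{The timescale.} With $\varphi(h)=c\,h^{N+1}$ the equation $t\,\varphi(\sqrt{\nu t})=1$ becomes $c\,\nu^{(N+1)/2}t^{(N+3)/2}=1$, whose unique positive root is $t_*=c^{-2/(N+3)}\nu^{-(N+1)/(N+3)}$; since the left side is increasing in $t$, $T_\nu=t_*$ once $\nu$ is small enough that $\sqrt{\nu t_*}\le h_0$ (true as $\nu t_*=c^{-2/(N+3)}\nu^{2/(N+3)}\to 0$). Thus $\lambda(\nu)=T_\nu^{-1}=c^{2/(N+3)}\nu^{(N+1)/(N+3)}$, and Theorem~\ref{thm:T2general} yields $\|f(t)\|_{L^2}\le C e^{-c'T_\nu^{-1}t}\|f_0\|_{L^2}$; absorbing constants, this is dissipation enhancement with rate function $\lambda(\nu)=\nu^{(N+1)/(N+3)}$ (and $\nu/\lambda(\nu)=\nu^{2/(N+3)}\to 0$, as required by Definition~\ref{def:ED}).

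\emph{Where the work is.} The only real step is the lower bound $\varphi(h)\gtrsim h^{N+1}$; the timescale is then a one-line optimization. Within that step the delicate case is a streamline $y$ near a high-order critical point $y_i$, where $|b'|$ nearly vanishes along the relevant segment: there one must combine the convention that $\iota(y)$ points away from $y_i$ with the super-additivity of $t\mapsto t^m$ so that the length-$h$ integral of the small integrand still yields $\gtrsim h^{m_i}$, which is the worst-case behavior that sets the rate. The bound $m_i\le N+1$ is exactly what produces the exponent $h^{N+1}$, hence $\nu^{(N+1)/(N+3)}$.
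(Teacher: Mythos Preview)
Your proof is correct and follows essentially the same approach as the paper's: verify that Assumption~\ref{Ass:T2general} holds vacuously, establish $\varphi(h)=ch^{N+1}$ via Taylor's theorem, solve $T_\nu\varphi(\sqrt{\nu T_\nu})=1$, and invoke Theorem~\ref{thm:T2general}. The paper dispatches the $\varphi$ bound with the single phrase ``it is easy to see from Taylor's theorem,'' whereas you supply the details (the case split near/away from critical points and the super-additivity argument for $\sigma=-1$); this added rigor is fine but does not constitute a different route.
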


\begin{proof}
As $b$ is smooth and does not possess any critical points where $b'$ vanishes to infinite order, Assumption~\ref{Ass:T2general} is automatically satisfied. Moreover, it is easy to see from Taylor's theorem that one may take $\varphi(h) = ch^{N+1}$ for some $c > 0$ sufficiently small. Solving the equation $T_\nu\varphi(\sqrt{\nu T_\nu}) = 1$ shows that $T_\nu = c^{-\frac{2}{N+3}}\nu^{-\frac{N+1}{N+3}}$, and the result then follows by Theorem~\ref{thm:T2general}. 
\end{proof}

We now deviate from the setting of Assumption~\ref{Ass:classical} and
allow for critical points where $b'$ vanishes to infinite order. As
the rate function in Corollary~\ref{cor:finitevanish} degenerates to
the trivial one $\lambda(\nu) = \nu$ in the limit $N \to \infty$, the
enhanced dissipation rate is not immediately clear in this
case. Generally speaking, one expects to have $1/\lambda(\nu) =
o(\nu^{-1})$ as $\nu \to 0$ with $\lambda(\nu) = r(\nu) \nu$ for some
function $r\colon (0,1) \to (0,\infty)$ that depends on the particular rate of vanishing of $b'$ at the critical points and, while blowing up as $\nu \to 0$, satisfies $\lim_{\nu \to 0} \nu^{\epsilon} r(\nu) = 0$ for every $\epsilon > 0$. An interesting question is then if one can provide quantitative estimates for $r$ that depend on the particular structure of $b$ in a natural way. Using Theorem~\ref{thm:T2general}, we can accomplish this for a variety of typical examples. Corollary~\ref{cor:infinitevanish} below is one instance of this.

\begin{corollary} \label{cor:infinitevanish}
Let $b\colon \T \to \R$ be a smooth function with finitely many critical points. Suppose that 
 $\{y \in \T: b^{(k)}(y) = 0 \quad \forall k \in \N\} = \{y_1, \ldots, y_m\} $
and for each $1\le i \le m$ there exist $h_i, p_i > 0$ and $c_i \in \R$ such that 
$$ b(y_i + h) - b(y_i) = c_i \exp\left(-\frac{1}{|h|^{p_i}}\right) \quad \forall h\in [-h_i,h_i]\setminus\{0\}. $$
Then, $u(x,y) = (b(y),0)$ is dissipation enhancing with rate function 
$$ \lambda(\nu) = \nu|\log\nu|^{2/p}, \quad \text{where} \quad p=\max_{1\le i \le m}p_i.$$
\end{corollary}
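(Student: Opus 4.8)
The plan is to deduce the corollary from Theorem~\ref{thm:T2general}. Three things must be verified: (i) that $b$ satisfies Assumption~\ref{Ass:T2general}; (ii) that one may take the minimal velocity differential $\varphi$ in \eqref{eq:varphi} of the form $\varphi(h) = c_0\, e^{-h^{-p}}$ for a small constant $c_0 > 0$, where $p = \max_i p_i$; and (iii) that the resulting timescale $T_\nu$ from \eqref{eq:Tnu} satisfies $T_\nu \asymp \nu^{-1}|\log\nu|^{-2/p}$, so that $\lambda(\nu) = T_\nu^{-1}$ is comparable to $\nu|\log\nu|^{2/p}$ --- the multiplicative constant being harmless, as it is absorbed into the constant $c$ in Definition~\ref{def:ED}.

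For (i): since $b$ is smooth there are no singular points, so Assumption~\ref{Ass:T2general}(a) is vacuous. For (b), the points where $b'$ vanishes to infinite order are exactly $y_1, \dots, y_m$, and near each of them $F(h) := b(y_i + h) - b(y_i) = c_i e^{-|h|^{-p_i}}$. Differentiating gives $|b'(y_i+h)| = |c_i|\, p_i\, |h|^{-p_i-1} e^{-|h|^{-p_i}}$, so the substitution $u = |h|^{-p_i} \to \infty$ writes this as $|c_i| p_i\, u^{1+1/p_i} e^{-u}$, which is eventually monotone decreasing; hence the one-sided limits $\lim_{y\to y_i^\pm}|b'(y)| = 0$ converge monotonically. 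For the $\limsup$ bound, a direct computation gives, for $h > 0$,
\begin{equation*}
\frac{\sqrt{|F(h)|}}{h\,|F'(Ch)|} \;=\; \frac{1}{\sqrt{|c_i|}\,p_i\, C^{-p_i-1}}\; h^{p_i}\, \exp\!\Big(\big(C^{-p_i} - \tfrac12\big)\, h^{-p_i}\Big),
\end{equation*}
with the same identity for $h < 0$ after replacing $h$ by $|h|$. Choosing a single constant $C \ge \max_i 2^{1/p_i}$ (finite, since there are finitely many $i$) makes the exponent nonpositive, so the right-hand side tends to $0$ as $h \to 0$ and the $\limsup$ is finite. Thus Assumption~\ref{Ass:T2general} holds and Theorem~\ref{thm:T2general} applies.

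For (ii): with $\iota(\cdot)$ and $h_0$ as in the discussion preceding Theorem~\ref{thm:T2general}, it suffices to produce a continuous increasing $\varphi$ with $\varphi(0) = 0$ satisfying \eqref{eq:varphi}; the claim is that $\varphi(h) = c_0 e^{-h^{-p}}$ works for $c_0 > 0$ small. Away from all critical points $|b'| \ge \delta > 0$, so $|b(y+\iota(y)h) - b(y)| \ge \delta h$, and near a finite-order critical point Taylor's theorem gives a polynomial lower bound; both dominate $e^{-h^{-p}}$ as $h \to 0$. The delicate case is near an infinite-order critical point $y_i$, where the crude estimate $|b(y+\iota(y)h)-b(y)| \gtrsim h|b'(y)|$ degenerates as $y \to y_i$. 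Parametrizing $y = y_i \pm s$ with $s \ge 0$ and $\iota(y)$ pointing away from $y_i$, one has $b(y+\iota(y)h) - b(y) = c_i\big(G(s+h) - G(s)\big)$ with $G(x) := e^{-x^{-p_i}}$, $G(0) = 0$. The key observation is that $G$ is convex near $0$ with $G(x)/x$ increasing, so the difference quotient $(G(s+h) - G(s))/h$ is monotone increasing in $s$; hence $G(s+h) - G(s) \ge G(h)$, giving $|b(y+\iota(y)h) - b(y)| \ge |c_i| e^{-h^{-p_i}} \ge |c_i| e^{-h^{-p}}$ (using $h \le 1$ and $p_i \le p$) for every $y$ near $y_i$. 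Taking $c_0$ below $\min_i |c_i|$ and below the relevant linear/polynomial constants yields \eqref{eq:varphi}; conversely, evaluating at the critical point $y_{i^*}$ with $p_{i^*} = p$ gives $\varphi(h) \lesssim e^{-h^{-p}}$, so $\varphi(h) \asymp e^{-h^{-p}}$.

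For (iii): the defining relation $T_\nu \varphi(\sqrt{\nu T_\nu}) = 1$ reads $c_0 T_\nu e^{-(\nu T_\nu)^{-p/2}} = 1$, i.e. $\log T_\nu = (\nu T_\nu)^{-p/2} + O(1)$. Inserting the ansatz $T_\nu = A\,\nu^{-1}|\log\nu|^{-2/p}$, noting $\log T_\nu = |\log\nu|(1 + o(1))$ and $(\nu T_\nu)^{-p/2} = A^{-p/2}|\log\nu|$, and using monotonicity of $t \mapsto c_0 t e^{-(\nu t)^{-p/2}}$ to sandwich $T_\nu$, one finds $A \to 1$; hence $T_\nu \sim \nu^{-1}|\log\nu|^{-2/p}$ and $\lambda(\nu) = T_\nu^{-1} \asymp \nu|\log\nu|^{2/p}$. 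Theorem~\ref{thm:T2general} then yields the decay bound \eqref{eq:EDdef} with this rate function, which is the assertion of the corollary. The main obstacle is step (ii): securing the lower bound $\varphi(h) \gtrsim e^{-h^{-p}}$ \emph{uniformly} in the base point $y$ --- in particular for $y$ arbitrarily close to an infinite-order critical point --- for which the convexity inequality $G(s+h) - G(s) \ge G(h)$ is exactly the needed input.
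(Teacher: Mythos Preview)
Your proof is correct and follows essentially the same route as the paper: verify Assumption~\ref{Ass:T2general}, identify $\varphi(h)\asymp e^{-h^{-p}}$, and bound $T_\nu$ via \eqref{eq:Tnu}. Two minor differences worth noting: your step~(ii) supplies the convexity inequality $G(s+h)-G(s)\ge G(h)$ that makes the lower bound on the velocity differential uniform in the base point (the paper simply asserts the choice of $\varphi$ without this detail), and your step~(iii) extracts the sharp asymptotic $T_\nu\sim\nu^{-1}|\log\nu|^{-2/p}$, whereas the paper takes the quicker route of plugging in $t(\nu)=2^{2/p}\nu^{-1}|\log\nu|^{-2/p}$ and checking $t(\nu)\varphi(\sqrt{\nu t(\nu)})\to\infty$, which already gives $T_\nu\le t(\nu)$ and is all that is needed.
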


\begin{proof}
    Let $F_i(h) = b(y_i+h) - b(y_i)$ for $1\le i \le m$. For all $0<|h| \ll 1$ we have 
    $$ F_i'(h) = \frac{\mathrm{sgn}(h) c_i p_i}{|h|^{p_i+1}}\exp\left(-\frac{1}{|h|^{p_i}}\right) \quad \text{and} \quad F_i''(h) = \frac{c_i p_i}{|h|^{p_i+2}}\left(\frac{ p_i}{|h|^{p_i}}- (p_i+1)\mathrm{sgn}(h)\right)\exp\left(-\frac{1}{|h|^{p_i}}\right), $$
    from which the requirements of Assumption~\ref{Ass:T2general}b follow easily. We see then that Assumption~\ref{Ass:T2general} is satisfied with 
    $ \varphi(h) = c\exp(-|h|^{-p})$ for some $c > 0$. A simple computation shows that for $t(\nu) = 2^{2/p}\nu^{-1}|\log \nu|^{-2/p}$ we have
    $$ t(\nu) \varphi\big(\sqrt{\nu t(\nu)}\big) = \frac{c 2^{2/p}}{\sqrt{\nu}|\log \nu|^{2/p}}.$$
    Thus, $\lim_{\nu \to 0}t(\nu) \varphi\big(\sqrt{\nu t(\nu)}\big) = \infty$, which implies that $T_\nu$ as defined by \eqref{eq:Tnu} satisfies $T_\nu \le t(\nu)$ for all $\nu$ sufficiently small. The result now follows by Theorem~\ref{thm:T2general}.
\end{proof}

\begin{remark}
    Of course, the precise estimate on the rate function provided by Theorem~\ref{thm:T2general} is $\lambda(\nu) \ge 1/F_\nu^{-1}(1)$, where $F_\nu(t) = t\varphi(\sqrt{\nu t})$. The bound on $\lambda(\nu)$ that we have stated in Corollary~\ref{cor:infinitevanish} is simply the natural estimate that one obtains from this fact in the sense that 
    $$ \lim_{\nu\to 0} \nu |\log \nu|^{2/q} F_\nu^{-1}(1) = \infty $$
    for any $q < p$.
\end{remark}

We conclude our discussion of Theorem~\ref{thm:T2general} with an application to shear flows which are only H\"{o}lder continuous. In particular, we consider the situation where $b$ is H\"{o}lder continuous, smooth away from finitely many points, and satisfies $|b'(y)| \ge c > 0$ for some constant $c$ everywhere $b'$ is defined. In view of Corollary~\ref{cor:finitevanish}, one expects that in this case $u(x,y) = (b(y),0)$ is dissipation enhancing with rate function $\lambda(\nu) = \nu^{1/3}$, though solutions with initial data localized near a point where $b'$ blows may actually decay on a timescale much faster than $\nu^{-1/3}$. The precise improvement of the local decay rate of solutions in the vicinity of points where $b$ becomes non-smooth will be discussed briefly in the following section (see Remark~\ref{rem:rough}). For now, we show that in this setting, Theorem~\ref{thm:T2general} applies to shear flows that exhibit a large class of derivative singularities and easily gives the expected $\lambda(\nu) = \nu^{1/3}$ rate function. 

\begin{corollary}
    Let $b\colon \T \to \R$ be smooth away from finitely many points $\{\bar{y}_1, \ldots, \bar{y}_N\}$ and such that for some $c > 0$ we have $|b'(y)| \ge c$ wherever $b'$ is defined. Suppose that for each $1 \le i \le N$ there exist $\alpha_i, h_i \in (0,1)$ and $\beta_i, c_i \in \R$ such that 
    $$ b(\bar{y}_i + h) - b(\bar{y}_i) = c_i |h|^{\alpha_i} \log^{\beta_i}(|h|^{-1}) \quad \forall h\in [-h_i,h_i]\setminus \{0\}. $$
    Then, $u(x,y) = (b(y),0)$ is dissipation enhancing with rate function $\lambda(\nu) = \nu^{1/3}$.
\end{corollary}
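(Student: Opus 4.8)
The plan is to deduce the statement from Theorem~\ref{thm:T2general} by checking Assumption~\ref{Ass:T2general} and identifying an admissible profile $\varphi$. Since $|b'(y)| \ge c$ wherever $b'$ is defined, $b$ has \emph{no} critical points, so in the notation of Assumption~\ref{Ass:T2general} we have $M = 0$, part (b) is vacuous, and the only set requiring attention is the finite set $\{\bar y_1, \dots, \bar y_N\}$ of non-smooth points. For part (a), fix $i$ and work on the right branch $h>0$ (the left branch is identical), writing $F(h) = b(\bar y_i + h) - b(\bar y_i) = c_i h^{\alpha_i}\log^{\beta_i}(h^{-1})$ for $0<h\le h_i$; note $c_i \neq 0$ automatically since $|b'|\ge c$, and $F(h)\to 0$ since $\alpha_i>0$, so $b$ is genuinely continuous at $\bar y_i$. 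Differentiating gives $F'(h) = c_i h^{\alpha_i-1}\log^{\beta_i-1}(h^{-1})\big(\alpha_i\log(h^{-1}) - \beta_i\big)$, and since $\alpha_i\in(0,1)$ both $h^{\alpha_i-1}$ and $\log(h^{-1})$ tend to $\infty$ as $h\to 0^+$, so $|F'(h)|\to\infty$; a short computation with the logarithmic derivative of $|F'|$ (the factor $h^{\alpha_i-1}$ dominates) shows the convergence is eventually monotone. The two inequalities in (a) then follow from the elementary asymptotics $\tfrac{h|F'(h)|}{|F(h)|} = \tfrac{|\alpha_i\log(h^{-1}) - \beta_i|}{\log(h^{-1})} \to \alpha_i > 0$ and $\tfrac{h|F''(h)|}{|F'(h)|} \to 1-\alpha_i$, both of which exhibit bounded ratios and hence furnish the required constant $C$. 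This verifies Assumption~\ref{Ass:T2general}.

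Next I would pin down $\varphi$. Choose $h_0$ small enough that for each $y\in\T$ and the associated $\iota(y)\in\{1,-1\}$, no point of $S = \{\bar y_1,\dots,\bar y_N\}$ lies strictly between $y$ and $y+\iota(y)h_0$, exactly as in the discussion preceding Theorem~\ref{thm:T2general}. On the open interval between $y$ and $y+\iota(y)h_0$ the derivative $b'$ is defined, continuous, of a single sign, and bounded below in absolute value by $c$; hence $h\mapsto b(y+\iota(y)h)$ is strictly monotone on $[0,h_0]$ with $\big|\tfrac{d}{dh}b(y+\iota(y)h)\big|\ge c$, and therefore $|b(y+\iota(y)h) - b(y)| \ge ch$ for all $h\in[0,h_0]$ and all $y\in\T$. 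Thus \eqref{eq:varphi} holds with $\varphi(h) = ch$, which is continuous, increasing, and vanishes at $0$.

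Finally, I would solve the defining equation for $T_\nu$: from $T_\nu\,\varphi(\sqrt{\nu T_\nu}) = c\,\nu^{1/2}T_\nu^{3/2} = 1$ we get $T_\nu = c^{-2/3}\nu^{-1/3}$, so $\lambda(\nu) = T_\nu^{-1} = c^{2/3}\nu^{1/3}$. Absorbing the constant $c^{2/3}$ into the exponential rate constant in Definition~\ref{def:ED}, Theorem~\ref{thm:T2general} yields that $u(x,y) = (b(y),0)$ is dissipation enhancing with rate function $\lambda(\nu) = \nu^{1/3}$.

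The only mildly delicate point is part (a) of Assumption~\ref{Ass:T2general} — verifying the two derivative inequalities and the eventual monotonicity of $|F'|$ at each $\bar y_i$ — but these are routine given the explicit profile $c_i|h|^{\alpha_i}\log^{\beta_i}(|h|^{-1})$; everything else is immediate from $|b'|\ge c$. I expect no genuine obstacle: this corollary is essentially the rough-shear analogue of Corollary~\ref{cor:finitevanish} in the degenerate case ``$N=0$'' (so that $\varphi(h)\propto h$), with the derivative blow-ups at the $\bar y_i$ only strengthening, never weakening, the lower bound on the velocity differential across streamlines.
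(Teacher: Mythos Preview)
Your proposal is correct and follows essentially the same approach as the paper: verify Assumption~\ref{Ass:T2general}(a) at each $\bar y_i$ via the explicit derivatives of $F_i(h)=c_i|h|^{\alpha_i}\log^{\beta_i}(|h|^{-1})$, take $\varphi(h)=ch$ from the uniform bound $|b'|\ge c$, solve $T_\nu\varphi(\sqrt{\nu T_\nu})=1$ to get $T_\nu\approx\nu^{-1/3}$, and invoke Theorem~\ref{thm:T2general}. Your use of the limiting ratios $h|F'|/|F|\to\alpha_i$ and $h|F''|/|F'|\to 1-\alpha_i$ is a slightly cleaner packaging of the same computation the paper carries out by expanding $F_i'$ and $F_i''$ in full and identifying their leading-order terms.
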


\begin{proof}
Let $F_i(h) = c_i|h|^{\alpha_i}\log^{\beta_i}(|h|^{-1})$. For all $0<|h| \ll 1$ we have 
$$ F_i'(h) = c_i \mathrm{sgn}(h)|h|^{\alpha_i-1}\log^{\beta_i-1}(|h|^{-1})(\alpha_i \log(|h|^{-1}) - \beta_i) $$
and 
$$ F_i''(h) = c_i |h|^{\alpha_i-2}\log^{\beta_i-2}(|h|^{-1}) \left[\alpha_i(\alpha_i-1)\log^2(|h|^{-1}) - \beta_i(2\alpha_i +1)\log(|h|^{-1}) + \beta_i (\beta_i-1)\right].  $$
The leading order terms in $F_i'(h)$ and $F_i''(h)$ as $h \to 0$ are the ones proportional to $|h|^{\alpha_i-1}\log^{\beta_i}(|h|^{-1})$ and $|h|^{\alpha_i-2}\log^{\beta_i}(|h|^{-1})$, respectively. Using this fact it is easy to see that the requirements of Assumption~\ref{Ass:T2general}a hold. Therefore, $b$ satisfies Assumption~\ref{Ass:T2general} and moreover we may take $\varphi(h) = c h$ for some $c > 0$ sufficiently small.  Solving the resulting equation $T_\nu \varphi(\sqrt{\nu T_\nu}) = cT_\nu^{3/2} \nu^{1/2} = 1$ for $T_\nu$ and applying Theorem~\ref{thm:T2general} completes the proof.
\end{proof}

\subsubsection*{Local enhanced dissipation estimates}

Theorem~\ref{thm:T2general} gives a lower bound on the enhanced
dissipation rate in terms of the \textit{minimal} velocity
differential of $u$ across its horizontal streamlines. One expects the
decay of solutions to be described more precisely by a $y$-dependent
rate function that is determined from the \textit{local} velocity
differential across streamlines. One of the features of our proof of
Theorem~\ref{thm:T2general} is that it is entirely local in $y$, and
as a consequence provides such a description with only a bit of additional effort. In
particular, we define a $y$-dependent timescale $t_\nu \colon \T \to (0,\infty)$ by
\begin{equation}\label{eq:tnuintro}
    t_\nu(y) = \inf\{t \ge 0: t |b(y+\iota(y) \sqrt{\nu t}) - b(y)| \ge 1\},
\end{equation}
which one should view as the natural generalization of $T_\nu$ given by \eqref{eq:Tnu}. Here, $\iota(y) \in \{1,-1\}$ is the choice of sign discussed just before \eqref{eq:varphi}. When $\iota(y)$ is not unique the two possible values of $t_\nu(y)$ obtained by \eqref{eq:tnuintro} are equivalent up to $\nu$-independent constants and we simply define $\iota(y)$ as the choice that results in the smallest value. In the case that $b$ satisfies Assumption~\ref{Ass:T2general} and does not possess any critical points where $b'$ vanishes to infinite order, the proof of Theorem~\ref{thm:T2general} actually shows that there exist $\epsilon,C > 0$ such that for any $K \subseteq \T$ and probability measures $\mu_1,\mu_2$ that have the same $y$-marginal distributions and satisfy $\mu_i(\T \times K) = 1$ we have
\begin{equation} \label{eq:localdecay1}
\|\mu_1\Pt_T - \mu_2 \Pt_T\|_{TV} \le (1-\epsilon)\|\mu_1 - \mu_2\|_{TV}, \quad \text{where} \quad T = C\sup_{y \in K} t_\nu(y).
\end{equation}
Here, $\mu \Pt_t$ denotes the dual action of the Markov semigroup generated by \eqref{eq:SDEintro} on a probability measure $\mu$, and gives the time-$t$ law of the solution of \eqref{eq:SDEintro} with initial condition distributed according to $\mu$ (see the beginning of Section~\ref{sec:probability} for more details). As the $y$-support of $\mu \Pt_t$ remains essentially unchanged on sub-diffusive timescales (up to a small error term; see Remark~\ref{rem:error}), one can hope to iterate \eqref{eq:localdecay1} to obtain a global-in-time estimate describing the precise $y$-dependent decay rate of $\|\Pt_t(x,y,\ccdot) - \Pt_t(\tilde{x},y,\ccdot)\|_{TV}= \|\delta_{(x,y)}\Pt_t - \delta_{(\tilde{x},y)}\Pt_t \|_{TV}$ to zero. In view of \eqref{eq:streamlineheuristic}, such a bound immediately describes the decay of $\|f(t,\cdot,y)\|_{L^\infty(\T)}$ for solutions satisfying \eqref{eq:T2meanfree}. We will present a result of this kind in detail only for shears satisfying Assumption~\ref{Ass:classical}, as in this case the local timescale $t_\nu(y)$ and associated local decay rate $\lambda_\nu(y):=1/t_\nu(y)$ are easy to estimate. 

Let $\{y_1,\ldots, y_M\}$ denote the critical points of a function $b\colon\T \to \R$ that satisfies Assumption~\ref{Ass:classical} and let $k_i \in \N$ be the minimal natural number such that $b^{(k_i+1)}(y_i) \neq 0$. For $1 \le i \le M$, let
\begin{equation}\label{eq:critical_point_rate}
    \lambda_{\nu,i} = \nu^{\frac{k_i+1}{k_i+3}} \quad \text{and} \quad \ell_{\nu,i} = \sqrt{\nu \lambda^{-1}_{\nu,i}}
\end{equation}
be the decay rate and associated diffusive lengthscale corresponding to the critical point $y_i$. It is straightforward to estimate $\lambda_\nu(y) = 1/t_\nu(y)$ and show that 
\begin{equation} \label{eq:tnuAss2}
    \lambda_\nu(y) \approx \bar{\lambda}_\nu(y),
\end{equation}
where
\begin{equation}\label{eq:localrate}
    \bar{\lambda}_\nu(y) = \begin{cases}
        \lambda_{\nu,i} & y \in (y_i - \ell_{\nu,i}, y_i + \ell_{\nu,i}) \text{ for some }1\le i \le M, \\
        \nu^{1/3}|b'(y)|^{2/3} & \text{otherwise}.
        \end{cases}
    \end{equation}
The behavior of $\bar{\lambda}_\nu(y)$ is best understood by observing from Taylor's theorem that for $\ell_{\nu,i} \le |y-y_i| \ll 1$ there holds 
\begin{equation} \label{eq:weightequiv}
   \bar{\lambda}_\nu(y) \approx \nu^{1/3}|y-y_i|^{\frac{2k_i}{3}}.
\end{equation}
Therefore, $\bar{\lambda}_\nu(y)$ describes the manner in which the local decay rate in the vicinity of each critical point grows, as $|y-y_i|$ increases from $\ell_{\nu,i}$ to $\nu$-independent, from the minimal value $\lambda_{\nu,i}$ seen in Corollary~\ref{cor:finitevanish} to the rate $\nu^{1/3}$ for strictly monotone shears.

Using a suitable iteration of \eqref{eq:localdecay1} together with \eqref{eq:tnuAss2} and \eqref{eq:streamlineheuristic} we prove the following result.   

\begin{theorem} \label{thm:T2local}
    Suppose that $b\colon\T \to \R$ satisfies Assumption~\ref{Ass:classical} and let $\lambda_{\mathrm{min}} = \nu^{\frac{N+1}{N+3}}$, where $N \ge 1$ denotes the maximal order of vanishing of $b'$ at the critical points of $b$. Let $\bar{\lambda}_\nu(y)$ be given as in \eqref{eq:localrate} and define the modified local rate 
\begin{equation}\label{eq:localratelog}
       \Lambda_\nu(y) = \frac{\bar{\lambda}_\nu(y)}{1+|\log\nu|^{4N}} + \lambda_{\mathrm{min}}.
   \end{equation}
   There exist constants $C,c > 0$ and a function $R:(0,1] \to (0,\infty)$ with 
   $$ \lim_{\nu \to 0} \frac{R(\nu)}{\nu^p} = 0 \quad \forall p\ge 0 $$
   such that for all $\nu \in (0,1]$, mean-zero $f_0 \in L^\infty(\T^2)$, and $y \in \T$, the solution of \eqref{eq:ADEbshear} satisfies
    \begin{equation} \label{eq:localdecay}
   \left\|f(t,\cdot,y)\right\|_{L^\infty(\T)} \le C\left(e^{-c\Lambda_\nu(y)t}+R(\nu)e^{-c\lambda_{\mathrm{min}}t}\right)\|f_0\|_{L^\infty}
    \end{equation}   
    for all $t \ge 0$.
\end{theorem}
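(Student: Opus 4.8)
The plan is to make rigorous the iteration outlined just before the theorem: reduce the claim to a total-variation estimate via \eqref{eq:streamlineheuristic}, iterate the local one-step contraction \eqref{eq:localdecay1} while controlling the slow growth of the spatial localization of the measures, read off the exponent from the rate estimates \eqref{eq:tnuAss2}, \eqref{eq:localrate}, \eqref{eq:weightequiv}, and fall back on the global contraction from Theorem~\ref{thm:T2general} where the localization cannot be exploited. To begin, since $f_0$ satisfies \eqref{eq:T2meanfree} the streamline average $\langle f \rangle(t,y)$ solves the one-dimensional heat equation with zero initial data, hence $\langle f \rangle \equiv 0$, and by \eqref{eq:streamlineheuristic} it suffices to bound $\sup_{x,\tilde{x}\in\T}\|\delta_{(x,y)}\Pt_t - \delta_{(\tilde{x},y)}\Pt_t\|_{TV}$ by $C\big(e^{-c\Lambda_\nu(y)t} + R(\nu)e^{-c\lambda_{\mathrm{min}}t}\big)$ for some $R$ with $R(\nu) = o(\nu^p)$ for every $p$; note that the two point masses have common $y$-marginal $\delta_y$, which is exactly the hypothesis needed to invoke \eqref{eq:localdecay1}.

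The engine is a single iteration step. Given probability measures $\mu_1,\mu_2$ with a common $y$-marginal supported in an interval $K\subseteq\T$, \eqref{eq:localdecay1} contracts $\|\mu_1\Pt_{T_K} - \mu_2\Pt_{T_K}\|_{TV}$ by a factor $(1-\epsilon)$ over the window $T_K = C\sup_{y'\in K}t_\nu(y')$. Because the $y$-dynamics is an autonomous rescaled Brownian motion, the common $y$-marginal of $\mu_i\Pt_{T_K}$ places mass at most $e^{-cL^2}$ outside the $L\sqrt{\nu T_K}$-enlargement $K^{(L)}$ of $K$; splitting off that excess mass (equal for $i=1,2$, since the marginals agree) reduces to \eqref{eq:localdecay1} on $K^{(L)}$ at the price of an additive error $\lesssim e^{-cL^2}$ that, since total variation is non-increasing along the Markov evolution, is never subsequently amplified. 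Iterating from $K_0 = \{y\}$ produces increasing times $0 = \tau_0 < \tau_1 < \cdots$ and nested intervals $K_0 \subseteq K_1 \subseteq \cdots$ with $\tau_{j+1} - \tau_j = C\sup_{K_{j+1}}t_\nu$ and $K_{j+1} = K_j^{(L)}$ relative to the increment $\tau_{j+1}-\tau_j$, and yields
$$
\sup_{x,\tilde{x}}\big\|\delta_{(x,y)}\Pt_{\tau_J} - \delta_{(\tilde{x},y)}\Pt_{\tau_J}\big\|_{TV} \ \lesssim\ (1-\epsilon)^J + \epsilon^{-1}e^{-cL^2},
$$
and one takes $L = L_\nu := |\log\nu|^2$ so that $e^{-cL_\nu^2}$ is $o(\nu^p)$ for all $p$.

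The substance of the proof is the rate analysis. Using \eqref{eq:critical_point_rate}, \eqref{eq:tnuAss2}, \eqref{eq:localrate}, \eqref{eq:weightequiv}, and the identity $\ell_{\nu,i} = \nu^{1/(k_i+3)}$, one verifies that the iteration is ``scale-stable'': while the edge of $K_j$ nearest a critical point $y_i$ sits at distance $d \gtrsim |\log\nu|^{O(1)}\ell_{\nu,i}$, a single step enlarges $K_j$ by only $\approx L_\nu\nu^{1/3}d^{-k_i/3} \lesssim d$, so $\sup_{K_{j+1}}t_\nu$ stays comparable to $1/\bar\lambda_\nu(y)$, while in the boundary layer of width $\lesssim |\log\nu|^{O(N)}\ell_{\nu,i}$ around each critical point the best achievable rate degrades to $\lambda_{\nu,i}$, which is $\bar\lambda_\nu(y)$ deflated by no more than $|\log\nu|^{4N}$ -- this is the source of the correction in \eqref{eq:localratelog}. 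Consequently $\tau_j \lesssim j/\Lambda_\nu(y)$, so $(1-\epsilon)^J \lesssim e^{-c\Lambda_\nu(y)\tau_J}$, for as long as $K_J$ has not yet reached a streamline on which the rate drops below $\Lambda_\nu(y)$; combining this with the displayed iteration bound gives the term $e^{-c\Lambda_\nu(y)t}$ (up to the constant error $\epsilon^{-1}e^{-cL_\nu^2}$, which is $\lesssim R(\nu)e^{-c\lambda_{\mathrm{min}}t}$ on the relevant time interval provided $\Lambda_\nu(y)$ is large compared to $\lambda_{\mathrm{min}}$). For the remaining streamlines, where $\Lambda_\nu(y)$ is only a bounded multiple of $\lambda_{\mathrm{min}}$, one discards the iteration and uses instead the global contraction of Theorem~\ref{thm:T2general}, $\sup_{x,\tilde{x}}\|\delta_{(x,y)}\Pt_s - \delta_{(\tilde{x},y)}\Pt_s\|_{TV}\lesssim e^{-c'\lambda_{\mathrm{min}}s}$, which already controls $\|f(t,\cdot,y)\|_{L^\infty}$ by $e^{-c'\lambda_{\mathrm{min}}t} \le e^{-c\Lambda_\nu(y)t}$ once $c$ is fixed small relative to $c'$. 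For large $t$, when the localization is lost and the iteration bound has a constant floor, the global contraction is invoked over the tail window seeded with the iteration bound at an intermediate time; the super-polynomially small decay accumulated up to that time, multiplied by the compensating exponential factor, is absorbed into $R(\nu)$. Assembling the two regimes and feeding the result back through \eqref{eq:streamlineheuristic} produces \eqref{eq:localdecay}.

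The main obstacle is the bookkeeping in the last step: keeping track of how the support grows relative to the critical lengthscales $\ell_{\nu,i}$, ensuring that the accumulated truncation error from the iteration is always dominated by the term $R(\nu)e^{-c\lambda_{\mathrm{min}}t}$ on the time interval where it is used, and handing off cleanly to the global estimate -- all while producing an exponent $\Lambda_\nu(y)$ that is genuinely uniform in $y$. The power $|\log\nu|^{4N}$ in \eqref{eq:localratelog} is a deliberately generous bound for the cumulative $\sqrt{|\log\nu|}$-type losses coming from the enlargements $K_j^{(L_\nu)}$ together with the maximal vanishing order $N$; it is exactly what forces the theorem to be stated with $\Lambda_\nu$ rather than $\bar\lambda_\nu$, and sharpening it would require refining the crude one-step estimate \eqref{eq:localdecay1}.
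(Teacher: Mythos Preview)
Your outline is correct in spirit and hits all the necessary pieces: the reduction through \eqref{eq:streamlineheuristic}, the iteration of the one-step contraction with heat-kernel control on the leaking mass, the choice $L\sim|\log\nu|^2$ to make the error super-polynomially small, the case split according to whether $\Lambda_\nu(y)$ is comparable to $\lambda_{\mathrm{min}}$, and the hand-off to Theorem~\ref{thm:T2general}. The paper does essentially this, but with one organizational simplification that removes most of the ``bookkeeping'' you flag as the main obstacle.

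Instead of iterating on \emph{growing} intervals $K_0\subseteq K_1\subseteq\cdots$ and tracking how $\sup_{K_j}t_\nu$ drifts, the paper fixes once and for all the interval $I(y)=(y-|\log\nu|^2\ell_\nu(y),\,y+|\log\nu|^2\ell_\nu(y))$ and iterates on this single set for $n_0\approx|\log\nu|^2$ steps. A short lemma (their Lemma~\ref{lem:timescalerelation}) shows that for $y$ outside $\cup_i I(y_i)$ one has $\sup_{y'\in I(y)}t_\nu(y')\le C\,t_\nu(y)$, so the contraction time never degrades; and since the total elapsed time is $n_0 t_\nu^*\lesssim|\log\nu|^2t_\nu(y)$, the $y$-marginal has diffused only $\sim|\log\nu|\ell_\nu(y)$, well inside $I(y)$. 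This replaces your inductive ``scale-stable'' argument by a single static estimate. The paper also reduces the full statement to the one-shot bound
\[
\sup_{x,\tilde x}\|\delta_{(x,y)}\Pt_T-\delta_{(\tilde x,y)}\Pt_T\|_{TV}\le C\nu^{c|\log\nu|}\quad\text{for some }T\le C(1+|\log\nu|^{4N})t_\nu(y),
\]
and then invokes Corollary~\ref{cor:finitevanish} globally; this avoids having to thread the exponential through the iteration. Your growing-interval scheme would ultimately reproduce the same estimates, but the fixed-interval device is what makes the rate analysis a two-line lemma rather than the running induction you describe.
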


\begin{remark} \label{rem:error}
    The logarithmic loss in $\Lambda_\nu(y)$ and the error term of size $R(\nu)$ that decays with the minimal rate $\lambda_{\mathrm{min}}$ appear necessary. Together, they describe how mass of the measure $\Pt_t(x,y,\ccdot)$ can slowly diffusive away from the streamline at height $y$ into regions where the local decay rate is slightly worse.
\end{remark}

\begin{remark}
    The bound \eqref{eq:localdecay} results from one particular choice of how to balance the loss in the local rate (compared to $\bar{\lambda}_\nu$) with the error term $R(\nu)$. Other choices are possible, which result in different local decay estimates. For instance, if one is willing to introduce an arbitrarily small polynomial-in-$\nu$ loss in $\Lambda_\nu$, then the corresponding error term can be shown to satisfy $\lim_{\nu \to 0} \exp(\nu^{-p}) R(\nu) = 0$ for some $p  > 0$.
\end{remark}

\begin{remark} \label{rem:rough}
    While we restricted Theorem~\ref{thm:T2local} to the setting of Assumption~\ref{Ass:classical}, the proof of Theorem~\ref{thm:T2general} also gives local estimates that describe an improved rate of enhanced dissipation (compared to the $\nu^{1/3}$ rate for shears with $|b'(y)| \approx 1$) nearby points where $b'$ becomes singular. For example, if $b(\bar{y}+h) - b(\bar{y}) \propto |h|^\alpha$ for $\alpha \in (0,1)$ near some point $\bar{y}$, then it can be shown that there exists $c > 0$ such that for all $|y-\bar{y}| \ll 1$ and $0 < \nu \ll 1$ we have 
    $$ \frac{1}{t_\nu(y)} \ge c \lambda_\nu(y), \quad \text{where} \quad \lambda_\nu(y) = 
    \begin{cases} 
    \nu^{\frac{\alpha}{\alpha+2}} & \quad \text{if} \quad |y-\bar{y}| \le \nu^{\frac{1}{\alpha+2}}, \\ 
    \nu^{1/3}|y-\bar{y}|^{\frac{2(\alpha-1)}{3}} & \quad \text{if} \quad \nu^{\frac{1}{\alpha+2}} < |y-\bar{y}| \ll 1
    \end{cases} 
    $$
    and $t_\nu(y)$ is as defined in \eqref{eq:tnuintro}. Since $\nu^{\frac{\alpha}{\alpha+2}} \gg \nu^{1/3}$ for small $\nu$, $\lambda_\nu(y)$ quantifies how the local decay rate improves in the vicinity of $\bar{y}$ and statements analogous to Theorem~\ref{thm:T2local} can be shown. As mentioned just after \eqref{eq:introdecay}, that roughness can shorten the enhanced dissipation timescale has been observed previously in the literature. For example, the global decay estimate $\|f(t)\|_{L^2} \le C e^{-c\nu^\frac{\alpha}{\alpha+2}t}\|f_0\|_{L^2}$ has been proven for shears which are sharply $\alpha$-H\"{o}lder continuous at each point in space \cite{wei2021diffusion,CZCW20}.        
\end{remark}

\subsection{Results for radial shears on $\R^2$ or the unit disk}

Let $D$ denote either $\R^2$ or the unit disk $\mathbb{D} = \{\mathbf{x} \in \R^2: |\mathbf{x}| \le 1\}$. If $D = \R^2$, then we write $I = [0,\infty)$ and in the case of the unit disk we let $I = [0,1]$. Here we state our results for radially symmetric shear flows $u:D \to \R^2$ given in polar coordinates $(r,\theta) \in I \times [0,2\pi)$ by 
\begin{equation} \label{eq:R2shear}
    u(r,\theta) = r b(r) \hat{e}_\theta,
\end{equation}
where $b\colon I \to \R$. In this case, \eqref{eq:ADE} becomes 
\begin{equation} \label{eq:ADEpolarshear}
   \begin{cases}
    \partial_t f + b(r)\partial_\theta f = \nu \Delta f, \\ 
    f|_{t=0} = f_0,
    \end{cases}
\end{equation}
where 
$$ \Delta = \partial_{rr} + \frac{1}{r}\partial_r + \frac{1}{r^2}\partial_{\theta \theta} $$
is the polar Laplacian. In the case that $D$ is the disk, \eqref{eq:ADEpolarshear} is supplemented with no-flux boundary conditions. The streamline average 
\begin{equation}
    \langle f \rangle(t,r):=\frac{1}{2\pi}\int_0^{2\pi} f(t,r,\theta)\dee \theta
\end{equation}
simply solves
\begin{equation} \label{eq:radialheat}
        \partial_t \langle f \rangle(t,r) = \nu \Big( \partial_{rr} + \frac{1}{r}\partial_r \Big)\langle f \rangle(t,r)
\end{equation}
and hence decays to zero at best only on the slow $\nu^{-1}$ diffusive timescale. Therefore, just as in \eqref{eq:T2meanfree}, we will always assume without loss of generality that 
\begin{equation} \label{eq:R2meanfree}
\int_0^{2\pi}f_0(r, \theta)\dee \theta = 0 \quad \forall r \in I, 
\end{equation}
which is preserved by the evolution of \eqref{eq:radialheat}.

We will consider a general setting where $b$ is smooth and possesses finitely many critical points. 
The precise conditions we impose are stated as follows.

    \begin{assumption}\label{ass:R2_general}
        The function $b\colon I\setminus\{0\} \to \mathbb{R}$ is smooth, possesses at most finitely many critical points $\{r_1,\ldots, r_M\} \subseteq \mathrm{int}(I)$, and there exists $N \in \N$ such that for each $1 \le i \le M$ we have 
        $$ b^{(k)}(r_i) \neq 0 \quad \text{for some} \quad k \le N+1.$$
        Moreover:
        
        \begin{itemize}
            \item[(a)] The derivative $b'\colon I \to \R$ is continuous and there exists $q \ge 0$ such that 
            \begin{equation} \label{eq:qdef}
            \lim_{r \to 0}\frac{b'(r)}{r^{q}} \in \R\setminus \{0\}. 
            \end{equation}
            \item[(b)]  If $D = \R^2$, then there exist constants $c,C > 0$ such that 
        \begin{equation} \label{eq:R2assumption}
        \liminf_{r \to \infty}|b'(r)| \ge c \quad \text{and} \quad \limsup_{r \to \infty}\left|\frac{b''(r)}{b'(r)}\right| \le C.
        \end{equation}
        \end{itemize}

    \end{assumption}

    \begin{remark}
    In the case that $D$ is the unit disk, Assumption~\ref{ass:R2_general} is satisfied by any function
    $b\colon [0,1] \to \R$ that is smooth, possesses only finitely many critical points with $b'$ vanishing to finite order at each of them, and satisfies $\lim_{r \to 1}|b'(r)| \neq 0$. The assumption that the $b'$ does not vanish at the boundary is not essential and made only for convenience. Condition (b) says essentially that if $D = \R^2$, then $b$ grows at least linearly but no faster than exponentially as $r \to \infty$. Typical examples of functions satisfying both (a) and (b) above then include $e^r - 1$, $r^p$, and $r^p e^r$, where $p \ge 1$ is an arbitrary exponent.
    \end{remark}

    \begin{remark}
        It is possible to allow for behavior near $r = 0$ that is more general than what we state in (a), but we do not do this for the sake of simplicity.  
    \end{remark}

    Just like Assumption~\ref{Ass:T2general} made earlier,
    Assumption~\ref{ass:R2_general} implies that there exists $0 < h_0
    \ll 1$ such that for every $r \in I$ there is a choice $\iota(r)
    \in \{1,-1\}$ so that no elements of $\{0,r_1,\ldots,r_M\}$ lie
    strictly between $r$ and $r+\iota(r)h_0$ and $h \mapsto
    |b(r+\iota(r)h) - b(r)|$ is monotone increasing for $h \in
    [0,h_0]$. Just as in the $\T^2$ setting, it is natural then to
    consider the local timescale $t_\nu\colon I \to (0,\infty)$ defined by 
    \begin{equation}\label{eq:tnurintro}
        t_\nu(r) = \inf\{t \ge 0: t|b(r+\iota(r)\sqrt{\nu t}) - b(r)| = 1\}.
    \end{equation}
    Defining $N$ to be the maximal order of vanishing of $b'$ at the critical points $\{r_1,\ldots,r_M\}$ and $n = \max(q,N)$, it is straightforward to use Taylor's theorem and Assumption~\ref{ass:R2_general} to show that there is a constant $C > 0$ such that
    \begin{equation} \label{eq:tnurbound}
    \sup_{r \in I} t_\nu(r) \le C \nu^{-\frac{n+1}{n+3}}.
    \end{equation}
    Our main result in the setting of Assumption~\ref{ass:R2_general} is then stated as follows.
    
    \begin{theorem} \label{thm:RadialGen}
    Let $b\colon I \to \R$ satisfy Assumption~\ref{ass:R2_general},
    $t_\nu\colon  I \to (0,\infty)$ be as defined in
    \eqref{eq:tnurintro}, and $f\colon  [0,\infty) \times I \times [0,2\pi) \to \R$ denote the solution of \eqref{eq:ADEpolarshear}, which is supplemented with no-flux boundary conditions in the case that $D$ is the unit disk. Then, there exist constants $\epsilon, C_0$ such that for any $r > 0$ and $f_0 \in L^\infty(I\times[0,2\pi))$ satisfying \eqref{eq:R2meanfree} we have
    \begin{equation} \label{eq:localdecay2}
\|f(C_0t_\nu(r),r,\ccdot)\|_{L^\infty([0,2\pi))} \le (1-\epsilon)\|f_0\|_{L^\infty}.
    \end{equation}
    In particular, by iterating \eqref{eq:localdecay2} and using \eqref{eq:tnurbound}, there exist $c,C > 0$ such that
\begin{equation} \label{eq:radialglobal}
   \|f(t)\|_{L^\infty} \le C e^{-c \nu^\frac{n+1}{n+3}t}\|f_0\|_{L^\infty}
\end{equation}
for all $t \ge 0$.
    \end{theorem}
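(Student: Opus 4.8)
The plan is to transport the pathwise argument behind Theorem~\ref{thm:T2general} to the radial setting, the structure being identical once the right process is identified. Write the diffusion associated with \eqref{eq:ADEpolarshear} in Cartesian coordinates as $\dee \xbf_t = -b(|\xbf_t|)\xbf_t^\perp \dt + \sqrt{2\nu}\,\dee W_t$ (with normal reflection at $|\xbf_t| = 1$ when $D = \mathbb{D}$), where $\xbf^\perp = (-x_2,x_1)$, so that $f(t) = \E f_0(\xbf_t)$. In polar coordinates the radial part solves $\dee r_t = \tfrac{\nu}{r_t}\dt + \sqrt{2\nu}\,\dee B_t$ (plus reflection at $r=1$) and the angular part solves $\dee \theta_t = -b(r_t)\dt + \tfrac{\sqrt{2\nu}}{r_t}\,\dee \widetilde B_t$, with $B,\widetilde B$ independent Brownian motions. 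Since $f_0$ is $\theta$-mean-free, so is $f(t,\cdot)$ by \eqref{eq:radialheat}, and repeating the computation \eqref{eq:streamlineheuristic} with the $\theta$-average in place of the $x$-average gives
\begin{equation}
\|f(T,r,\ccdot)\|_{L^\infty([0,2\pi))} \le \|f_0\|_{L^\infty}\sup_{\theta,\tilde\theta}\|\Pt_T((r,\theta),\ccdot) - \Pt_T((r,\tilde\theta),\ccdot)\|_{TV}.
\end{equation}
Thus \eqref{eq:localdecay2} follows once we show this two-point total variation distance is at most $1-\epsilon$ at time $T = C_0 t_\nu(r)$, uniformly in $r$ and $\theta,\tilde\theta$.

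To prove the two-point bound I would run the same Girsanov-based reduction used on $\T^2$ (the abstract coupling step of Section~\ref{sec:probability}): it suffices to exhibit, on an event of probability bounded below uniformly in $\nu$, an adapted control $v$ with $\int_0^T v_s^2\,\ds = \mathcal{O}(1)$ that, added to the radial driving noise $B$, steers the controlled process issued from $(r,\theta)$ so that its angular coordinate agrees modulo $2\pi$ at time $T$ with the uncontrolled process issued from $(r,\tilde\theta)$. Concretely, the control pushes $r_t$ to make an excursion of size $\sim \sqrt{\nu T}$ in the direction $\iota(r)$ (the direction in which $h\mapsto |b(r+\iota(r)h)-b(r)|$ is increasing) over a time $\sim T$; this produces an angular drift differential $\int_0^T \big(b(r) - b(r_t)\big)\dt$ of magnitude $\gtrsim T\,|b(r+\iota(r)\sqrt{\nu T}) - b(r)|$, which by the definition \eqref{eq:tnurintro} of $t_\nu(r)$ and the choice $T = C_0 t_\nu(r)$ can be tuned, by varying the amplitude of the excursion, over an interval of length $\gtrsim C_0$; taking $C_0 > 2\pi$ lets us hit any target on the circle via the intermediate value theorem. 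The crucial quantitative point — identical to the heuristic \eqref{eq:VarHeurstic} — is that an excursion of size $\sqrt{\nu T}$ against noise of size $\sqrt{2\nu}$ costs control energy $\int_0^T v_s^2\,\ds \sim 1$, independent of $\nu$, so the associated Girsanov density has moments bounded uniformly in $\nu$ and the coupling succeeds with probability bounded below.

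The genuinely new features compared to $\T^2$, which I expect to be the main obstacle, are the behavior of the radial process near the two ends of $I$: (i) the radial SDE carries the drift $\nu/r_t$ and, on the disk, a reflecting boundary at $r=1$ — near $r=1$ one chooses $\iota(r)$ to point inward and absorbs the reflection into the localization estimate, while the $\nu/r_t$ drift is harmless on the relevant timescales; (ii) near $r=0$ the angular coordinate degenerates and the angular noise coefficient $\sqrt{2\nu}/r_t$ blows up, so for $r$ below the diffusive lengthscale $\sim \nu^{1/(n+3)}$ associated with $t_\nu$ one argues directly, using \eqref{eq:qdef} to control $b$ and its derivatives near the origin and the fact that the process leaves a neighborhood of $0$ quickly; (iii) one must verify the localization step, namely that with probability bounded below $r_t$ remains in a fixed interval on $[0,T]$, which follows because $\sqrt{\nu T} \lesssim \nu^{1/(n+3)} \ll 1$ by \eqref{eq:tnurbound}. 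Finally, \eqref{eq:radialglobal} follows from \eqref{eq:localdecay2} by a standard iteration: since $\|\mu_1 \Pt_t - \mu_2\Pt_t\|_{TV}$ is nonincreasing in $t$ and $\sup_{r} C_0 t_\nu(r) \le T_0 := C_0 C\nu^{-(n+1)/(n+3)}$ by \eqref{eq:tnurbound}, the two-point bound gives $\|f(T_0,\ccdot)\|_{L^\infty(D)} \le (1-\epsilon)\|f_0\|_{L^\infty}$ for every $\theta$-mean-free $f_0$, and applying this over successive time blocks of length $T_0$ (using that $\theta$-mean-freeness and the $L^\infty$ bound are propagated by \eqref{eq:ADEpolarshear}) yields $\|f(t)\|_{L^\infty} \le C e^{-c\nu^{(n+1)/(n+3)}t}\|f_0\|_{L^\infty}$.
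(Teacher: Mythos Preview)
Your overall architecture is right and matches the paper: polar SDE, streamline heuristic reducing \eqref{eq:localdecay2} to a two-point TV bound, Girsanov plus a control of cost $O(1)$ on the noise, and iteration via \eqref{eq:tnurbound} for \eqref{eq:radialglobal}. The gap is in the claim that ``the structure [is] identical once the right process is identified.'' It is not, because the angular equation carries multiplicative noise: with $h_t = \iota(r_0)(\tilde r_t - r_t)$ and $\rho_t = \tilde\theta_t - \theta_t$ one has
\[
\dee \rho_t = -[b(\tilde r_t)-b(r_t)]\,\dt \;-\; \sqrt{2\nu}\,\frac{h_t}{r_t \tilde r_t}\,\dee W_t,
\]
so $\rho_t$ is \emph{not} a deterministic functional of the control and the radial path as it was on $\T^2$. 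Your sketch computes only the drift part $\int_0^T [b(r)-b(r_t)]\,\dt$ and then invokes the intermediate value theorem over the excursion amplitude; but $\rho_T$ is random, and the It\^o term has variance of order $(\ell_\nu/(r_0+\ell_\nu))^4$, which is $O(1)$ precisely near the origin where you defer to a separate ``direct'' argument. The IVT-over-amplitude idea is also delicate because the control must be adapted: you cannot pick the amplitude after seeing $\rho_T$.

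The paper fixes this by (i) introducing a \emph{second} adapted control $U_t$ acting on the angular noise, (ii) running the $\T^2$-style feedback not on $\rho_t$ but on a deterministic surrogate $\eta_t$ solving $\dee\eta_t = -\delta^{-2}[b(\tilde r_t)-b(r_t)]\,\dt$, bringing $h_t$ and $\eta_t$ to zero together, (iii) showing that on a good event $|\rho_{\tau_2}|\le \ell_\nu/r_0$ via a martingale estimate on the It\^o term, and (iv) using $U_t$ in a short final step to erase this residual angular error at cost $O(1)$. This three-phase scheme is the genuinely new ingredient over $\T^2$, and your proposal does not contain it. For the disk, ``absorbing the reflection into the localization estimate'' is also too quick: the paper needs a separate escape-from-the-boundary lemma (a mean-hitting-time PDE estimate) and a two-case split according to whether $r_0$ lies in a diffusive layer near $r=1$, since Girsanov as stated does not cover the local-time term.
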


    \begin{remark}
    In a fashion quite similar to \eqref{eq:localrate} and
    \eqref{eq:tnuAss2}, the local rate $\lambda_\nu(r) = 1/t_\nu(r)$
    associated with $t_\nu \colon I \to (0,\infty)$ can be easily estimated. As in Theorem~\ref{thm:T2local}, it is then possible with a suitable iteration procedure of the bound on measures that gives \eqref{eq:localdecay2} to upgrade to a global-in-time estimate that says $\|f(t,r,\ccdot)\|_{L^\infty([0,2\pi))}$ decays, up to small corrections, at the rate $\lambda_\nu(r)$. 
    \end{remark}

    \begin{remark} \label{rem:vortex}
        The regularity of $b$ assumed in \eqref{eq:qdef} does not play a fundamental role in the proof of Theorem~\ref{thm:RadialGen}. By slightly modifying the proof and treating places where $b'$ blows up in the same way that we do in Theorem~\ref{thm:T2general}, we can also obtain an enhanced dissipation result for velocity fields corresponding to singular vortices. Specifically, if $b\colon [0,1] \to \R$ is given by $b(r) = r^{-\alpha}$ for some $0 < \alpha \le 2$ then we can show that the solution of \eqref{eq:ADEpolarshear} supplemented with no-flux boundary conditions satisfies
 \begin{equation}\label{eq:Singdecay}
     \|f(t)\|_{L^\infty} \le C e^{-c\nu^{1/3}t}\|f_0\|_{L^\infty}
 \end{equation}
 for some constants $c,C > 0$.
\end{remark}

\section{Reduction to a quantitative control problem} \label{sec:probability}

The goal of this section is to carry out the necessary probabilistic arguments to reduce the proofs of our main results to solving suitable stochastic control problems. 

We begin with some notation and terminology. For concreteness, we
suppose that we are in the $\T^2$ setting, but the same notations
extend in the expected way for radially symmetric shears. For purely
technical reasons, it will be convenient to consider
\eqref{eq:SDEintro} first as an SDE on $\R^2$ and then map the
solution back onto $\T^2$. Thus, given a continuous function $b\colon \T \to \R$, we extend it to a periodic function defined on $\R$ and let $(x_t,y_t) \in \R^2$ denote the solution of the SDE
\begin{equation} \label{eq:SDE2}
\begin{cases}
\dee x_t = -b(y_t)\dt + \sqrt{2\nu} \dee W_t, \\ 
\dee y_t = \sqrt{2\nu} \dee B_t,
\end{cases}
\end{equation}
where $W_t$ and $B_t$ are independent Brownian motions on $\R$ defined on the filtered probability space $(\Omega, \mathcal{F}, \mathcal{F}_t, \P)$. Here, $\{\mathcal{F}_t\}_{t \ge 0}$ is the natural filtration generated by $(W_t,B_t)$. For any initial condition $(x_t,y_t)|_{t= 0 } = (x,y) \in \R^2$, the SDE \eqref{eq:SDEintro} has a unique global strong solution $(x_t,y_t)_{t\ge 0}$ and hence we may introduce the corresponding flow map $\xi:\R_+ \times \R^2 \times \Omega \to \R^2$ defined by $\xi(t,x,y,\omega) = (x_t(\omega),y_t(\omega))$. 
The solution $(x_t,y_t)$ gives rise to a Markov process on $\R^2$ with Markov transition kernel $\Qt_t$ defined by
\begin{equation} \label{eq:kerneldef}
    \Qt_t(x,y,A) = \P(\xi(t,x,y,\ccdot) \in A) \qquad (x,y) \in \T^2, \quad A \in \mathcal{B}(\R^2),
\end{equation}
where $A \in \mathcal{B}(\R^2)$ denotes a Borel subset of $\R^2$. Recall that a Markov transition kernel on a measurable space $(Z,\mathcal{A})$ is a mapping $Q:Z \times \mathcal{A} \to [0,1]$ such that $z \mapsto Q(z,A)$ is measurable for fixed $A \in \mathcal{A}$ and $Q(z,\ccdot)$ is a probability measure for every $z \in Z$. The Markov semigroup $\{\Qt_t\}_{t \ge 0}$ associated with \eqref{eq:kerneldef}, which acts on the space $B_b(\R^2)$ of bounded and Borel measurable functions on $\R^2$, is defined by 
\begin{equation} \label{eq:semigroupdef}
    \Qt_t g(x,y) = \int_{\R^2}g(x',y')\Qt_t(x,y,\dee x',\dee y').
\end{equation}
We also have the equivalent formula 
\begin{equation}
    \Qt_t g(x,y) = \E g(\xi(t,x,y,\ccdot):= \int_\Omega g(\xi(t,x,y,\omega))\P(\dee \omega) \quad \forall g \in B_b(\R^2).
\end{equation}
It will be useful at times to consider the dual semigroup on measures, which acts on a Borel probability measure $\mu$ to produce a new probability measure $\mu \Qt_t$ defined by
\begin{equation} \label{eq:dualsemigroup}
    \mu \Qt_t(A) = \int_{\R^2} \Qt_t(x,y,A) \mu(\dx,\dy) \quad \forall A \in \mathcal{B}(\R^2).
\end{equation}
By mapping the solution $(x_t,y_t)$ of \eqref{eq:SDE2} back to $\T^2$, the transition kernel $\Qt_t$ induces a new Markov transition kernel $\Pt_t$ on $\T^2 \cong [0,1)^2$ defined by
\begin{equation} \label{eq:periodizedtransition}
    \Pt_t(x,y,A) = \P(\pi(\xi(t,x,y,\ccdot)) \in A) \quad (x,y) \in \T^2, \quad A \in \mathcal{B}(\T^2),
\end{equation}
where $\pi: \R^2 \to \T^2$ denotes the natural projection defined by $\pi(x,y) = (x\mod 1, y\mod 1)$. The Markov semigroup and dual action on measures associated with $\Pt_t$ (now acting on functions and measures on $\T^2$) are defined in the same way as before through \eqref{eq:semigroupdef} and \eqref{eq:dualsemigroup}, respectively. The connection between \eqref{eq:SDE2} and its corresponding PDE is then that for any $f_0 \in B_b(\T^2)$, the unique solution of \eqref{eq:ADEbshear} is given by \begin{equation} \label{eq:probrepT}
f(t,x,y) = \Pt_t f_0(x,y) = \E f_0(\pi(\xi(t,x,y,\ccdot))).
\end{equation}

Phrased in the $\T^2$ setting, the main result of this section reduces the proof of Theorem~\ref{thm:T2general} and \eqref{eq:localdecay1} to showing that one can cause two solutions of \eqref{eq:SDE2} starting from distinct initial conditions on the same streamline to reach each other in finite time (with some positive probability) by perturbing $B_t$ in one of the equations by a suitable control function. The precise statement is as follows.

\begin{proposition} \label{prop:T2control}
    Let $\xi: \R_+ \times \R^2 \times \Omega \to \T^2$ be as defined above and for a function $V_t:[0,\infty) \times \Omega \to \R$ adapted to the filtration $\mathcal{F}_t$, let $\tilde{\xi}:\R_+ \times \R^2 \times \Omega \to \R^2$ denote the solution map associated with the controlled SDE 
    \begin{equation} \label{eq:ControlledSDE}
    \begin{cases}
        \dee \tilde{x}_t = -b(\tilde{y}_t) \dt + \sqrt{2\nu} \dee W_t, \\ 
        \dee \tilde{y}_t = \sqrt{2\nu}(\dee B_t + V_t \dt).
        \end{cases}
    \end{equation}
    Suppose that there are constants $c_0,C_0 > 0$ such that all $x,\tilde{x},y \in \T$ there exist $T_y < \infty$ and $V_t:[0,\infty) \times \Omega \to \R$ such that 
    \begin{equation} \label{eq:controlprop}
        \P \Big(\int_0^\infty |V_t|^2 \dt \le C_0\Big)=1 \quad \text{and} \quad \P(\xi(T_y,x,y,\omega) = \tilde{\xi}(T_y,\tilde{x},y,\omega)) \ge c_0.
    \end{equation}
    Then, there exists $C > 0$ such that for any $K \subseteq \T$ and probability measures $\mu_1,\mu_2$ that have the same $y$-marginal distributions and satisfy $\mu_i(\T \times K) = 1$, we have
\begin{equation} \label{eq:localdecayprop}
\|\mu_1\Pt_{T_*} - \mu_2 \Pt_{T_*}\|_{TV} \le (1-\epsilon)\|\mu_1 - \mu_2\|_{TV}, \quad \text{where} \quad T_* = \sup_{y \in K} T_y.
\end{equation}
In particular, assuming that $T:= \sup_{y \in \T} T_y < \infty$ an
d setting $\lambda = 1/T$, there exist $c,C > 0$ such that 
\begin{equation}\label{eq:globaldecayprop}
        \|\Pt_t f\|_{L^\infty} \le C e^{-c\lambda t}\|f\|_{L^\infty}
    \end{equation}
    for all $t \ge 0$ and $f \in L^\infty(\T^2)$ satisfying \eqref{eq:T2meanfree}.
\end{proposition}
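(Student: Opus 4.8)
The plan is to prove Proposition~\ref{prop:T2control} via a coupling argument. The key object is the random time-$T_y$ when the two controlled processes meet, and the hypothesis \eqref{eq:controlprop} gives us exactly that (with positive probability), at the cost of running one of the processes under a shifted Brownian motion. Girsanov's theorem lets us convert this shift into a likelihood ratio that is controlled because $\int_0^\infty |V_t|^2 \dt$ is bounded by $C_0$ almost surely. So first I would fix $x, \tilde{x}, y \in \T$ and let $V_t$, $T_y$ be as in the hypothesis. On the event $\{\xi(T_y,x,y,\omega) = \tilde{\xi}(T_y,\tilde{x},y,\omega)\}$, after time $T_y$ we can couple the two processes so that they move together (e.g., run the uncontrolled dynamics for both, or equivalently, the synchronous coupling of identical SDEs from identical starting points coincides pathwise). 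This gives a coupling of $\delta_{(x,y)}\Pt_{T_y}$ with the law of $\tilde{\xi}(T_y,\tilde{x},y,\cdot)$ under $\P$.

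**Next** I would remove the control using Girsanov. Define $\dee \Qbar / \dee \P = \exp\big(-\int_0^\infty V_t \dee B_t - \tfrac12\int_0^\infty |V_t|^2\dt\big)$ (restricted to $\mathcal{F}_{T_y}$). Since $\int_0^\infty|V_t|^2\dt \le C_0$ a.s., Novikov's condition holds trivially and this is a genuine probability measure; moreover under $\Qbar$ the process $B_t + \int_0^t V_s\ds$ is a Brownian motion, so the law of $\tilde{\xi}(T_y,\tilde{x},y,\cdot)$ under $\Qbar$ equals $\delta_{(\tilde{x},y)}\Pt_{T_y}$ (after projecting to $\T^2$). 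The standard way to turn an $L^2$ bound on the control into a TV bound is to use that the event $A = \{\xi(T_y,x,y) = \tilde{\xi}(T_y,\tilde{x},y)\}$ has $\P(A) \ge c_0$, and then estimate $\Qbar(A)$ from below: by Cauchy--Schwarz or Jensen applied to the density, $\Qbar(A) \ge $ some explicit positive constant $\epsilon_0 = \epsilon_0(c_0, C_0) > 0$ (for instance, $\Qbar(A) \ge \P(A)^2 / \E[\dee\P/\dee\Qbar \mathbf{1}_A]$, and the denominator is bounded using $\E_{\Qbar}[\dee\P/\dee\Qbar] \cdot (\text{a bound from } C_0)$; alternatively one writes $\P(A) = \E_{\Qbar}[(\dee\P/\dee\Qbar)\mathbf 1_A] \le \E_{\Qbar}[\mathbf 1_A]^{1/2}\E_{\Qbar}[(\dee\P/\dee\Qbar)^2]^{1/2}$ and bounds the second moment of the inverse density by $e^{C_0}$ using the a.s.\ bound on $\int|V|^2$). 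Either way, we get a constant $\epsilon > 0$, depending only on $c_0$ and $C_0$, such that $\delta_{(x,y)}\Pt_{T_y}$ and $\delta_{(\tilde x, y)}\Pt_{T_y}$ admit a coupling that agrees with probability at least $\epsilon$, hence $\|\delta_{(x,y)}\Pt_{T_y} - \delta_{(\tilde x,y)}\Pt_{T_y}\|_{TV} \le 1 - \epsilon$.

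**Then** I would pass from points to the measures $\mu_1, \mu_2$. Since $\mu_1$ and $\mu_2$ have the same $y$-marginal and are supported in $\T \times K$, there is a coupling $(\mathbf{X}_1, \mathbf{X}_2)$ of $(\mu_1, \mu_2)$ whose two components share the same $y$-coordinate $Y \in K$. Conditioning on $Y = y$, we are comparing $\delta_{(x,y)}\Pt_{T_*}$ with $\delta_{(\tilde x, y)}\Pt_{T_*}$ for $T_* = \sup_{y\in K} T_y \ge T_y$; running the per-streamline coupling above up to time $T_y$ and then synchronously up to $T_*$ gives a contraction by $(1-\epsilon)$ on the conditional laws, uniformly in $y \in K$. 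Integrating over the coupling and using the characterization of TV distance via couplings (on the event $\{\mathbf X_1 = \mathbf X_2\}$ the contributions cancel), we obtain \eqref{eq:localdecayprop}. For the global statement \eqref{eq:globaldecayprop}, I take $K = \T$ so $T_* = T$; by \eqref{eq:streamlineheuristic}-type reasoning, $\|\Pt_t f\|_{L^\infty} \le \|f\|_{L^\infty} \sup_{x,\tilde x, y}\|\Pt_t(x,y,\cdot) - \Pt_t(\tilde x, y,\cdot)\|_{TV}$ for mean-free $f$; the semigroup property plus \eqref{eq:localdecayprop} applied at multiples of $T$ gives $\sup\|\Pt_{nT}(x,y,\cdot) - \Pt_{nT}(\tilde x, y, \cdot)\|_{TV} \le (1-\epsilon)^n$, and interpolating for $t \in [nT, (n+1)T)$ with the trivial bound $\le 1$ yields exponential decay at rate $\lambda = 1/T$ up to a constant, i.e.\ \eqref{eq:globaldecayprop}.

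**The main obstacle** is the Girsanov step: making precise that the a.s.\ $L^2$-bound on the control translates into a uniform-in-$(x,\tilde x, y)$ lower bound on $\Qbar(A)$, and in particular checking that the change of measure is valid up to the random time $T_y$ (which requires $T_y$ to be a stopping time, or at least bounded, and the stochastic exponential to be a true martingale — here guaranteed since $\int_0^\infty |V_t|^2\dt \le C_0$ a.s., so Novikov holds without fuss). A secondary technical point is the synchronous coupling after meeting: one must ensure the two processes, once equal, stay equal, which is immediate for the SDE \eqref{eq:SDE2} since pathwise uniqueness holds and both are driven by the same $(W,B)$ from the meeting time onward — but phrasing this cleanly when one process was controlled up to $T_y$ requires a small argument (e.g.\ restart the clock at $T_y$ and drive both by fresh identical Brownian motions). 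These are the places where care is needed; the rest is bookkeeping with couplings and the elementary contraction-iteration for exponential decay.
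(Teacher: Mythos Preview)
Your proposal is correct and follows essentially the same route as the paper: Girsanov with Novikov (guaranteed by the a.s.\ $L^2$ bound on $V$) to obtain the uniform pointwise bound $\|\delta_{(x,y)}\Pt_{T_y} - \delta_{(\tilde x,y)}\Pt_{T_y}\|_{TV}\le 1-\epsilon$, then a disintegration/coupling argument along the common $y$-marginal to upgrade to \eqref{eq:localdecayprop}, and finally iteration for \eqref{eq:globaldecayprop}. The only cosmetic differences are that the paper packages the Girsanov step as a lemma comparing two sub-probability measures $\lambda_1,\lambda_2$ with an $L^2$ bound on $\dee\lambda_2/\dee\lambda_1$ (yielding $\epsilon=\tfrac14 c_0^2 e^{-2C_0}$), whereas you bound $\Qbar(A)$ directly by Cauchy--Schwarz on the inverse density, and the paper phrases the measure step via disintegration rather than coupling language; both pairs of arguments are equivalent in content.
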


The proof of Proposition~\ref{prop:T2control} will be carried out in two main steps. First, in Section~\ref{sec:marginal}, we show that \eqref{eq:localdecayprop} reduces to proving that 
\begin{equation} \label{eq:TVprop}
\|\delta_{(x,y)}\Pt_{T_y} - \delta_{(\tilde{x},y)}\Pt_{T_y}\|_{TV} \le (1-\epsilon),
\end{equation}
where $\|\cdot\|_{TV}$ denotes the total variation distance on probability measures. Next, in Section~\ref{sec:girsanov}, we use a suitable application of Girsanov's theorem to prove that \eqref{eq:TVprop} is implied by the control assumption \eqref{eq:controlprop}. The results of Sections~\ref{sec:marginal} and~\ref{sec:girsanov} will be carried out in a setting general enough to also immediately imply a version of Proposition~\ref{prop:T2control} in the radially symmetric case (see Proposition~\ref{prop:R2control} stated at the end of this section).

\subsection{Convergence on marginals} \label{sec:marginal}

In this section, $P_t$ denotes a Markov transition semigroup on $X\times Y$,
where $X, Y$ are two Polish spaces, and we use the same notation $\mu P_t$ defined in \eqref{eq:dualsemigroup} for the dual action of $P_t$ on Borel probability measures on $X\times Y$. For $(x,y) \in
X\times Y$ define the coordinate projections $\Pi_x (x,y)=x$ and
$\Pi_y(x,y)=y$. By the disintegration theorem, any probability measure $\mu$ on $X\times Y$ can be factored into a marginal probability measure $\bar \mu= \mu \Pi_y^{-1}$ on $Y$
and a collection of conditional measures $\mu^y$ on $X$ for $\bar
\mu$-almost every $y \in Y$ so that $\mu(\dx \times \dy ) = \mu^y(\dx)
\bar \mu(\dy)$ (see e.g. \cite{Chang_Pollard_1997,Simmons_2012}). Here, $\mu \Pi_y^{-1}(A) := \mu(\Pi_y^{-1}(A))$ for a measurable set $A \subseteq Y$.

\begin{proposition}\label{thm:marginal measures}
  Let $\mu_1$ and $\mu_2$ be two probability measures on $X\times Y$
  such that $\mu_1 \Pi_y^{-1}=\mu_2 \Pi_y^{-1}$ and $\mu_i \Pi_y^{-1}(K)=1$ for some $K
  \subseteq Y$. If $T\colon K \rightarrow (0,\infty)$ is a bounded
  function so that
  \begin{align}\label{eq:twoPoints}
    \| \delta_{(x,y)} P_{T(y)} - \delta_{(\tilde{x},y)} P_{T(y)}\|_{TV} \leq 1-\epsilon 
  \end{align}
  for some $\epsilon \in (0,1)$ and all $x,\tilde{x} \in X$ and $y \in Y$,  then
  \begin{align*}
      \|\mu_1 P_{T_*} - \mu_2 P_{T_*}\|_{TV} \leq (1-\epsilon )\|\mu_1
    - \mu_2\|_{TV},
  \end{align*}
  where $T_* = \sup_{y \in K} T(y)$. In particular, by the Hahn-Jordan decomposition, for any signed measure $\mu$ on $X \times Y$ with $\mu(X \times Y) = 0$, $\mu \Pi_y^{-1} = 0$, and $\|\mu\|_{TV(X\times K^c)} = 0$ there holds
  $$ \|\mu \Pt_{T_*}\|_{TV} \le (1-\epsilon)\|\mu\|_{TV}. $$
\end{proposition}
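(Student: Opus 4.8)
The plan is to reduce everything to the two-point estimate \eqref{eq:twoPoints} by disintegrating $\mu_1$ and $\mu_2$ over their common $y$-marginal $\bar\mu := \mu_1\Pi_y^{-1} = \mu_2\Pi_y^{-1}$, and then coupling fiberwise. Write $\mu_i(\dx\,\dy) = \mu_i^y(\dx)\,\bar\mu(\dy)$ with $\mu_i^y$ supported on $X$, so that for $\bar\mu$-a.e.\ $y \in K$ we have two probability measures $\mu_1^y,\mu_2^y$ on $X$. The first step is to express the total variation distance between the pushforwards in terms of the fibers: since $P_{T_*}$ acts on the full space, I would instead run each fiber for its \emph{own} time $T(y) \le T_*$ and then apply $P_{T_* - T(y)}$, using that $P$ is a contraction in TV (the dual action of a Markov kernel never increases total variation distance). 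Concretely, $\|\mu_1 P_{T_*} - \mu_2 P_{T_*}\|_{TV} = \|(\mu_1 P_{T_*-T(\cdot)} - \mu_2 P_{T_*-T(\cdot)})P_{T(\cdot)}\|$ — but this needs care because $T(\cdot)$ is $y$-dependent and $P_t$ mixes the $y$-coordinate. The cleaner route: bound $\|\mu_1 P_{T_*} - \mu_2 P_{T_*}\|_{TV}$ directly by integrating the fiberwise bound, without the detour.

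Here is the cleaner decomposition. For any bounded measurable $g$ on $X\times Y$ with $\|g\|_\infty \le 1$,
\[
\int g\, d(\mu_1 P_{T_*}) - \int g\, d(\mu_2 P_{T_*}) = \int_Y \Big( \int_X (P_{T_*}g)(x,y)\,\mu_1^y(\dx) - \int_X (P_{T_*}g)(x,y)\,\mu_2^y(\dx)\Big)\bar\mu(\dy).
\]
Wait — this is not yet right either, because $(\mu_i P_{T_*})(A) = \int P_{T_*}(x,y,A)\,\mu_i(\dx\,\dy)$, so indeed $\int g\,d(\mu_i P_{T_*}) = \int_{X\times Y}(P_{T_*}g)(x,y)\,\mu_i(\dx\,\dy) = \int_Y\int_X (P_{T_*}g)(x,y)\,\mu_i^y(\dx)\,\bar\mu(\dy)$. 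So the difference equals $\int_Y \big[\int_X (P_{T_*}g)(x,y)\,(\mu_1^y - \mu_2^y)(\dx)\big]\bar\mu(\dy)$. For fixed $y \in K$, I bound the inner bracket. Since $\mu_1^y - \mu_2^y$ is a signed measure on $X$ with total mass zero, by Hahn–Jordan it decomposes as $\rho\big(\mu_1^y - \mu_2^y\big)_+ - \rho\big(\mu_1^y - \mu_2^y\big)_-$ where both positive and negative parts have equal mass, call it $m(y) = \tfrac12\|\mu_1^y - \mu_2^y\|_{TV}$; equivalently write $(\mu_1^y - \mu_2^y) = m(y)(\alpha_y - \beta_y)$ for probability measures $\alpha_y,\beta_y$ on $X$ (when $m(y) > 0$). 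Then
\[
\Big|\int_X (P_{T_*}g)(x,y)\,(\mu_1^y - \mu_2^y)(\dx)\Big| = m(y)\Big|\int_X (P_{T_*}g)\,d\alpha_y - \int_X (P_{T_*}g)\,d\beta_y\Big|.
\]

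The key step is now to estimate this last quantity by $2(1-\epsilon)$. I would factor $P_{T_*} = P_{T_*-T(y)}P_{T(y)}$ and set $h = P_{T_*-T(y)}g$, which still satisfies $\|h\|_\infty \le 1$. Then $\int (P_{T_*}g)\,d\alpha_y - \int(P_{T_*}g)\,d\beta_y = \int (P_{T(y)}h)\,d\alpha_y - \int(P_{T(y)}h)\,d\beta_y = \iint h\,d(\delta_x P_{T(y)})\,\alpha_y(\dx) - \iint h\,d(\delta_{\tilde x}P_{T(y)})\,\beta_y(\dtilde x)$. Averaging the two-point bound \eqref{eq:twoPoints} over $\alpha_y \otimes \beta_y$ — which is legitimate because the bound holds for \emph{all} pairs $(x,\tilde x)$ with the same second coordinate $y$ — gives $\|\alpha_y P_{T(y)}(\cdot,y\text{-fiber construction}) - \beta_y P_{T(y)}\|_{TV} \le 1-\epsilon$ after noting $\delta_x P_{T(y)}$ uses initial $y$-coordinate $y$; hence the bracket is at most $2(1-\epsilon)$. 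Integrating over $y$: the whole difference is bounded by $\int_Y 2(1-\epsilon)m(y)\,\bar\mu(\dy) = (1-\epsilon)\int_Y \|\mu_1^y - \mu_2^y\|_{TV}\,\bar\mu(\dy)$. Finally I invoke the identity $\|\mu_1 - \mu_2\|_{TV} = \int_Y \|\mu_1^y - \mu_2^y\|_{TV}\,\bar\mu(\dy)$, valid precisely because $\mu_1,\mu_2$ share the $y$-marginal $\bar\mu$ (this is a standard disintegration fact: with a common base measure the TV distance is the average of fiber TV distances). Taking the supremum over $g$ yields $\|\mu_1 P_{T_*} - \mu_2 P_{T_*}\|_{TV} \le (1-\epsilon)\|\mu_1 - \mu_2\|_{TV}$.

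For the ``in particular'' clause: given a signed measure $\mu$ with $\mu(X\times Y) = 0$, $\mu\Pi_y^{-1} = 0$, and concentrated on $X\times K$, apply Hahn–Jordan to write $\mu = \tfrac12\|\mu\|_{TV}(\mu_+ - \mu_-)$ with $\mu_\pm$ probability measures; the hypothesis $\mu\Pi_y^{-1}=0$ forces $\mu_+\Pi_y^{-1} = \mu_-\Pi_y^{-1}$, and $\|\mu\|_{TV(X\times K^c)} = 0$ forces both to be concentrated on $X\times K$, so the main statement applies to the pair $(\mu_+,\mu_-)$ and gives $\|\mu P_{T_*}\|_{TV} = \tfrac12\|\mu\|_{TV}\|\mu_+ P_{T_*} - \mu_- P_{T_*}\|_{TV} \le (1-\epsilon)\|\mu\|_{TV}$. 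The main obstacle I anticipate is the measurability bookkeeping — ensuring $y \mapsto \mu_i^y$, $y\mapsto m(y)$, $y\mapsto\alpha_y,\beta_y$ are suitably measurable so the integrals make sense, and handling the $\{y : m(y) = 0\}$ set where the fiber contributes nothing — but this is routine given the disintegration theorem on Polish spaces; the probabilistic content is entirely in reducing to \eqref{eq:twoPoints} via the Markov contraction property and the averaging argument.
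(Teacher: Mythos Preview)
Your approach is correct and follows the same overall architecture as the paper: disintegrate over the common $y$-marginal, apply the two-point bound fiberwise (using $P_{T_*} = P_{T(y)}P_{T_*-T(y)}$ together with TV-contraction of Markov kernels), and then invoke the identity $\|\mu_1 - \mu_2\|_{TV} = \int_Y \|\mu_1^y - \mu_2^y\|_{TV}\,\bar\mu(\dy)$. The paper phrases everything via the coupling characterization of TV---it explicitly builds a coupling of $\mu_1 P_{T_*}$ and $\mu_2 P_{T_*}$ by composing optimal fiber couplings $\Gamma_*^y \in \mathcal{C}(\mu_1^y,\mu_2^y)$ with optimal two-point couplings $\Gamma_{x,\tilde x}^y \in \mathcal{C}(\delta_{(x,y)}P_{T_*},\delta_{(\tilde x,y)}P_{T_*})$---whereas you work with the dual (test function) characterization and Hahn--Jordan on each fiber. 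These are equivalent viewpoints, and neither is materially simpler. The one point worth flagging: the fiberwise TV identity you call ``a standard disintegration fact'' is exactly what the paper isolates as Lemma~\ref{prop:fiber_TV}, and the direction your argument actually needs---namely $\int_Y \|\mu_1^y - \mu_2^y\|_{TV}\,\bar\mu(\dy) \le \|\mu_1 - \mu_2\|_{TV}$---is the less obvious one, requiring either the maximal-coupling construction (as in the paper) or the density formula $\|\mu_1 - \mu_2\|_{TV} = 1 - \int(\rho_1\wedge\rho_2)\,d\nu$ against a common dominating measure.
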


Before we prove Proposition~\ref{thm:marginal measures}, we recall some
basic facts about the total-variation distance between measures and
couplings of measures. Given two probability measures $\mu_1$ and $\mu_2$ on
a Polish space $X$, the set of couplings
$\mathcal{C}(\mu_1,\mu_2)$ is defined by
\begin{align*}
  \mathcal{C}(\mu_1,\mu_2) =\Big\{ \Gamma \in \mathcal{M}_1(
  X\times X) : \Gamma \Pi_i^{-1} =\mu_i\Big\},
\end{align*}
where $\mathcal{M}_1(X)$ is the space of probability measures on $X$ and
$\Pi_i$ is the coordinate projection defined by $\Pi_i(x_1,x_2)=x_i$.
Then, given any $\mu_1, \mu_2 \in \mathcal{M}_1(X)$, we recall that the
total variation distance has the two complementary definitions
\begin{equation} \label{eq:tvdef}
  \|\mu_1-\mu_2\|_{TV} = \sup_{|\phi|_\infty \leq 1} \int_X
  \phi \dee\mu_1 -  \int_X
  \phi \dee\mu_2= \inf_{\Gamma \in \mathcal{C}(\mu_1,\mu_2)} \Gamma \big(
  \{(x_1,x_2) \in X\times X: x_1\neq x_2\}\big),
\end{equation}
where the supremum is taken over bounded, measurable functions $\phi:X \to \R$ and $|\phi|_\infty = \max_{x \in X}|\phi(x)|$. While the coupling $\Gamma$ that realize the minimum is not unique,
there is a canonical choice often referred to as the \textit{maximal
coupling}. We can always find a measure $\mu$ so that $\mu_1$ and $\mu_2$
are both absolutely continuous with respect to $\mu$ with
$\mu_i(\dx)=\rho_i(x)\mu(\dx)$. Then, the maximal coupling is given by
\begin{align*}
  \Gamma_*(\dee x_1\times \dee x_2) & = (\rho_1 \wedge
  \rho_2)(x_1)\mu(\dee x_1)\delta_{x_1}(\dee x_2)\\
  & \quad \quad \quad  + \frac{(\rho_1 - \rho_1 \wedge
  \rho_2)(x_1)\mu(\dee x_1) \times  (\rho_2 - \rho_1 \wedge
  \rho_2)(x_2)\mu(\dee x_2)}{1-\int_X (\rho_1 \wedge \rho_2)(x)\mu(\dx)},
\end{align*}
where $a \wedge b = \min(a,b)$. It is not hard
to see that the measures $ (\rho_1 - \rho_1 \wedge
  \rho_2)(x) \mu(\dx)$ and $(\rho_2 - \rho_1 \wedge
  \rho_2)(x) \mu(\dx)$ are mutually singular, so $\|\mu_1
  -\mu_2\|_{TV} = \Gamma_*\big(
  \{(x_1,x_2) \in X\times X: x_1\neq x_2\}\big) =1-  \int_X (\rho_1 \wedge
  \rho_2)(x) \mu(\dx)$.

\begin{lemma}\label{prop:fiber_TV}
  Let $\mu_1$ and $\mu_2$ be probability measures on $X\times Y$
  with $\mu_1 \Pi_y^{-1}=\mu_2 \Pi_y^{-1}$. Then,
  \begin{align*}
    \|\mu_1 -\mu_2\|_{TV} &=  \int_Y \|\mu_1^y- \mu_2^y\|_{TV} \bar
                            \mu(\dy)\\
    & =\inf_{\Gamma \in \mathcal{C}_y(\mu_1,\mu_2) } \Gamma \big(\{
  (x_1,y_1,x_2,y_2) \in (X\times Y)^2: (x_1,y_1)\neq (x_2,y_2)\}\big),
  \end{align*}
  where $\mu_1^y$, $\mu_2^y$, and $\bar \mu$ are as in the
  disintegration $\mu_i(\dx \times \dy) = \mu^y(\dx) \bar \mu(\dy)$
  discussed at the start of the section and
  \begin{multline*}
    \mathcal{C}_y(\mu_1,\mu_2)= \Big\{\Gamma \in
    \mathcal{C}(\mu_1,\mu_2) : \Gamma(\dee x_1 \times \dee y_1 \times
    \dee x_2\times \dee y_2)\\=  \Gamma^{y_1}(\dee x_1\times \dee x_2) \bar \mu(\dee y_1) \delta_{y_1}(\dee y_2)
    \text{ with }  \Gamma^y \in \mathcal{C}(\mu^y_1, \mu^y_2) \Big\}
  \end{multline*}
\end{lemma}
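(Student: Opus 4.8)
The plan is to reduce everything to Radon--Nikodym densities with respect to a single dominating measure on the product space $X\times Y$, which makes all the fiberwise selections automatically jointly measurable and sidesteps any appeal to abstract measurable-selection theorems. First I would set $m = \tfrac12(\mu_1+\mu_2)$, so that $\mu_i \ll m$ with jointly measurable, bounded densities $g_i = \dee\mu_i/\dee m$. Since $\mu_1$ and $\mu_2$ share the $y$-marginal $\bar\mu$, so does $m$; disintegrating $m(\dx\times\dy) = m^y(\dx)\,\bar\mu(\dy)$ and invoking $\bar\mu$-a.e.\ uniqueness of disintegrations (valid as $X,Y$ are Polish) yields $\mu_i^y(\dx) = g_i(x,y)\,m^y(\dx)$ for $\bar\mu$-a.e.\ $y$. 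With this in hand, two preliminary facts are immediate and will be used repeatedly: the fiberwise Hahn--Jordan identity $\|\mu_1^y - \mu_2^y\|_{TV} = \int_{\{g_1(\cdot,y) > g_2(\cdot,y)\}} (g_1 - g_2)(x,y)\,m^y(\dx)$, and the $\bar\mu$-measurability of $y\mapsto\|\mu_1^y-\mu_2^y\|_{TV}$ (by Fubini applied to the jointly measurable $g_i$).

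For the first equality, the bound $\le$ is the easy direction: for any test function $\phi$ admissible in \eqref{eq:tvdef}, disintegrating gives $\int\phi\,\dee\mu_1 - \int\phi\,\dee\mu_2 = \int_Y\big(\int_X\phi(x,y)\,(\mu_1^y - \mu_2^y)(\dx)\big)\bar\mu(\dy)$, and the inner integral is bounded pointwise in $y$ by $\|\mu_1^y-\mu_2^y\|_{TV}$ because $\phi(\cdot,y)$ is itself an admissible test function on the fiber; taking the supremum over $\phi$ finishes it. For the reverse inequality I would plug in the explicit near-optimal choice $\phi(x,y) = \mathbf{1}_{\{g_1(x,y) > g_2(x,y)\}}$, which is jointly measurable and admissible, and compute directly using $\mu_i^y = g_i(\cdot,y)m^y$ to get $\int\phi\,\dee\mu_1 - \int\phi\,\dee\mu_2 = \int_Y\|\mu_1^y - \mu_2^y\|_{TV}\,\bar\mu(\dy)$.

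For the second equality, the bound $\ge$ uses that every $\Gamma\in\mathcal{C}_y(\mu_1,\mu_2)$ is supported on $\{y_1=y_2\}$, so $\Gamma(\{(x_1,y_1)\neq(x_2,y_2)\}) = \int_Y \Gamma^y(\{x_1\neq x_2\})\,\bar\mu(\dy) \ge \int_Y\|\mu_1^y-\mu_2^y\|_{TV}\,\bar\mu(\dy)$ by the coupling characterization in \eqref{eq:tvdef} applied on each fiber; taking the infimum preserves this. For $\le$ I would construct the fibered maximal coupling: on each fiber let $\Gamma_*^y$ be the maximal coupling of $(\mu_1^y,\mu_2^y)$ given by the explicit formula recalled just before the lemma, with $\rho_i = g_i(\cdot,y)$ and dominating measure $m^y$. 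Joint measurability of the $g_i$ makes $y\mapsto\Gamma_*^y$ a measurable family, so $\Gamma_*(\dx_1\,\dy_1\,\dx_2\,\dy_2) := \Gamma_*^{y_1}(\dx_1\,\dx_2)\,\bar\mu(\dy_1)\,\delta_{y_1}(\dy_2)$ is a well-defined element of $\mathcal{C}_y(\mu_1,\mu_2)$, and the fiberwise optimality $\Gamma_*^y(\{x_1\neq x_2\}) = \|\mu_1^y-\mu_2^y\|_{TV}$ integrates to $\Gamma_*(\{(x_1,y_1)\neq(x_2,y_2)\}) = \int_Y\|\mu_1^y-\mu_2^y\|_{TV}\,\bar\mu(\dy)$, so the infimum is at most this value.

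I expect the only genuinely delicate points to be measure-theoretic bookkeeping rather than hard analysis: namely the a.e.\ consistency $\mu_i^y = g_i(\cdot,y)m^y$ of the disintegrations of $\mu_i$ and $m$ (which relies on $X,Y$ Polish so that regular conditional measures exist and are essentially unique) and the measurable dependence on $y$ of the explicit maximal-coupling formula. Both are handled transparently by the single-dominating-measure formulation, so in the write-up I would record them as standard facts with the references already cited in the excerpt and not belabor the verification.
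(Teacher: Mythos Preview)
Your proposal is correct and follows essentially the same route as the paper: introduce a common dominating measure, identify the fiber measures via densities, prove the inequality $\le$ by disintegrating an arbitrary test function, and close the loop by assembling the fiberwise maximal couplings into a single $\Gamma_{**}\in\mathcal{C}_y(\mu_1,\mu_2)$. The only organizational difference is that you obtain the reverse inequality for the first equality by plugging in the explicit test function $\mathbf{1}_{\{g_1>g_2\}}$, whereas the paper instead computes $\Gamma_{**}(\text{off-diagonal}) = 1-\int(\rho_1\wedge\rho_2)\,\dee\nu$ directly via the identity $(\rho_1^y\wedge\rho_2^y)\bar\rho = \rho_1\wedge\rho_2$ and recognizes this as $\|\mu_1-\mu_2\|_{TV}$; both arrive at the same conclusion, and your version has the mild advantage of being more explicit about the measurability bookkeeping that the paper leaves implicit.
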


\begin{remark}
While the inequality $\|\mu_1 - \mu_2\|_{TV} \le \int_Y \|\mu_1^y - \mu_2^y\|_{TV} \bar{\mu}(\dy)$ is true for other metrics on measures defined through duality (as in the first equality of \eqref{eq:tvdef}, but with $|\cdot|_\infty$ replaced by a different norm), for instance the Wasserstein metric, having equality is unique to the total variation distance. 
\end{remark}

\begin{proof}
  For any bounded and measurable test function $\phi: X\times Y \rightarrow \R$, we have
    \begin{align*}
            \mu_1\phi - \mu_2\phi &= \int_{Y} \int_X \phi(x, y) [\mu_1^y - \mu_2^y](\dx)\overline{\mu}(\dy) \\
            &= \int_Y [\mu_1^y\phi_y - \mu_2^y\phi_y]\,\overline{\mu}(\dy),
    \end{align*}
  where $\phi_y(x) = \phi(x,y)$ is viewed as a function from $X$ into
  $\R$ for fixed $y \in Y$. Thus
  \begin{align*}
    \|\mu_1 -\mu_2\|_{TV}=\sup_{|\phi|_\infty \leq 1}  \int_Y  [\mu_1^y\phi_y - \mu_2^y\phi_y]\,\overline{\mu}(\dy)
    &\leq  \int_Y   \sup_{|\phi_y|_\infty \leq 1} [\mu_1^y\phi_y - \mu_2^y\phi_y]\,\overline{\mu}(\dy)\\
    &=  \int_Y \|\mu_1^y- \mu_2^y\|_{TV} \bar \mu(\dy).
  \end{align*}
  All that remains is to show that there exists $\Gamma \in
  \mathcal{C}_y(\mu_1,\mu_2)$ with $\|\mu_1-\mu_2\|_{TV}=\Gamma\big(
  \{(x_1,x_2) \in X\times X: x_1\neq x_2\}\big)= \int_Y \|\mu_1^y- \mu_2^y\|_{TV} \bar
  \mu(\dy)$. To this end, observe that if $\mu_i(\dx \times \dy) =
  \rho_i(x,y) \nu(\dx \times \dy)$ and $\nu(\dx \times \dy)=\nu^y(\dx)\bar
  \nu(\dy)$, then $\bar{\mu}$ is absolutely continuous with respect so $\bar{\nu}$ so that $\bar{\mu}(\dy) = \bar{\rho}(y) \bar{\nu}(\dy)$ for some function $\bar{\rho}$. We may then write $\rho_i(x,y) =
  \rho_i^y(x)\bar \rho(y)$, so that
  $\mu^y_i(\dx)= \rho^y_i(x) \nu^y(\dx)$. Thus, if we define
  \begin{align*}
    \Gamma^y_*(\dee x_1\times \dee x_2) &= (\rho_1^y \wedge
    \rho_2^y)(x_1)\nu^y(\dee x_1) \delta_{x_1}(\dee x_2) \\
    & \quad \quad + \frac{(\rho_1^y-
    \rho_1^y \wedge \rho_2^y)(x_1)\nu^y(\dee x_1) \times
    (\rho_2^y- \rho_1^y \wedge \rho_2^y)(x_2)\nu^y(\dee x_2)}{1-\int_X (\rho_1^y \wedge \rho_2^y)(x) \nu^y(\dx)},
  \end{align*}
  then $\Gamma^y_* \in \mathcal{C}(\mu_1^y,\mu_2^y)$ with 
  \begin{equation} \label{eq:couple1}
  \Gamma^y_*( \{(x_1,x_2) : x_1 \neq x_2\}) = \| \mu_1^y - \mu_2^y\|_{TV} = 1- \int_X (\rho_1^y \wedge \rho_2^y)(x)\nu^y(\dx).
  \end{equation} 
  We may then define $\Gamma_{**} \in
  \mathcal{C}_y(\mu_1,\mu_2)$ by $\Gamma_{**}(\dee x_1\times \dee y_1\times \dee x_2 \times \dee y_2)= \Gamma^{y_1}_*(dx_1
    \times \dee x_2)\bar \mu(\dee y_1) \delta_{y_1}(\dee y_2)$. Recalling that $\nu^y(\dx) \bar{\mu}(\dy) = \bar{\rho}(y)\nu(\dx \times \dy)$ and employing \eqref{eq:couple1} we have
    \begin{equation} \label{eq:couple2}
    \begin{aligned}
\Gamma_{**}(\{(x_1,y_1,x_2,y_2) : (x_1,y_1)\neq (x_2,y_2)\})&=\int_Y \|\mu_1^y- \mu_2^y\|_{TV} \bar
 \mu(dy) \\ 
 & = 1 - \int_{X \times Y} (\rho_1^y \wedge \rho_2^y)(x)\bar{\rho}(y) \nu(\dx \times \dy).
    \end{aligned}
\end{equation}
Since 
$$ (\rho_1^y \wedge \rho_2^y)(x) \bar{\rho}(y) = (\rho_1^y(x) \bar{\rho}(y)) \wedge (\rho_2^y(x)\bar{\rho}(y)) = (\rho_1 \wedge \rho_2)(x,y), $$
we see then from \eqref{eq:couple2} that 
$$ \Gamma_{**}(\{(x_1,y_1,x_2,y_2) : (x_1,y_1)\neq (x_2,y_2)\}) = 1- \int_{X\times Y} (\rho_1 \wedge \rho_2)(x,y)\nu(\dx \times \dy) = \|\mu_1 - \mu_2\|_{TV},$$
where the second equality follows from the characterization of the total variation distance discussed earlier. This completes the proof.
\end{proof}

With Lemma~\ref{prop:fiber_TV} in hand, we return to the proof
of Proposition~\ref{thm:marginal measures}.
\begin{proof}[Proof of Proposition~\ref{thm:marginal measures}]
     Let $\Gamma^y_* \in \mathcal{C}(\mu_1^y, \mu_2^y)$ be a measure
     that realizes the distance $||\mu_1^y - \mu_2^y||_{TV}$, namely
     \begin{align*}
       ||\mu_1^y - \mu_2^y||_{TV} &= \int_{X\times X} \one_{x\neq \tilde{x}}\,\Gamma^y_*(\dx, \dee \tilde{x})
 = \int_{X\times X} || \delta_{(x, y)} - \delta_{(\tilde{x}, y)} ||_{TV}\,{\Gamma}^y_*(\dee x, \dee \tilde{x}),
     \end{align*}
     where we have used the fact that $ \one_{x\neq x'} =|| \delta_{(x, y)} - \delta_{(x', y)} ||_{TV}$. Now, for $x,\tilde{x} \in X$ and $y \in Y$, let
     $\Gamma^y_{x, \tilde{x}} \in \mathcal{C}(\delta_{(x,
       y)}P_{T_\ast}, \delta_{(\tilde{x}, y)}P_{T_\ast})$
     be a measure that realizes
     $|| \delta_{(x, y)}P_{T_\ast} - \delta_{(\tilde{x},
       y)}P_{T_\ast}) ||_{TV}$ and define
\begin{align*}
  m(\dee x_1\times \dee y_1, \dee x_2\times \dee y_2) = \int_Y\int_{X\times X} \Gamma^y_{x, \tilde{x}}(\dee x_1\times \dee y_1, \dee x_2\times \dee y_2)\Gamma_*^y(\dee x, \dee \tilde{x})\overline{\mu}(\dy). 
\end{align*}
One can easily check that
$m \in \mathcal{C}(\mu_1P_{T_\ast},
\mu_2P_{T_\ast})$.  Since the total variation distance is the
minimum over couplings, we then have that
\begin{align*}
  ||\mu_1P_{T_\ast} - \mu_2P_{T_\ast}||_{TV} &\leq  m\Big(\{(x_1,y_1,x_2,y_2)\in (X\times Y)^2 : (x_1,y_1) \neq (x_2,y_2)\}\Big)\\            &=\int_{K}\int_{X\times X}\Gamma^y_{x, \tilde{x}}\Big(\{(x_1,y_1,x_2,y_2)\in (X\times Y)^2 : (x_1,y_1) \neq (x_2,y_2)\}\Big)\Gamma_*^y(\dee x, \dee \tilde{x})\overline{\mu}(\dy) \\                    &= \int_{K}\int_{X\times X} || \delta_{(x, y)}P_{T_\ast} - \delta_{(\tilde{x}, y)}P_{T_\ast} ||_{TV} \Gamma_*^y(\dee x, \dee \tilde{x})\overline{\mu}(\dy). 
\end{align*}
Since the assumption in \eqref{eq:twoPoints} can be restated as $$\|
\delta_{(x,y)} P_{T(y)} - \delta_{(\tilde{x},y)} P_{T(y)}\|_{TV} \leq (1-\epsilon)  || \delta_{(x, y)} - \delta_{(\tilde{x}, y)} ||_{TV}$$
and $\|\delta_{(x,y)}P_{T_*} - \delta_{(\tilde{x},y)}P_{T_*}\|_{TV} \le \|
\delta_{(x,y)} P_{T(y)} - \delta_{(\tilde{x},y)} P_{T(y)}\|_{TV}$ for any $y \in K$, continuing
the above reasoning and applying Lemma~\ref{prop:fiber_TV} produces
\begin{align*}
  ||\mu_1P_{T_\ast} - \mu_2P_{T_\ast}||_{TV} &\leq (1 - \epsilon)\int_{K}\int_{X
  \times X} || \delta_{(x, y)} - \delta_{(\tilde{x}, y)} ||_{TV} \Gamma_*^y(\dee x, \dee \tilde{x})\overline{\mu}(\dy) \\
&\leq (1 - \epsilon)\int_{K} || \mu_1^y - \mu_2^y||_{TV} \overline{\mu}(\dy) = (1 - \epsilon) ||\mu_1 - \mu_2||_{TV},
\end{align*}
which completes the proof.
\end{proof}

\subsection{Controlling the total variation distance with Girsanov's theorem } \label{sec:girsanov}

Fix $d,m \ge 1$ and let $f\colon \R^d \rightarrow \R^d$ and $g\colon \R^d \rightarrow \R^m
\times \R^d$ be measurable functions such that $X_t$ defined by the It\^{o} stochastic differential
equation
\begin{equation}
  \label{eq:abstract_sde}
  \begin{cases}
    \dee X_t = f(X_t)\dt + g(X_t)\dee W_t,\\
    X_0 =x
  \end{cases}
\end{equation}
has a unique strong solution for every $x \in \R^d$. Here $W_t=(W_t^{(1)}, \dots, W_t^{(m)})$ is an
$m$-dimensional Brownian motion defined on the Wiener space $(C([0,\infty); \R^m), \mathcal{B}(C([0,\infty); \R^m)), \P)$, where $\P$ denotes the standard $m$-dimensional Wiener measure. We write $X_t(x,W)$ for the
solution of \eqref{eq:abstract_sde} with initial condition $x$ and
Wiener process $W_t$. Let $P_t$ be the associated Markov transition
kernel generated by $X_t$. That is, $\delta_x P_t(A) = \P( X_t(x,W) \in A )$
for $x \in \R^d$ and $A \subseteq \R^d$.

Fixing a $T>0$ and $\widetilde{x} \in \R^d$,
 let $u\colon
  [0,T] \times  C([0,T]; \R^m) \times  \rightarrow \R^m$ be a measurable function such that $t \mapsto u(t,W)$
  is adapted to the filtration generated by $W$ and the It\^o equation
  \begin{equation}
  \label{eq:abstract_sde_control}
  \begin{cases}
    \dee \widetilde{X}_t &= f(\widetilde{X}_t)\dt + g(\widetilde{X}_t)\dee W_t+ g(\widetilde{X}_t)u(t,W) \dt,\\
    \widetilde{X}_0&=\widetilde{x}
  \end{cases}
\end{equation}
has well defined strong solution starting from $\widetilde{x}$. We will write
$\widetilde{X}_t(\widetilde{x},W)$ for the solution of
\eqref{eq:abstract_sde_control} starting from $\widetilde{x}$.
\begin{proposition}\label{thm:Girsanov}
In the above setting, assume that there exists a positive, finite
constant $C$ and time $T>0$ so that
$\P(\int_0^T |u(t,W)|^2 \dt \leq C) =1$. Then, if there exists 
  $\mathcal{A} \subset C([0,T]; \R^m)$ such that $\P( W \in
  \mathcal{A})\geq \alpha >0$ and  $X_T(x,W) =
 \widetilde{X}_T(\widetilde{x}, W)$ for all $W \in \mathcal{A}$, one has that
  \begin{align*}
    \| \delta_x P_T - \delta_{\widetilde{x}} P_T \|_{TV} \leq
    1-\tfrac14 \alpha^2 e^{-2C}.
  \end{align*}
\end{proposition}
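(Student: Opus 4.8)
The plan is to use Girsanov's theorem to realize $\delta_{\widetilde x}P_T$ as a reweighting of the law of the controlled process $\widetilde X_T(\widetilde x,W)$, and then to exploit the meeting event $\mathcal A$ to write down an explicit positive measure that is simultaneously dominated by $\delta_x P_T$ and $\delta_{\widetilde x}P_T$. Such a measure has total mass at most $1-\|\delta_x P_T-\delta_{\widetilde x}P_T\|_{TV}$ --- this is the elementary inequality underlying the maximal coupling recalled in Section~\ref{sec:marginal} --- so it suffices to produce one whose mass is at least $\tfrac14\alpha^2 e^{-2C}$.

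First I would set up the change of measure. Since $\P(\int_0^T|u(t,W)|^2\dt\le C)=1$, Novikov's condition holds trivially, so $Z_t:=\exp\!\big(-\int_0^t u(s,W)\,\dee W_s-\tfrac12\int_0^t|u(s,W)|^2\ds\big)$ is a true martingale on $[0,T]$ with $\E Z_T=1$, and $\dee\mathbb Q:=Z_T\,\dee\P$ defines a probability measure under which $\widetilde W_t:=W_t+\int_0^t u(s,W)\,\ds$ is a standard $m$-dimensional Brownian motion. Rewriting \eqref{eq:abstract_sde_control} as $\dee\widetilde X_t=f(\widetilde X_t)\dt+g(\widetilde X_t)\,\dee\widetilde W_t$ with $\widetilde X_0=\widetilde x$ exhibits $\widetilde X_t(\widetilde x,W)$ as a weak solution, under $\mathbb Q$, of \eqref{eq:abstract_sde} started from $\widetilde x$; by the assumed strong existence and pathwise uniqueness for \eqref{eq:abstract_sde} (hence uniqueness in law) its $\mathbb Q$-law is $\delta_{\widetilde x}P_T$, i.e.\ $\delta_{\widetilde x}P_T(B)=\E_\P\big[Z_T\,\one_{\{\widetilde X_T(\widetilde x,W)\in B\}}\big]$ for every Borel $B\subseteq\R^d$.

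Next I would build the common minorant. Fix a threshold $\delta\in(0,1]$, to be chosen, and set $\theta(B):=\delta\,\E_\P\big[\one_{\{Z_T\ge\delta\}}\,\one_{\{W\in\mathcal A\}}\,\one_{\{\widetilde X_T(\widetilde x,W)\in B\}}\big]$. On $\{W\in\mathcal A\}$ one has $\widetilde X_T(\widetilde x,W)=X_T(x,W)$, so, using $\delta\,\one_{\{Z_T\ge\delta\}}\one_{\{W\in\mathcal A\}}\le1$, we get $\theta(B)\le\P(X_T(x,W)\in B)=\delta_x P_T(B)$; and using $\delta\,\one_{\{Z_T\ge\delta\}}\le Z_T$ together with the previous paragraph, $\theta(B)\le\delta_{\widetilde x}P_T(B)$. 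Hence $\|\delta_x P_T-\delta_{\widetilde x}P_T\|_{TV}\le1-\theta(\R^d)=1-\delta\,\P(\mathcal A\cap\{Z_T\ge\delta\})$. To lower-bound the right-hand side, note that $Z_T^{-1}=\exp\!\big(\int_0^T u\,\dee W+\tfrac12\int_0^T|u|^2\big)\le e^{C}\exp\!\big(\int_0^T u\,\dee W-\tfrac12\int_0^T|u|^2\big)$ and that the last exponential is a mean-one martingale, so $\E_\P[Z_T^{-1}]\le e^C$ and therefore $\P(Z_T<\delta)=\P(Z_T^{-1}>\delta^{-1})\le\delta e^{C}$ by Markov. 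Combined with $\P(\mathcal A)\ge\alpha$ this gives $\delta\,\P(\mathcal A\cap\{Z_T\ge\delta\})\ge\delta(\alpha-\delta e^{C})$, and choosing $\delta=\alpha e^{-C}/2\ (\le1)$ makes this equal to $\tfrac14\alpha^2 e^{-C}\ge\tfrac14\alpha^2 e^{-2C}$, which is the claimed estimate.

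The steps are all short; the only place that genuinely requires care is the Girsanov reduction. One must check that $Z$ is a true, not merely local, martingale --- which is immediate here because $\int_0^T|u|^2\dt\le C$ almost surely, so Novikov applies with nothing to prove --- and one must invoke weak uniqueness for \eqref{eq:abstract_sde}, which is precisely why the hypotheses posit a strong solution. After that the argument is bookkeeping: a minorant measure, Markov's inequality, and the moment bound $\E_\P[Z_T^{-1}]\le e^{C}$; the constant $\tfrac14 e^{-2C}$ has room to spare (the same proof in fact gives $\tfrac14 e^{-C}$).
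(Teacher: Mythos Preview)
Your argument is correct. Both proofs hinge on Girsanov's theorem and Novikov's criterion to identify the law of $\widetilde X_T(\widetilde x,W)$ under the reweighted measure with $\delta_{\widetilde x}P_T$, but you diverge from the paper in how the overlap is extracted. The paper restricts both laws to the event $\{W\in\mathcal A\}$ to obtain submeasures $\lambda_1,\lambda_2$ with $\delta_xP_T\ge\lambda_1$ and $\delta_{\widetilde x}P_T\ge\lambda_2$, shows $\lambda_1\sim\lambda_2$ with Radon--Nikodym derivative $M=\E[\mathcal M_T\mid X_T(x,W)=\cdot\,]$, bounds the \emph{second} moment $\int M^2\,\dee\lambda_2\le\E\mathcal M_T^2\le e^{2C}$, and then invokes a separate abstract lemma (Lemma~\ref{prop:TV-two-measures}) that converts such a moment bound into a lower bound on $\int(\rho_1\wedge\rho_2)$ via a level-set optimization. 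You instead build a single common minorant $\theta$ directly, by truncating the Girsanov density on $\mathcal A$ at a level $\delta$, and control its mass using only the \emph{first}-moment bound $\E[Z_T^{-1}]\le e^C$ together with Markov's inequality. Your route is more elementary---it bypasses the auxiliary lemma entirely---and, as you note, actually yields the sharper constant $\tfrac14\alpha^2 e^{-C}$. The paper's approach, on the other hand, packages the level-set optimization into a reusable lemma that applies whenever one has equivalent submeasures with an $L^p$ density bound, which may be convenient in settings where a direct common minorant is less explicit.
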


The proof of this proposition relies on the following lemma, which
is adapted from those in \cite{Mattingly2002,Mattingly2003}.
\begin{lemma}\label{prop:TV-two-measures}
  Let $\mu_1$ and $\mu_2$ be two probability measures on a measure
  space $\mathbf{X}$. Suppose that $\mu_i \geq 
  \lambda_i$ for two measures $\lambda_i(\dx)= \rho_i(x) \lambda(\dx)$ such
  that $\lambda_1 \sim \lambda_2$.\footnote{Here,  $\mu_i \geq 
  \lambda_i$ means that  $\mu_i(A) \geq 
  \lambda_i(A)$ for all measurable sets $A$. Similarly  $\lambda_1
  \sim \lambda_2$, means that the two measures are
  equivalent in the sense that  $\lambda_1(A)>0$ if and only if
  $\lambda_2(A)>0$ for measurable $A$.} Define
$M(x)=\frac{\rho_2(x)}{\rho_1(x)}$. If for some $K,p>1$ there holds
\begin{align} \label{eq:Mbound}
     \int M(x)^{p+1} \lambda_1(dx) =\int M(x)^p \lambda_2(dx) \leq K,
\end{align}
then  \begin{align*}
      \|\mu_1 - \mu_2\|_{TV}   &\leq 1-\Big(1-\frac1p\Big)\Big(\frac{\lambda_1(\mathbf{X})^p}{pK}\Big)^{\frac1{p-1}}<1.
  \end{align*}
\end{lemma}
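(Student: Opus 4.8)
The plan is to reduce the total‑variation estimate to a lower bound on the overlap mass of $\mu_1$ and $\mu_2$, and then to prove that lower bound by a truncation‑and‑optimization argument driven by the moment hypothesis on $M$; this is essentially the strategy of the Mattingly‑type lemmas cited.

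First I would use the probabilist's identity $\|\mu_1-\mu_2\|_{TV}=1-(\mu_1\wedge\mu_2)(\mathbf{X})$, where $\mu_1\wedge\mu_2$ is the largest measure dominated by both $\mu_1$ and $\mu_2$. Since $\mu_i\geq\lambda_i$ as measures we have $\mu_1\wedge\mu_2\geq\lambda_1\wedge\lambda_2$, so it suffices to bound $(\lambda_1\wedge\lambda_2)(\mathbf{X})$ from below. Because $\lambda_1\sim\lambda_2$, the Radon--Nikodym derivative $M^{-1}=\dee\lambda_1/\dee\lambda_2$ is well defined and strictly positive $\lambda_2$‑a.e., and the density of $\lambda_1\wedge\lambda_2$ with respect to $\lambda_2$ is $1\wedge M^{-1}$. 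Hence, writing $O=(\lambda_1\wedge\lambda_2)(\mathbf{X})$, we have the representation $O=\int(1\wedge M^{-1})\,\dee\lambda_2$, and the goal becomes a lower bound on $O$ of the stated power‑law type (in terms of the total mass of the measure carrying the moment bound, namely $\lambda_2(\mathbf{X})$).

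The heart of the argument is an elementary pointwise inequality: for every $t\geq 1$, $s>0$, and $a\in(0,\infty)$,
\[
1\;\leq\; t\,(1\wedge a)\;+\;(ta)^{-s},
\]
which one verifies by splitting into the cases $a\geq 1$, $1/t\leq a<1$, and $a<1/t$. Applying it with $a=M^{-1}$ and $s=p-1$ and integrating against $\lambda_2$ gives, for all $t\geq 1$,
\[
\lambda_2(\mathbf{X})\;\leq\; t\,O\;+\;t^{-(p-1)}\!\int M^{p-1}\,\dee\lambda_2 .
\]
The hypothesis $\int M^{p+1}\,\dee\lambda_1=\int M^{p}\,\dee\lambda_2\leq K$, together with $\lambda_2(\mathbf{X})\leq\mu_2(\mathbf{X})=1$ and H\"older's inequality, yields $\int M^{p-1}\,\dee\lambda_2\leq K^{(p-1)/p}\leq K$ (using $K>1$), so that $\lambda_2(\mathbf{X})\leq tO+Kt^{-(p-1)}$ for all $t\geq1$. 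Minimizing the right‑hand side over $t$ — the minimizer is $t_\ast=((p-1)K/O)^{1/p}$, which exceeds $1$ since $K>1$ and $O\leq 1$, and the residual case $t_\ast<1$ (possible only for $p$ near $1$) is handled directly at $t=1$ — produces $\lambda_2(\mathbf{X})\leq p\,(p-1)^{-(p-1)/p}K^{1/p}O^{(p-1)/p}$, which rearranges to
\[
O\;\geq\;\Big(1-\tfrac1p\Big)\Big(\tfrac{\lambda_2(\mathbf{X})^p}{pK}\Big)^{1/(p-1)} .
\]
Since $K,p>1$ and $\lambda_2(\mathbf{X})\leq 1$, this quantity lies in $(0,1)$, which gives the claimed bound and in particular $\|\mu_1-\mu_2\|_{TV}<1$.

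The steps are individually easy; the only real work is bookkeeping. The main point to get right is the coupled choice of the exponent $s=p-1$ and the truncation level $t$, so that the optimization produces exactly the constants $(1-\tfrac1p)$ and $(pK)^{-1/(p-1)}$; a secondary point is the measure‑theoretic care in the reduction $\mu_1\wedge\mu_2\geq\lambda_1\wedge\lambda_2$ and in the representation $O=\int(1\wedge M^{-1})\,\dee\lambda_2$ — this is precisely where the hypothesis $\lambda_1\sim\lambda_2$ is used. (I would also note that what the optimization naturally delivers is the total mass $\lambda_2(\mathbf{X})$ of the measure whose high moments are controlled; matching this to the mass written in the statement is just a matter of which of $\lambda_1,\lambda_2$ plays that role.)
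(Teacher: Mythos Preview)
Your proof is correct and follows essentially the same truncate-and-optimize strategy as the paper: both reduce to a lower bound on the overlap $\int(\rho_1\wedge\rho_2)\,\dee\lambda=\int(1\wedge M^{-1})\,\dee\lambda_2$, introduce a cutoff level, control the tail using the moment hypothesis on $M$, and optimize the level to obtain the stated constant. The paper packages the truncation via the level set $B_L=\{M\le L\}$ and Markov's inequality $\lambda_2(B_L^c)\le K/L^p$, arriving at $\tfrac1L\lambda_2(\mathbf{X})-\tfrac{K}{L^p}$ and optimizing in $L$; your pointwise inequality $1\le t(1\wedge a)+(ta)^{-s}$ with $s=p-1$ is the same computation in disguise, and both routes deliver $(1-\tfrac1p)(\lambda_2(\mathbf{X})^p/(pK))^{1/(p-1)}$ --- you are right that the natural mass appearing is $\lambda_2(\mathbf{X})$, and indeed the paper's own derivation produces $\lambda_2(\mathbf{X})$ as well (the $\lambda_1(\mathbf{X})$ in the displayed statement is a harmless slip, since in the application the two masses coincide).

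Two small points worth tightening: your route requires $\int M^{p-1}\,\dee\lambda_2\le K$, which you obtain from H\"older and $K>1$, whereas the paper uses $\int M^p\,\dee\lambda_2\le K$ directly via Markov and so avoids this extra step; and your handling of the case $t_\ast<1$ is a bit elliptical --- the clean observation is that $t_\ast<1$ forces $O>(p-1)K$, which already exceeds the claimed lower bound since $K>1\ge\lambda_2(\mathbf{X})/p$.
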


\begin{proof}
Because $\lambda_1$ and $\lambda_2$ are equivalent, we know
that $\lambda(\{x\in \mathbf{X} : M(x)=0\})=0$. Let $[a]^+ = \max(a,0)$ for $a \in \R$ and observe that since $M=M\wedge 1 +[M-1]^+$ we have
\begin{align} \label{eq:Meq}
 (\rho_1\wedge \rho_2)(x) \lambda(\dx)&=
  (M\wedge 1)(x) \rho_1(x) \lambda(\dx)= \rho_2(x) \lambda(\dx)-  \frac{[M-1]^+}{M}(x) \rho_2(x) \lambda(\dx).
\end{align}
Now, for any $L>0$, we define $B_L=\{x \in \mathbf{X} : M(x) \leq L\}$ and notice
that Markov's inequality and \eqref{eq:Mbound} imply that $\lambda_2(B_L^c) \leq \frac{K}{
  L^p}$. Since by \eqref{eq:Meq} we have
\begin{align*}
 \int   (\rho_1 \wedge \rho_2)\lambda(\dx)  &\geq \lambda_2(\mathbf{X})- \int_{B_L^c}\frac{[M-1]^+}{M}(x) \rho_2(x)
\lambda(\dx) -  \int_{ B_L}\frac{[M-1]^+}{M}(x) \rho_2(x)
\lambda(\dx)
\end{align*} 
and moreover $0 \leq \frac{[M-1]^+}{M}(x) \leq \frac{L-1}{L}$ for all $x
\in B_L$ and $\frac{L-1}{L} \leq \frac{[M-1]^+}{M}(x)\leq 1$ for  $x
\in B_L^c$, it follows then that
\begin{equation}
\begin{aligned}
 \int   (\rho_1 \wedge \rho_2)\lambda(\dx)   &\geq  \lambda_2(\mathbf{X}) - \frac{L-1}{L} \lambda_2( B_L) -
   \lambda_2( B_L^c)\geq   \lambda_2(\mathbf{X}) - \frac{L-1}{L} \lambda_2(\mathbf{X}) -
    \frac{K}{ L^p}\\&= \frac1L \lambda_2(\mathbf{X})  -\frac{K}{ L^p}\geq \Big(1-\frac1p\Big)\Big(\frac{\lambda_1(\mathbf{X})^p}{pK}\Big)^{\frac1{p-1}}>0,
  \end{aligned}
  \label{eq:almostThere}
\end{equation}
where the penultimate inequality is obtained by optimizing in $L$.
Recall now that we can always write $\mu_i(\dx)= \tilde{\rho}_i(x)
\tilde{\lambda}(\dx)$ for some measure $\tilde{\lambda}$ and functions
$\tilde{\rho}_i$ and moreover that
\begin{align*}
    \|\mu_1
  - \mu_2\|_{TV}   =  1- \int   (\tilde{\rho}_1 \wedge \tilde{\rho}_2)\tilde{\lambda}(\dx).  
\end{align*}
Since $\mu_i \ge \lambda_i$, it follows that $\|\mu_1 
  - \mu_2\|_{TV}  \leq 1 - \int   ({\rho}_1 \wedge
  {\rho}_2)(x)\lambda(\dx)$, which completes the proof in light of the bound in \eqref{eq:almostThere}.
\end{proof}

We now give the proof of Proposition~\ref{thm:Girsanov} using Lemma~\ref{prop:TV-two-measures}.
\begin{proof}[Proof of Proposition~\ref{thm:Girsanov}]
  We begin by defining, for measurable $A \subseteq \R^d$, 
  \begin{align*}
    \lambda_1(A) = \E \one_{W \in \mathcal{A}} \one_A(X_T(x,W)) \quad\text{and}\quad \lambda_2(A) = \E \one_{W \in \mathcal{A}} \one_A(X_T(\widetilde{x},W)), 
  \end{align*}
  and observe that $\delta_{x}P_T \geq \lambda_1$ and $\delta_{\widetilde{x}}P_T \geq \lambda_2$. Next, define $ \mathcal{M}_t = \exp ( Z_t -\tfrac12\langle Z
  \rangle_t)$, where $Z_t =  \int_0^t u_s(W) \dee W_s$ and $\langle Z
  \rangle_t$ is its quadratic variation. 
By {N}ovikov's theorem (see the proof by Krylov in \cite{Krylov}), we
know that $\mathcal{M}_t$ is a martingale with $\E \mathcal{M}_t=1$
for all $t \in[0,T]$. Since the quadratic variation $\langle Z
  \rangle_T =\int_0^T |u(s,W)|^2\dee s$ is less than $C$ by assumption, a straightforward
  application of It\^o's formula with standard a localization by
  stopping time argument  shows that $\E\mathcal{M}_T^2 \leq e^{2C}$. Recalling that
  $\P(\dee W)$ denotes the original Wiener measure on
  $C([0,T];\R^m)$, we define a  new measure on $C([0,T];\R^m)$ by
  $\mathbf{Q}(\dee W)=\mathcal{M}_T(W)\P(\dee W)$. Since we
  know that $\mathcal{M}_t$ is a martingale, Girsanov's Theorem \cite{Yor}
  implies that $\widetilde{W}_t= W_t + \int_0^t u(s,W)\ds$ is a standard Wiener process under the
  measure $\mathbf{Q}$. This means that 
  $\widetilde{X}_t(\widetilde{x},W) $ under the measure $\mathbf{Q}$ has the same distribution as $X_t(\widetilde{x},W)$
  under the measure $\P$. Hence, for any measurable $A \subseteq \R^d$,
  \begin{align*}
   \lambda_2(A)&= \E \one_{W \in \mathcal{A}} \one_A(X_T(\widetilde{x},W)) 
    = \E\one_{W \in \mathcal{A}}
                 \one_A(\widetilde{X}_T(\widetilde{x},W))
                 \mathcal{M}_t\\&=\E \one_{W \in \mathcal{A}}
    \one_A(X_T(x,W)) \mathcal{M}_t = \int_A  M(y) \lambda_1(\dy)
  \end{align*}
  where  $M(y)=\frac{d\lambda_2}{d\lambda_1}(y) =\E [
  \mathcal{M}_t | X_T(x,W)=y]$. This shows that
  $\lambda_1$ is equivalent to
  $\lambda_2$. Additionally, we have that
  \begin{align*}
    \int M(y)^2 \lambda_2(\dy) \le \E \mathcal{M}_T^2 \leq e^{2 C}
  \end{align*}
  We then apply Lemma~\ref{prop:TV-two-measures} to $\lambda_1$
  and $\lambda_2$ with $p=2$ to obtain the result.
\end{proof}

\subsection{Proof of Proposition~\ref{prop:T2control}}

With Propositions~\ref{thm:marginal measures} and~\ref{thm:Girsanov} in hand, the proof of Proposition~\ref{prop:T2control} is straightforward.

\begin{proof}[Proof of Proposition~\ref{prop:T2control}]
By Proposition~\ref{thm:Girsanov}, the control assumption \eqref{eq:controlprop} implies that there exists $\epsilon > 0$ such that for any $x,\tilde{x}, y \in \T$ we have 
\begin{equation} \label{eq:Pmarginal}
    \|\delta_{(x,y)}\Pt_{T_y}  - \delta_{(\tilde{x},y)}\Pt_{T_y}\|_{TV}\le \|\delta_{(x,y)}\Qt_{T_y}  - \delta_{(\tilde{x},y)}\Qt_{T_y}\|_{TV} \le 1-\epsilon, 
\end{equation}
where the first inequality is true because mapping two measures on $\R^2$ back to $\T^2$ can only reduce their total variation distance. The bound \eqref{eq:localdecayprop} then follows immediately by Proposition~\ref{thm:marginal measures}. As for \eqref{eq:globaldecayprop}, first observe that \eqref{eq:Pmarginal} and \eqref{eq:streamlineheuristic} imply that for $T = \sup_{y \in K} T_y$ we have
\begin{equation} \label{eq:globaldecayprop0}
    \|\Pt_T f\|_{L^\infty}\le (1-\epsilon)\|f\|_{L^\infty}.
\end{equation}
Iterating \eqref{eq:globaldecayprop0} and using that $\|\Pt_t f\|_{L^\infty} \le \|f\|_{L^\infty}$ for all $t \ge 0$ easily implies \eqref{eq:globaldecayprop}, which completes the proof.
\end{proof}

\begin{remark}
From the proof of Proposition~\ref{prop:T2control} given above, one can see that only the result of Section~\ref{sec:girsanov} is needed to obtain \eqref{eq:globaldecayprop} and hence prove Theorem~\ref{thm:T2general}. The result of Proposition~\ref{thm:marginal measures} on the convergence of general measures with the same $y$-marginal is fundamentally needed to obtain the global-in-time, streamline-by-streamline estimate of Theorem~\ref{thm:T2local}.
\end{remark}

In the case of radially symmetric shear flows on $\R^2$, the relevant SDE becomes 
\begin{equation} \label{eq:radialSDE}
\begin{cases}
    \dee \theta_t = -b(r_t)\dt + \frac{\sqrt{2\nu}}{r_t}\dee W_t, \\ 
    \dee r_t = \frac{\nu}{r_t}\dt + \sqrt{2\nu}\dee B_t.
\end{cases}
\end{equation}
As $r_t$ almost surely never hits zero for any $r_0 > 0$, \eqref{eq:radialSDE} can be view as an SDE on $\R^2$ and generates a well-defined flow map $\xi:\R_+ \times \R^2 \times \Omega \to \R^2$ that, as in \eqref{eq:periodizedtransition}, can be used to recover the Markov semigroup $\Pt_t$ for \eqref{eq:radialSDE} viewed as an equation for $(r_t,\theta_t) \in [0,\infty) \times [0,2\pi)$. Therefore, with a proof that is identical to the proof of Proposition~\ref{prop:T2control}, we obtain the following proposition, which will be used as the basis of our proof of Theorems~\ref{thm:RadialGen}.

\begin{proposition}\label{prop:R2control}
     Let $\xi\colon \R_+\times \R^2 \times \Omega \to \R^2$ and
     $\Pt_t$ be as defined above just after \eqref{eq:radialSDE} and
     for adapted controls $U_t,V_t:[0,\infty) \times \Omega \to \R$
     let $\tilde{\xi}\colon  \R_+\times \R^2 \times \Omega \to \R^2$ denote the flow map of the controlled SDE 
    \begin{equation} \label{eq:ControlledSDEradial}
    \begin{cases}
     \dee \tilde{\theta}_t = -b(\tilde{r}_t)\dt + \frac{\sqrt{2\nu}}{\tilde{r}_t}(\dee W_t + U_t \dt), \\ 
    \dee \tilde{r}_t = \frac{\nu}{r_t}\dt + \sqrt{2\nu}(\dee B_t + V_t \dt),
\end{cases}
    \end{equation}
    viewed as an equation on $\R^2$. Suppose that there are constants
    $c_0,C_0 > 0$ such that all $\theta,\tilde{\theta} \in [0,2\pi)$
    and $r \ge 0$ there exist $T_r < \infty$ and $U_t,V_t\colon [0,\infty) \times \Omega \to \R$ such that 
    \begin{equation} \label{eq:controlprop}
        \P \left(\int_0^\infty (|U_t|^2 + |V_t|^2) \dt \le C_0\right)=1 \quad \text{and} \quad \P(\xi(T_r,r,\theta,\omega) = \tilde{\xi}(T_r,r,\tilde{\theta},\omega)) \ge c_0.
    \end{equation}
    Then, there exists $C > 0$ such that for any $K \subseteq [0,\infty)$ and probability measures $\mu_1,\mu_2$ on $[0,\infty) \times [0,2\pi)$ that have the same $r$-marginal distributions and satisfy $\mu_i(K \times [0,2\pi)) = 1$, we have
\begin{equation} \label{eq:localdecayprop}
\|\mu_1\Pt_{T_*} - \mu_2 \Pt_{T_*}\|_{TV} \le (1-\epsilon)\|\mu_1 - \mu_2\|_{TV}, \quad \text{where} \quad T_* = \sup_{r \in K} T_r.
\end{equation}
 In particular, assuming that $T:= \sup_{r \in \T} T_r < \infty$ and setting $\lambda = 1/T$, there exist $c,C > 0$ such that for any $f_0 \in L^\infty([0,\infty)\times [0,2\pi))$ satisfying \eqref{eq:R2meanfree} the solution $\Pt_t f_0$ of \eqref{eq:ADEpolarshear} satisfies
\begin{equation}\label{eq:globaldecayprop}
        \|\Pt_t f\|_{L^\infty} \le C e^{-c\lambda t}\|f\|_{L^\infty}
    \end{equation}
    for all $t \ge 0$.
\end{proposition}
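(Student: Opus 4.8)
The plan is to run the proof of Proposition~\ref{prop:T2control} essentially verbatim; the only genuinely new point is to check that the abstract framework of Section~\ref{sec:girsanov} tolerates the $1/r$ singularities in \eqref{eq:radialSDE}. Concretely, I would recast \eqref{eq:radialSDE} as the $\Rd$-valued It\^o equation \eqref{eq:abstract_sde} with $d=m=2$, state $X_t=(r_t,\theta_t)$, drift $f(r,\theta)=(\nu/r,-b(r))$ and diffusion matrix $g(r,\theta)=\sqrt{2\nu}\,\mathrm{diag}(1,1/r)$, driven by the planar Brownian motion $(B_t,W_t)$. With this identification the controls $(V_t,U_t)$ enter \eqref{eq:abstract_sde_control} exactly through the term $g(\widetilde X_t)u\,\dt$ with $u=(V_t,U_t)$, which reproduces \eqref{eq:ControlledSDEradial} (the added drift being $\sqrt{2\nu}\,V_t$ in the $r$-equation and $\tfrac{\sqrt{2\nu}}{\tilde r_t}U_t$ in the $\theta$-equation).

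To handle the singularity at the origin, I would note that since $r_0>0$ the radial component solves $\dee r_t=\tfrac{\nu}{r_t}\dt+\sqrt{2\nu}\,\dee B_t$, so $r_t=\sqrt{2\nu}\,\rho_t$ for a two-dimensional Bessel process $\rho_t$ and hence almost surely never reaches $0$; on the open set $\{r>0\}$ the coefficients $f,g$ are smooth with locally Lipschitz, locally bounded entries, so \eqref{eq:radialSDE} has a unique strong solution up to the (a.s.\ infinite) hitting time of the origin, and \eqref{eq:ControlledSDEradial} is well posed by the same reasoning — alternatively, Girsanov's theorem identifies the law of $\tilde r_t$ under the tilted measure with that of $r_t$ under $\P$, so equivalence of measures forces $\tilde r_t$ to avoid $0$ almost surely under $\P$ as well. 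With this in hand, Proposition~\ref{thm:Girsanov} applies with $T=T_r$ (extending $U,V$ by zero past $T_r$ so that $\P(\int_0^{T_r}(|U_t|^2+|V_t|^2)\dt\le C_0)=1$) and $\mathcal{A}$ the coincidence event $\{\xi(T_r,r,\theta,\ccdot)=\tilde\xi(T_r,r,\tilde\theta,\ccdot)\}$, which by hypothesis has probability at least $c_0$. This yields $\|\delta_{(r,\theta)}\Qt_{T_r}-\delta_{(r,\tilde\theta)}\Qt_{T_r}\|_{TV}\le 1-\tfrac14 c_0^2 e^{-2C_0}=:1-\epsilon$ for every $r\ge 0$ and $\theta,\tilde\theta$, where $\Qt_t$ is the transition kernel of $X_t$ before the angular coordinate is reduced mod $2\pi$.

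Next, projecting $\theta$ modulo $2\pi$ (the map used to define $\Pt_t$ from $\Qt_t$) can only shrink the total-variation distance, and since Markov semigroups are total-variation contractions and $T_r\le T_*:=\sup_{r\in K}T_r$, the bound persists at time $T_*$: $\|\delta_{(r,\theta)}\Pt_{T_*}-\delta_{(r,\tilde\theta)}\Pt_{T_*}\|_{TV}\le 1-\epsilon$ for all $r\in K$ and $\theta,\tilde\theta$. I would then invoke Proposition~\ref{thm:marginal measures} with $X$ the along-streamline circle, $Y=[0,\infty)$, $P_t=\Pt_t$, and $T(r)=T_r$, applied to $\mu_1,\mu_2$ (which share an $r$-marginal and are supported in $K$ in the $r$-variable), to obtain \eqref{eq:localdecayprop}.

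Finally, for \eqref{eq:globaldecayprop} I would argue as in \eqref{eq:streamlineheuristic}: if $f_0$ satisfies \eqref{eq:R2meanfree}, the streamline average $\langle \Pt_t f_0\rangle(r)$ solves \eqref{eq:radialheat} with zero initial data and hence vanishes for all $t$, so $\Pt_T f_0(r,\ccdot)$ is mean-zero on each circle $\{r=\text{const}\}$ and
\begin{align*}
|\Pt_T f_0(r,\theta)| &= \Big|\tfrac{1}{2\pi}\int_0^{2\pi}\big(\Pt_T f_0(r,\theta)-\Pt_T f_0(r,\tilde\theta)\big)\,\dee\tilde\theta\Big| \\
&\le \|f_0\|_{L^\infty}\sup_{\tilde\theta}\big\|\Pt_T(r,\theta,\ccdot)-\Pt_T(r,\tilde\theta,\ccdot)\big\|_{TV} \le (1-\epsilon)\|f_0\|_{L^\infty},
\end{align*}
with $T=\sup_{r}T_r$; iterating this together with the contraction $\|\Pt_t f\|_{L^\infty}\le\|f\|_{L^\infty}$ then produces \eqref{eq:globaldecayprop} with $\lambda=1/T$. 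I expect the only real obstacle to be the well-posedness/Bessel-process bookkeeping of the second paragraph — confirming that \eqref{eq:radialSDE} and \eqref{eq:ControlledSDEradial} genuinely satisfy the hypotheses of Proposition~\ref{thm:Girsanov} despite their singular coefficients — with everything else being a direct transcription of the torus argument.
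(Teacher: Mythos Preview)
Your proposal is correct and follows exactly the approach the paper takes: the paper does not give a separate proof of Proposition~\ref{prop:R2control} at all, merely stating that ``with a proof that is identical to the proof of Proposition~\ref{prop:T2control}'' the result follows, after noting (just before the statement) that $r_t$ almost surely never hits zero so that \eqref{eq:radialSDE} is a bona fide SDE on $\R^2$. Your write-up actually supplies more detail than the paper does on the Bessel-process bookkeeping and the well-posedness of the controlled equation, but the logical skeleton---Girsanov via Proposition~\ref{thm:Girsanov}, projection to $[0,2\pi)$, then Proposition~\ref{thm:marginal measures}, then iteration of the streamline estimate---is identical.
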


\begin{remark} \label{rem:bounded}
When \eqref{eq:ADEpolarshear} is posed on the unit disk, the associated SDE has a local time term and so the framework of Section~\ref{sec:girsanov} no longer immediately applies. However, the controls we construct will only cause trajectories to meet which never hit the boundary, and so it turns out that Proposition~\ref{thm:Girsanov} will still be sufficient. See Section~\ref{sec:radial2} for details.
\end{remark}

\section{Proof of Theorem~\ref{thm:T2general}} \label{sec:T2proof}

In this section we will prove Theorem~\ref{thm:T2general} by using its reduction to a quantitative control problem established in Section~\ref{sec:probability}. Throughout, $u: \T^2\to \R^2$ denotes a shear flow of the form \eqref{eq:T2shear} with $b\colon\T \to \R$ satisfying Assumption~\ref{Ass:T2general}, $\varphi:[0,h_0] \to [0,\infty)$ is the associated function defined in \eqref{eq:varphi}, and $T_\nu$ is as defined in the statement of Theorem~\ref{thm:T2general}. As in the statement of Assumption~\ref{Ass:T2general}, $\{\bar{y}_i\}_{i=1}^N$ and $\{y_i\}_{i=1}^M$ denote, respectively, the points where $b'$ is discontinuous or vanishes. Note that by virtue of $\lim_{\nu \to 0}\nu T_\nu = 0$ (already proven in Remark~\ref{rem:Tnu3}), we may always choose $\nu > 0$ small enough so that $\sqrt{\nu T_\nu} \ll 1$.

Before proceeding to the proof, we establish a bit of notation and record some basic properties of functions satisfying Assumption~\ref{Ass:T2general}. Let $\iota:\T \to \{1,-1\}$ be such that for any $y \in \T$ it holds that $h \mapsto b(y+\iota(y)h)$ is smooth on $(0,h_0)$, no critical points of $b$ lie strictly between $y$ and $y+\iota(y)h_0$, and 
$$ |b(y+\iota(y)h)-b(y)| \ge \varphi(h) \quad \forall h\in [0,h_0]. $$
We relabel the critical points $\{y_i\}_{i=1}^M$ of $b$ so that 
$$\{y_i\}_{i=1}^{m-1} = \{y \in \T: b^{(k)}(y) = 0 \quad \forall k\in \N\}$$ 
for some $1 \le m \le M+1$, with the understanding that $m = 1$ when the set is empty. For $0 < \delta \ll 1$ and $1 \le i \le m-1$, let $I_i(\delta) = (y_i - \delta, y_i + \delta)$. By Assumption~\ref{Ass:T2general}b, we may assume that $\delta \ll h_0$ is small enough so that for each $1\le i \le m-1$ both $h \mapsto |b'(y_i + h)|$ and $h \mapsto |b'(y_i-h)|$ are monotone increasing for $h \in [0,2\delta]$. For such a $\delta > 0$ fixed, we then define $t_\nu: \T \to (0,\infty)$ by 
\begin{equation} \label{eq:tnu}
    t_\nu(y)
    = 
    \begin{cases}
        \inf\{t \ge 0: t|b(y+\iota(y)\sqrt{\nu t})-b(y)| \ge 1\} & y \not \in \bigcup_{i=1}^{m-1} I_i(\delta) \\ 
        \inf\{t \ge 0: t|b(y_i)+\iota(y_i)\sqrt{\nu t})-b(y_i)| \ge 1\} & y \in I_i(\delta) \quad \text{for some} \quad 1\le i \le m-1. \\
    \end{cases}
\end{equation}
Note that $\sup_{y \in \T} t_\nu(y) \le T_\nu$ follows immediately from the definition. We also define 
\begin{equation} \label{eq:ellnu}
    \ell_\nu(y) = \sqrt{\nu t_\nu(y)},
\end{equation}
which satisfies $\sup_{y\in \T}\ell_\nu(y) \le \sqrt{\nu T_\nu} \ll 1$ for $\nu>0$ small and should be interpreted as the diffusive length scale associated with the timescale $t_\nu(y)$.  

The two technical lemmas below collect a few easy consequences of Assumption~\ref{Ass:T2general} that will be needed in the proof of Theorem~\ref{thm:T2general}. The proofs are tedious but completely elementary and will be deferred to Appendix~\ref{appendix}. We begin with a statement describing some properties of $b'$ and $t_\nu$ away from the set $\cup_{i=1}^{m-1}I_i(\delta)$. 

\begin{lemma}\label{lem:technical1}
 Let $\iota:\T \to \{1,-1\}$, $\{I_i(\delta)\}_{i=1}^{m-1}$, $t_\nu:\T \to (0,\infty)$, and $\ell_\nu(y) = \sqrt{\nu t_\nu(y)}$ be as defined above for a given function $b\colon\T \to \R$ satisfying Assumption~\ref{Ass:T2general}. Then, there exists $C \ge 1$ such that for all $\nu$ sufficiently small and $y \not \in \cup_{i=1}^{m-1}I_i(\delta)$ we have 
    \begin{equation} \label{eq:b'equivalence}
    \frac{1}{C} \le \inf_{h \in [1/2,4]} \frac{|b'(y+\iota(y)\ell_\nu(y)h)|}{|b'(y+\iota(y)\ell_\nu(y))|} \le \sup_{h\in [1/2,4]} \frac{|b'(y+\iota(y)\ell_\nu(y)h)|}{|b'(y+\iota(y)\ell_\nu(y))|}\le C \end{equation}
and 
\begin{equation}\label{eq:tnuboundregular}
    \frac{1}{C\ell_\nu(y)|b'(y+\iota(y)\ell_\nu(y))|} \le t_\nu(y) \le \frac{C}{\ell_\nu(y)|b'(y+\iota(y)\ell_\nu(y))|}.
\end{equation}
\end{lemma}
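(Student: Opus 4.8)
The plan is to prove Lemma~\ref{lem:technical1} by a direct analysis of the function $h \mapsto |b'(y + \iota(y)h)|$ on the interval $h \in [0, 2\delta]$, exploiting the three structural facts guaranteed by Assumption~\ref{Ass:T2general} outside the bad neighborhoods $\cup_{i=1}^{m-1} I_i(\delta)$: namely that $b$ is $C^\infty$ there with at most finitely many critical points $\{y_i\}_{i=m}^M$ where $b'$ vanishes to \emph{finite} order, that near each $\bar y_i$ the derivative $b'$ blows up in the controlled way quantified by part (a), and that away from $S$ the derivative is bounded above and below. First I would fix $y \notin \cup_i I_i(\delta)$ and set $h_\nu = \ell_\nu(y) = \sqrt{\nu t_\nu(y)}$, noting $h_\nu \ll 1$ for small $\nu$. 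The key observation is that the relevant comparison is always between $|b'|$ evaluated at two points a bounded multiplicative factor apart (between $h_\nu/2$ and $4 h_\nu$ along the ray from $y$), so the estimate \eqref{eq:b'equivalence} is really a statement that $|b'(y + \iota(y)h)|$ does not oscillate much on dyadic scales. I would verify this separately in three regimes: (i) when the segment $[y, y + \iota(y)\cdot 4h_\nu]$ stays a fixed distance from all of $S$, so $|b'| \approx 1$ and the ratio is trivially bounded; (ii) when $y$ is within $O(1)$ of a finite-order critical point $y_i$ (with $i \ge m$), where Taylor's theorem gives $b'(y_i + \iota h) \approx h^{k_i}$ for $h$ small, and $h^{k_i}$ manifestly satisfies the dyadic comparison; (iii) when $y$ is near a blow-up point $\bar y_i$, where the bound $|F(h)| \le Ch|F'(h)|$ from Assumption~\ref{Ass:T2general}(a) together with the monotonicity of the one-sided limit forces $|F'|$ to behave polynomially (like $h^{p-1}$, $p \in (0,1)$), again giving the dyadic comparison. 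Because there are finitely many points in $S$ and the neighborhoods can be chosen disjoint, a compactness/finiteness argument patches these regimes together with a uniform constant $C$.

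Next, for \eqref{eq:tnuboundregular}, I would unpack the definition of $t_\nu(y)$: by construction $t_\nu(y) |b(y + \iota(y)h_\nu) - b(y)| \asymp 1$ (equality in the infimum, with the usual caveat that on $\T$ one may need to handle the boundary of the infimum, but continuity makes this exact or at worst costs a harmless constant). So it suffices to show $|b(y + \iota(y)h_\nu) - b(y)| \asymp h_\nu |b'(y + \iota(y)h_\nu)|$. The upper bound $|b(y+\iota h_\nu) - b(y)| \le \int_0^{h_\nu} |b'(y+\iota s)|\,ds$ combined with the monotonicity of $h \mapsto |b'(y + \iota(y)h)|$ (which holds on $[0,2\delta]$ by the choice of $\delta$ and $\iota$, since $b'$ doesn't change sign there) gives $\le h_\nu |b'(y+\iota h_\nu)|$ — \emph{except} at blow-up points, where one instead uses $|F(h_\nu)| \le C h_\nu |F'(h_\nu)|$ directly. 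For the lower bound I would split $[0,h_\nu]$ and integrate over $[h_\nu/2, h_\nu]$: by monotonicity $|b(y+\iota h_\nu) - b(y+\iota h_\nu/2)| \ge \tfrac{h_\nu}{2} |b'(y + \iota h_\nu/2)| \gtrsim h_\nu |b'(y+\iota h_\nu)|$, the last step by \eqref{eq:b'equivalence}. One must also check that $|b(y+\iota h_\nu)-b(y)|$ is comparable to $|b(y+\iota h_\nu) - b(y+\iota h_\nu/2)|$, which follows again from monotonicity making all the incremental differences comparable (they're increasing, so the full increment is at most a constant times the last halving increment once \eqref{eq:b'equivalence} is in hand).

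The main obstacle I anticipate is the case analysis near the blow-up points $\bar y_i$: one has to carefully translate the hypotheses $|F(h)| \le Ch|F'(h)|$ and $|F''(h)| \le C|F'(h)|/h$ — which say $F$ behaves like a regularly-varying function of index in $(0,1)$ — into the clean dyadic ratio bound \eqref{eq:b'equivalence}, and into the comparison $|F(h_\nu)| \asymp h_\nu |F'(h_\nu)|$ uniformly as $y$ ranges over a neighborhood of $\bar y_i$ (not just with $y = \bar y_i$ exactly). The subtlety is that when $y \ne \bar y_i$ but close, $b'(y + \iota s)$ is \emph{not} singular but is large and varying rapidly, so one needs the $F''$ bound to control how $|b'|$ changes over the interval $[y, y + \iota h_\nu]$ relative to its size — essentially a Gronwall-type estimate $\big| \tfrac{d}{dh}\log|b'(y+\iota h)| \big| = |b''|/|b'| \le C/h$ integrated over a dyadic interval gives a bounded ratio. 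Assembling the uniform constant over all of $\T \setminus \cup_i I_i(\delta)$ then uses that only finitely many ``special'' points exist and elsewhere everything is $O(1)$. The rest is genuinely routine calculus, which is why the authors defer it to the appendix, and I would do the same.
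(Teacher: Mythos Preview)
Your proposal is correct and follows essentially the same approach as the paper: the same three-regime case analysis (away from $S$, near a finite-order critical point via Taylor, near a blow-up point via the Gr\"onwall-type bound $|b''|/|b'| \le C/h$), and the same reduction of \eqref{eq:tnuboundregular} to $|b(y+\iota\ell_\nu)-b(y)| \asymp \ell_\nu |b'(y+\iota\ell_\nu)|$ handled by integrating over $[\ell_\nu/2,\ell_\nu]$ for the lower bound and by the structural hypotheses for the upper bound. One small correction: the monotonicity of $h\mapsto |b'(y+\iota h)|$ does \emph{not} follow from ``$b'$ doesn't change sign'' as you write---near a finite-order critical point the correct reason (which the paper uses) is that $b''$ has a fixed sign there by Taylor expansion, and near a blow-up point it is part of Assumption~\ref{Ass:T2general}(a); also, the paper splits the blow-up upper bound into the sub-cases $|y-\bar y_i|\le \ell_\nu$ (use $|F|\le Ch|F'|$ with base point $\bar y_i$) and $|y-\bar y_i|>2\ell_\nu$ (use the Gr\"onwall argument), which is exactly the subtlety you anticipated.
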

\noindent Thanks to Assumption~\ref{Ass:T2general}b, we also have a bound similar to \eqref{eq:tnuboundregular} that holds near the critical points where $b'$ vanishes to infinite order.

\begin{lemma}\label{lem:technical2}
Let $C \ge 1$ be as in Assumption~\ref{Ass:T2general}b. There exists $C_0 \ge 1$ such that for all $\nu$ sufficiently small and $y \in \cup_{i=1}^{m-1}I_i(\delta)$ there holds 
    \begin{equation} \label{eq:infiniteordertechnical}
       \frac{1}{\sqrt{\nu} |b'(y+\iota(y)C\ell_\nu(y))|} \le C_0t_\nu(y).
    \end{equation}
\end{lemma}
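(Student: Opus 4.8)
The plan is to rewrite the claimed bound \eqref{eq:infiniteordertechnical} using the relation that defines $t_\nu$ on $I_i(\delta)$ and then to recognize the resulting inequality as exactly the estimate provided by Assumption~\ref{Ass:T2general}b. Fix $1 \le i \le m-1$, let $F(h) = b(y_i+h)-b(y_i)$, and for $y \in I_i(\delta)$ write $t_* = t_\nu(y)$ and $\ell_* = \ell_\nu(y) = \sqrt{\nu t_*}$; recall that $t_* \le T_\nu$, so that $\ell_* \le \sqrt{\nu T_\nu} \to 0$ as $\nu \to 0$ by Remark~\ref{rem:Tnu3}.

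First I would extract the defining identity for $t_*$. Since $y_i$ is the only critical point of $b$ in a fixed neighborhood, $h \mapsto |b(y_i + \iota(y)h) - b(y_i)| = |F(\iota(y)h)|$ is continuous, vanishes at $h=0$, and is strictly increasing on $[0,2\delta]$; hence $t \mapsto t\,|F(\iota(y)\sqrt{\nu t})|$ is continuous, strictly increasing from $0$, and is $\ge 1$ at $t = T_\nu$ because $|F(\iota(y)h)| \ge \varphi(h)$. By \eqref{eq:tnu}, $t_*$ is therefore the unique solution of
\[
   t_*\,|F(\iota(y)\ell_*)| = 1 .
\]
Combined with $\ell_*^2 = \nu t_*$ this gives $\nu = \ell_*^2\,|F(\iota(y)\ell_*)|$, i.e. $\sqrt{\nu} = \ell_*\,\sqrt{|F(\iota(y)\ell_*)|}$. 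Substituting $t_* = 1/|F(\iota(y)\ell_*)|$ and this expression for $\sqrt\nu$ into \eqref{eq:infiniteordertechnical}, the bound to be proven becomes equivalent to
\[
   \sqrt{|F(\iota(y)\ell_*)|} \;\le\; C_0\,\ell_*\,|b'(y + \iota(y)C\ell_*)| .
\]

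Next I would lower bound the right-hand side by replacing $y$ with $y_i$ in the argument of $b'$. By our choice of $\delta$, both $h \mapsto |b'(y_i+h)|$ and $h \mapsto |b'(y_i-h)|$ are increasing on $[0,2\delta]$ (this monotonicity is the ``eventually monotone'' part of Assumption~\ref{Ass:T2general}b). Since $\iota(y)$ points away from $y_i$ and $\ell_* \to 0$, for $\nu$ small the point $y + \iota(y)C\ell_*$ lies in $(y_i-2\delta,\, y_i+2\delta)$ at distance at least $C\ell_*$ from $y_i$ on the side determined by $\iota(y)$, so $|b'(y + \iota(y)C\ell_*)| \ge |b'(y_i + \iota(y)C\ell_*)| = |F'(\iota(y)C\ell_*)|$. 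It therefore suffices to prove
\[
   \sqrt{|F(\iota(y)\ell_*)|} \;\le\; C_0\,\ell_*\,|F'(\iota(y)C\ell_*)| ,
\]
which, with $h = \iota(y)\ell_*$, is precisely $\sqrt{|F(h)|} \le C_0\,|h|\,|F'(Ch)|$. The hypothesis $\limsup_{h\to 0} \frac{\sqrt{|F(h)|}}{|h|\,|F'(Ch)|} < \infty$ of Assumption~\ref{Ass:T2general}b (a two-sided $\limsup$, hence covering both signs of $h$) furnishes such a finite $C_0$ together with a threshold below which the inequality holds, and $|h| = \ell_* \le \sqrt{\nu T_\nu}$ meets this threshold once $\nu$ is small. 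Taking the largest of the finitely many constants obtained as $i$ ranges over $1,\dots,m-1$ yields the constant $C_0$ in the statement, valid for all $y \in \bigcup_i I_i(\delta)$.

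The only genuinely delicate point is the bookkeeping of the sign $\iota$: one must ensure that the direction entering the defining relation for $t_\nu$ on $I_i(\delta)$ via \eqref{eq:tnu} is the same as the $\iota(y)$ appearing in \eqref{eq:infiniteordertechnical}, and --- for $y$ lying within $O(\ell_\nu(y))$ of $y_i$ --- that the monotonicity of $|b'|$ is invoked on the correct side of $y_i$. Handling the two sides of $y_i$ symmetrically (via $h \mapsto -h$) makes this routine; all of the analytic content is contained in the identity $t_\nu(y)\,|F(\iota(y)\ell_\nu(y))| = 1$, which converts \eqref{eq:infiniteordertechnical} directly into Assumption~\ref{Ass:T2general}b.
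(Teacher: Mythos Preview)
Your proof is correct and follows essentially the same route as the paper's: both arguments use the defining relation $t_\nu(y)\,|b(y_i+\iota\ell_\nu)-b(y_i)|=1$ on $I_i(\delta)$ to rewrite \eqref{eq:infiniteordertechnical} as $\sqrt{|F(\iota\ell_\nu)|} \le C_0\,\ell_\nu\,|b'(y+\iota(y)C\ell_\nu)|$, then invoke the monotonicity of $|b'|$ near $y_i$ to pass from $y$ to $y_i$ and finish with Assumption~\ref{Ass:T2general}b. One small slip: in the displayed identity $t_*\,|F(\iota(y)\ell_*)|=1$ the sign should be $\iota(y_i)$ rather than $\iota(y)$ per \eqref{eq:tnu}; the paper handles this by assuming without loss of generality $y\in(y_i,y_i+\delta)$ and taking the $+$ side throughout, which is exactly the symmetry reduction you flag in your final paragraph.
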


With the two lemmas above in hand we are now ready to prove Theorem~\ref{thm:T2general}. 

\begin{proof}[Proof of Theorem~\ref{thm:T2general}]
For $x_0,\tilde{x}_0, y_0 \in \T$ and a control $V_t$, let $(x_t,y_t)$ and $(\tilde{x}_t,\tilde{y}_t)$ denote the solutions of 
\begin{equation}\label{eq:sde_normal}
        \begin{cases}
            \dee x_t = -b(y_t)\,\dt + \sqrt{2\nu}\,\dee W_t, \\
            \dee y_t = \sqrt{2\nu}\,\dee B_t
        \end{cases} \quad \text{and} \qquad \begin{cases}
            \dee \tilde{x}_t = -b(\tilde{y}_t)\,\dt + \sqrt{2\nu}\,\dee W_t, \\
            \dee \tilde{y}_t = \sqrt{2\nu}\,\dee B_t + \sqrt{2\nu}V_t\,\dt, 
        \end{cases}
    \end{equation}
where $(x_t,y_t)|_{t=0} = (x_0,y_0)$ and $(\tilde{x}_t, \tilde{y}_t)|_{t=0} = (\tilde{x}_0, y_0)$. We view both of the SDEs above as equations on $\R^2$ with initial conditions in $[0,1)^2$. By Proposition~\ref{prop:T2control} and the fact that $\sup_{y \in \T} t_\nu(y) \le T_\nu$, it is sufficient to find constants $C,c > 0$ such that for any $x_0,\tilde{x}_0, y_0 \in \T$ and $\nu$ sufficiently small there exist $T \le Ct_\nu(y_0)$ and an adapted control $V_t$ satisfying
\begin{equation} \label{eq:generalthmgoal1}
\P\left(\int_0^T |V_t|^2 \dt \le C\right) = 1
\end{equation}
for which we have
\begin{equation} \label{eq:generalthmgoal2}
    \P\big(\,(x_{T}, y_{T}) = (\tilde{x}_{T}, \tilde{y}_{T})\,\big) \ge c.
\end{equation}
We will consider separately the cases $y_0 \not \in \cup_{i=1}^{m-1} I_i(\delta)$ and $y_0 \in \cup_{i=1}^{m-1} I_i(\delta)$.

\vspace{0.2cm}
\noindent \textbf{Case 1}: We suppose here that $y_0 \not \in \cup_{i=1}^{m-1} I_i(\delta)$. Without loss of generality, we assume $\iota(y_0) = 1$, $b'(y) \ge 0$ for all $y \in [y_0, y_0+h_0]$, and $x_0 < \tilde{x}_0$. For simplicity of notation, we write $\ell_\nu$ for $\ell_\nu(y_0)$ and $t_\nu$ for $t_\nu(y_0)$. Let
\begin{equation}\label{eq:stopping_time_initial_torus}
            \tau_0 = \inf\{t \ge 0 \colon y_t = y_0 + 2\ell_\nu\}
        \end{equation}
and
\begin{equation}\label{eq:stopping_time_final_torus}
            \overline{\tau} = \inf\{t \ge \tau_0 \colon y_t \not\in [y_0 + \ell_\nu, y_0 + 3\ell_\nu]\}.
        \end{equation}
The control will be chosen to be nonzero only for $\tau_0 \le t \le \bar{\tau}$ and the construction will proceed in two steps. First, we will choose $V_t$ so that $\tilde{y}_t - y_t$ grows enough that a sufficient differential is introduced in the velocities experienced by $x_t$ and $\tilde{x}_t$. Second, $V_t$ will be chosen to return $\tilde{y}_t - y_t$ to zero in such a way $x_t = \tilde{x}_t$ precisely when $y_t = \tilde{y}_t$. To ensure that \eqref{eq:generalthmgoal1} and \eqref{eq:generalthmgoal2} hold with $T \leqc t_\nu$, we will show that the length of each step is $\mathcal{O}(t_\nu)$ as $\nu \to 0$ and also that the cost of the associated control (i.e., its contribution to the left-hand side of \eqref{eq:generalthmgoal2}) is bounded uniformly in $\nu$. That $\bar{\tau} - \tau_0$ is large enough for our construction to take place with a probability that does not depend on $\nu$ will then follow from the scaling properties of Brownian motion.

\textbf{Step 1} (growing $\tilde{y}_t - y_t$): Let $\rho_t = \tilde{x}_t - x_t$ and $h_t = \tilde{y}_t - y_t$. For later use, we record the differentials
\begin{align}
    \dee h_t &= \sqrt{2\nu} V_t \dt, \label{eq:dh} \\ 
    \dee \rho_t & = -[b(y_t+h_t) - b(y_t)]\dt. \label{eq:drho}
\end{align}
Define the stopping time
$$ \tau_1 = \inf\{t \ge \tau_0: h_t = \ell_\nu \sqrt{\rho_t}\} \wedge \bar{\tau}. $$
For $\tau_0 \le t \le \tau_1$, we set $V_t = t_\nu^{-1/2}$. To estimate the length of the time interval $[\tau_0,\tau_1]$, first note that by \eqref{eq:dh} and the definition of $V_t$, we have 
\begin{equation}\label{eq:hstep1}
h_{t} = \frac{\sqrt{2\nu}}{\sqrt{t_\nu}}(t - \tau_0)
\end{equation}
for all for $\tau_0\le t \le \tau_1$. Now, by \eqref{eq:drho} and the fact that $b'(y) \ge 0$ for $y \in [y_0,y_0+h_0]$, $\rho_t$ is monotone decreasing for $\tau_0 \le t \le \tau_1$ with $0 < \rho_{\tau_0} \le 1$. Since also $h_t$ is monotone increasing for $\tau_0 \le t \le \tau_1$ with $h_{\tau_0} = 0$, it then follows by \eqref{eq:hstep1}, the definition of $\ell_\nu$, and the continuity of trajectories that we have
\begin{equation} \label{eq:taudiff1}
\tau_1 - \tau_0 \le t_\nu. 
\end{equation}
In particular, if $\bar{\tau} - \tau_0 > t_\nu$, then $\tau_1 \le \bar{\tau}$ and hence also $h_{\tau_1} = \ell_\nu \sqrt{\rho_{\tau_1}}$.

\textbf{Step 2} (decreasing $\tilde{y}_t - y_t$ while $\rho_t \to 0$): Define the stopping time 
$$ \tau_2 = \inf\{t \ge \tau_1:\rho_t = h_t = 0\} \wedge \bar{\tau}.$$
For $\tau_1 \leq t \leq \tau_2$, we set  
$$ V_t = -\frac{ \ell_\nu}{\sqrt{2\nu}} \left(\frac{b(y_t+h_t) - b(y_t)}{2 \sqrt{\rho_t}}\right).$$
Recalling \eqref{eq:drho}, this choice is made so that 
$$ \frac{dh_t}{dt} = \sqrt{2\nu}V_t = \ell_\nu \frac{d}{dt} \sqrt{\rho_t}, $$
which implies that $h_t = \ell_\nu\sqrt{\rho_t}$ for $\tau_1 \le t \le \tau_2$ whenever $\tau_1 \le \bar{\tau}$. Let us assume now in the computations for the remainder of this step that $\tau_1 \le \bar{\tau}$ and $\tau_1 \le t \le \tau_2$. By the definition of $\tau_0$ and $\bar{\tau}$, we have that $[y_t,y_t+h_t] \subseteq [y_0+\ell_\nu, y_0 + 4\ell_\nu]\subseteq [y_0,y_0+h_0]$, and hence by Lemma~\ref{lem:technical1} there exists $c_1 > 0$ such that 
$$ b(y_t+h_t) - b(y_t) \ge c_1 b'(y_0+\ell_\nu)h_t. $$
Thus, 
$$ \frac{d\rho_t}{dt} = -[b(y_t+h_t) - b(y_t)] \le -c_1 b'(y_0+\ell_\nu) \ell_\nu \sqrt{\rho_t}. $$
Solving this differential equality and then applying estimate \eqref{eq:tnuboundregular} of Lemma~\ref{lem:technical1} shows that 
\begin{equation}\label{eq:taudiff2}
    \tau_2 - \tau_1 \le \frac{\sqrt{2}}{c_1 \ell_\nu b'(y_0+\ell_\nu)} \le C_1 t_\nu
\end{equation}
for some $C_1 > 0$. The function $V_t$ is set to be zero for all $t \ge \tau_2$.

\textbf{Step 3} (cost of control and conclusion): We now estimate the integral in \eqref{eq:generalthmgoal2}. First note that by combining both inequalities in Lemma~\ref{lem:technical1}, it follows that for some $C_2 > 0$ we have
$$ |b(y_t+h_t) - b(y_t)| \leqc \sqrt{\rho_t}\ell_\nu(y)|b'(y_0+\ell_\nu)| \le \frac{C_2 \sqrt{\rho_t}}{t_\nu} \quad \forall \tau_1 \le t \le \tau_2.$$
Therefore, by \eqref{eq:taudiff1} and \eqref{eq:taudiff2} there holds
\begin{align*}
    \int_0^\infty |V_t|^2 \dt &\le \int_{\tau_0}^{\tau_1} \frac{1}{t_\nu} \dt + \frac{\ell_\nu^2}{8\nu}\int_{\tau_1}^{\tau_2} \Big|\frac{b(y_t+h_t) - b(y_t)}{\rho_t}\Big|^2 \dt \\ 
    & \le 1 + \frac{\ell_\nu^2}{8\nu} \frac{C_2^2}{t_\nu} = 1 + \frac{C_2^2}{8},
\end{align*}
which proves \eqref{eq:generalthmgoal2}. Regarding \eqref{eq:generalthmgoal1}, let $T = (2+C_1)t_\nu$ and 
$$\Omega_0 = \{\tau_0 \le t_\nu \text{ and } \tau_0 - \bar{\tau} \ge (1+C_1)t_\nu\}.$$
The construction of $V_t$ above implies that $(x_T(\omega),y_T(\omega)) = (\tilde{x}_T(\omega), \tilde{y}_T(\omega))$ for every $\omega \in \Omega_0$. The scaling properties of Brownian motion imply that $\P(\Omega_0)$ is bounded below uniformly in $\nu$, and so the proof under the assumption of Case 1 is now complete.

\vspace{0.2cm}
\noindent \textbf{Case 2}: We suppose now that $y_0 \in \cup_{i=1}^{m-1}I_i(\delta)$. The general scheme of the proof is the same as in Case 1, but we must proceed slightly differently in terms of how certain quantities are chosen to scale in $\nu$ because the estimates of Lemma~\ref{lem:technical1} are no longer true. As in Case 1, we may assume without loss of generality that $y_0 \in [y_i, y_i+\delta)$, $b'(y) \ge 0$ for all $y \in [y_0,y_0+h_0]$, and $x_0 < \tilde{x}_0$. Similar to before, we also define 
$$ \tau_0 = \inf\{t \ge 0: y_t = y_0 + 2C\ell_\nu\} \quad \text{and} \quad \bar{\tau} = \inf\{t \ge \tau_0: y_t \not \in [y_0 + C\ell_\nu, y_0 + 3C\ell_\nu]\}, $$
where $C \ge 1$ is as in Assumption~\ref{Ass:T2general}b.
For $\rho_t$ and $h_t$ as defined in Case 1, let 
$$ \tau_1 = \inf\{t\ge \tau_0: h_t = \sqrt{\nu}\sqrt{\rho_t}\} \wedge \bar{\tau}.$$
Then, we set $V_t = 1$ for $\tau_0 \le t \le \tau_1$. Similar to in the proof of Case 1, it is easy to check that 
\begin{equation}\label{eq:taudiff1case2}
    \tau_1 - \tau_0 \le 1.
\end{equation}
Next, let 
$$ \tau_2 = \inf\{t \ge \tau_1: \rho_t = h_t = 0\} \wedge \bar{\tau} $$
and for $\tau_1 \le t \le \tau_2$ set 
$$ V_t = -\left(\frac{b(y_t+h_t)-b(y_t)}{2\sqrt{2\rho_t}}\right).$$
Then, $h_t = \sqrt{\nu} \sqrt{\rho_t}$ for $\tau_1 \le t \le \tau_2$ whenever $\tau_1 \le \bar{\tau}$. Assuming that $\tau_1 \le \bar{\tau}$, for all $\tau_1 \le t \le \tau_2$ we have 
$$ \frac{d \rho_t}{dt} = -[b(y_t+h_t) - b(y_t)] \le -\sqrt{\nu} b'(y_0+C\ell_\nu) \sqrt{\rho_t}, $$
where in the inequality we used the definition of $\bar{\tau}$ and the fact that $y \mapsto b'(y)$ is monotone increasing for $y \in [y_0,y_0+h_0]$ by assumption. Solving the differential inequality and then applying Lemma~\ref{lem:technical2} shows that there exists $C_1 > 0$ such that  
\begin{equation}\label{eq:taudiff2case2}
    \tau_2 - \tau_1 \le C_1 t_\nu.
\end{equation}
As before we then set $V_t = 0$ for all $t \ge \tau_2$. By \eqref{eq:taudiff1case2} and \eqref{eq:taudiff2case2}, defining $I = [y_i-2\delta, y_i+2\delta]$ we have 
\begin{align*} \int_0^\infty |V_t|^2 \dt &\le \int_{\tau_0}^{\tau_1} \dt + \frac{1}{2}\int_{\tau_1}^{\tau_2} \frac{|b(y_t+h_t) - b(y_t)|^2}{\rho_t}\dt \\ 
& \le 1 + \frac{\nu}{2} \int_{\tau_1}^{\tau_2} \|b'\|^2_{L^\infty(I)}\dt \\ 
& \le 1+ \nu t_\nu C_1 \|b'\|^2_{L^\infty(I)}.
\end{align*}
As $\nu t_\nu$ remains bounded as $\nu \to 0$, we obtain \eqref{eq:generalthmgoal2}. The proof is then concluded as in Case 1 by setting $T = (2+C_1)t_\nu$.
\end{proof}

\section{Proofs for radial shears on $\R^2$ and the unit disk} \label{sec:Radialproofs}

In this section, we will prove Theorem~\ref{thm:RadialGen} by slightly modifying the control argument used in the previous section to prove Theorem~\ref{thm:T2general}. The main approach will be essentially unchanged, but a few new ingredients will be needed to deal with the fact that the SDE associated with \eqref{eq:ADEpolarshear} has multiplicative noise due to the variable coefficient Laplacian. We will begin in Section~\ref{sec:radial1} by proving Theorem~\ref{thm:RadialGen} in the $\R^2$ setting. Then, in Section~\ref{sec:radial2} we discuss the minor modifications necessary to treat shear flows on the unit disk. 

\subsection{Proof of Theorem~\ref{thm:RadialGen} on $\R^2$} \label{sec:radial1}

Throughout this section, $b\colon[0,\infty) \to \R$ is a function
satisfying Assumption~\ref{ass:R2_general}, $\{r_1,\ldots,r_M\}
\subseteq (0,\infty)$ denote its critical points, and $q \ge 0$ is
such that \eqref{eq:qdef} holds. Moreover, $t_\nu\colon  [0,\infty) \to
(0,\infty)$ is as defined in \eqref{eq:tnurintro} and $\iota\colon
[0,\infty) \to \{1,-1\}$ is such that, for some $0 < h_0 \ll 1$ that
depends only on $b$, no elements of $\{0,r_1,\ldots,r_M\}$ lie
strictly between $r$ and $r+\iota(r)h_0$ and $h \mapsto
|b(r+\iota(r)h) - b(r)|$ is monotone increasing for $h \in
[0,h_0]$. As on $\T^2$, $\iota(r)$ is not necessarily
unique when $r$ is far from $\{0,r_1,\ldots,r_M\}$ and in this case we may simply set $\iota(r)$ to be the choice that gives a smaller value in \eqref{eq:tnurintro}. Just like in Section~\ref{sec:T2proof}, we also define the length scale $\ell_\nu(r) = \sqrt{\nu t_\nu(r)}$ and observe that we may always assume that $\sup_{r \in [0,\infty)} \ell_\nu(r)$ is as small we wish due to \eqref{eq:tnurbound}.

We begin with a technical lemma that will play the role that Lemma~\ref{lem:technical1} played in the proof of Theorem~\ref{thm:T2general}. Just like Lemmas~\ref{lem:technical1} and~\ref{lem:technical2} the proof is completely elementary and will be deferred to Appendix~\ref{appendix}.

\begin{lemma} \label{lem:technicalradial}
    Let $b\colon [0,\infty) \to \R$ satisfy Assumption~\ref{ass:R2_general}. There exists $C \ge 1$ such that for all $\nu$ sufficiently small and $r \in [0,\infty)$ we have 
    \begin{equation} \label{eq:b'equivalencerad}
    \frac{1}{C} \le \inf_{h \in [1/2,6]} \frac{|b'(r+\iota(r)\ell_\nu(r)h)|}{|b'(r+\iota(r)\ell_\nu(r))|} \le \sup_{h\in [1/2,6]} \frac{|b'(r+\iota(r)\ell_\nu(r)h)|}{|b'(r+\iota(r)\ell_\nu(r))|}\le C \end{equation}
and
\begin{equation}\label{eq:tnuboundregularrad}
    \frac{1}{C\ell_\nu(r)|b'(r+\iota(r)\ell_\nu(r))|} \le t_\nu(r) \le \frac{C}{\ell_\nu(r)|b'(r+\iota(r)\ell_\nu(r))|},
\end{equation}
\end{lemma}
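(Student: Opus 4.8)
The plan is to mimic the proof of Lemma~\ref{lem:technical1}, using Assumption~\ref{ass:R2_general} in place of Assumption~\ref{Ass:T2general}; this is a purely deterministic statement about the shear profile $b$, with no probability involved. The organizing principle is that $[0,\infty)$ splits into finitely many \emph{singular zones} --- a neighborhood $[0,\eta]$ of the origin, a neighborhood $[r_i-\eta,r_i+\eta]$ of each critical point, and (since $D=\R^2$) a neighborhood $[R,\infty)$ of infinity --- together with a \emph{bulk} region which is compact and on which $|b'|$ is bounded above and below by positive constants by continuity. Throughout, I may assume $\sup_{r}\ell_\nu(r)$ is as small as I wish by \eqref{eq:tnurbound}, and I will use that, for $\eta$ and $\ell_0$ chosen small relative to $h_0$ and to the minimal spacing of the points $\{0,r_1,\dots,r_M\}$, whenever $r$ lies in a singular zone the map $h\mapsto r+\iota(r)h$ moves monotonically away from the associated singular point $r_*\in\{0,r_1,\dots,r_M\}$, so that $r=r_*+\iota(r)s$ for some $s\ge0$.

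The heart of the argument is a uniform \emph{doubling estimate}: I claim there exist $C\ge1$ and $\ell_0>0$ so that for every $r\in[0,\infty)$ and every $0<\ell\le\ell_0$,
\begin{equation*}
\frac1C\le\frac{|b'(r+\iota(r)h_1\ell)|}{|b'(r+\iota(r)h_2\ell)|}\le C\quad\text{for }h_1,h_2\in[\tfrac12,6],\qquad \sup_{0\le h\le1}|b'(r+\iota(r)h\ell)|\le C\,|b'(r+\iota(r)\ell)|.
\end{equation*}
I would verify this zone by zone. In the bulk it is immediate once $\ell_0$ is so small that every point $r+\iota(r)h\ell$ with $h\le6$ stays in a fixed slightly enlarged compact set on which $|b'|$ is comparable to a constant. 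Near a critical point $r_i$, Taylor's theorem gives the factorization $b'(r_i+t)=t^{k_i}g_i(t)$, where $k_i\le N$ is the order of vanishing of $b'$ at $r_i$ and $g_i$ is smooth and nonvanishing on a small interval about $0$; writing $r=r_i+\iota(r)s$ with $s\ge0$ yields $|b'(r+\iota(r)h\ell)|=(s+h\ell)^{k_i}\,|g_i(\iota(r)(s+h\ell))|$, and since $\tfrac{s+h_1\ell}{s+h_2\ell}$ lies between $\min(h_1/h_2,1)$ and $\max(h_1/h_2,1)$ for every $s\ge0$, both inequalities follow with a constant depending only on $N$ and on $\sup|g_i|/\inf|g_i|$. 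Near the origin the same argument applies with $t^{k_i}g_i(t)$ replaced by $r^q\widetilde g(r)$, where $\widetilde g(r)=b'(r)/r^q$ extends continuously and nonvanishingly to $[0,\eta]$ by \eqref{eq:qdef}. Near infinity I would instead invoke \eqref{eq:R2assumption}: for $r$ large $b'$ does not vanish and $\bigl|\tfrac{d}{dr}\log|b'(r)|\bigr|=|b''(r)/b'(r)|$ is bounded, so $|b'|$ changes by at most a bounded factor --- one tending to $1$ as $\ell_0\to0$ --- over any interval of length $\le6\ell_0$, which gives both inequalities. Taking $C$ the maximum and $\ell_0$ the minimum of the thresholds coming from the finitely many zones completes the step, and the first inequality applied with $\ell=\ell_\nu(r)$ and $h_2=1$ is exactly \eqref{eq:b'equivalencerad}.

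For \eqref{eq:tnuboundregularrad}, I would use that $t_\nu(r)$ is characterized by $t_\nu(r)\,|b(r+\iota(r)\ell_\nu(r))-b(r)|=1$ with $\ell_\nu(r)=\sqrt{\nu t_\nu(r)}$, together with the fact that $b'$ keeps a fixed sign between $r$ and $r+\iota(r)h_0$, so that $|b(r+\iota(r)\ell_\nu(r))-b(r)|=\int_0^{\ell_\nu(r)}|b'(r+\iota(r)s)|\,ds$. The bound $\int_0^{\ell_\nu(r)}|b'(r+\iota(r)s)|\,ds\le C\,\ell_\nu(r)\,|b'(r+\iota(r)\ell_\nu(r))|$ is precisely the second half of the doubling estimate with $\ell=\ell_\nu(r)$, and rearranging gives the lower bound on $t_\nu(r)$ in \eqref{eq:tnuboundregularrad}; the complementary bound $\int_0^{\ell_\nu(r)}|b'(r+\iota(r)s)|\,ds\ge\int_{\ell_\nu(r)/2}^{\ell_\nu(r)}|b'(r+\iota(r)s)|\,ds\ge\tfrac{\ell_\nu(r)}{2C}\,|b'(r+\iota(r)\ell_\nu(r))|$ follows from the first half with $h_1=s/\ell_\nu(r)\in[\tfrac12,1]$ and $h_2=1$, and rearranging gives the upper bound after relabelling the constant. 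I expect the only genuinely delicate point to be the uniformity of the constant as $r$ crosses between a singular zone and the bulk; exactly as in Lemma~\ref{lem:technical1}, this is handled by the fact that the factorizations $b'(r_i+t)=t^{k_i}g_i(t)$ and $b'(r)=r^q\widetilde g(r)$, and the log-Lipschitz bound at infinity, each pin down the relevant ratios for \emph{all} $s\ge0$ simultaneously rather than only in the regime $\ell\ll s$.
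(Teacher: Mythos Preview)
Your proposal is correct and follows essentially the same approach as the paper's proof: both split $[0,\infty)$ into a neighborhood of the origin (where $b'(r)\sim r^q$ by \eqref{eq:qdef}), neighborhoods of the critical points (handled by Taylor expansion, which in the paper is just a reference back to Lemma~\ref{lem:technical1}), a neighborhood of infinity (handled via the log-Lipschitz bound $|b''/b'|\le C$ from \eqref{eq:R2assumption}, which the paper phrases as a Gr\"onwall inequality for $F(h)=|b'(r+\iota(r)h)|$), and a compact bulk where $|b'|$ is bounded above and below; then \eqref{eq:tnuboundregularrad} is reduced to \eqref{eq:b'equivalencerad} exactly as you describe, via the integral representation of $|b(r+\iota(r)\ell_\nu(r))-b(r)|$.
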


We are now ready to prove Theorem~\ref{thm:RadialGen} in the $\R^2$ case. The construction of the control follows an argument very similar to the proof of Theorem~\ref{thm:T2general}.

 \begin{proof}[Proof of Theorem~\ref{thm:RadialGen} when $D = \R^2$]

 For adapted controls $U_t,V_t\colon[0,\infty) \to \R$, let $(r_t,\theta_t)$ and $(\tilde{r}_t, \tilde{\theta}_t)$ denote the solutions of the SDEs
\begin{equation} \label{eq:radialSDEs}
\begin{cases}
    \dee \theta_t = -b(r_t)\dt + \frac{\sqrt{2\nu}}{r_t}\dee W_t, \\ 
    \dee r_t = \frac{\nu}{r_t}\dt + \sqrt{2\nu}\dee B_t
\end{cases}
\quad \text{and} \quad 
\begin{cases}
     \dee \tilde{\theta}_t = -b(\tilde{r}_t)\dt + \frac{\sqrt{2\nu}}{\tilde{r}_t}(\dee W_t + U_t \dt), \\ 
    \dee \tilde{r}_t = \frac{\nu}{\tilde{r}_t}\dt + \sqrt{2\nu}(\dee B_t + V_t \dt)
\end{cases}
\end{equation}
viewed as equations on $\R^2$ (recall the discussion preceding Proposition~\ref{prop:R2control}) and with initial conditions $(r_0,\theta_0), (\tilde{r}_0, \tilde{\theta}_0) \in (0,\infty) \times [0,2\pi)$. By Proposition~\ref{prop:R2control}, it is sufficient to show that there exist constants $c, C > 0$ such that for any pair of initial conditions $(r_0,\theta_0), (\tilde{r}_0, \tilde{\theta}_0) \in (0,\infty) \times [0,2\pi)$ there exist $T \leq Ct_\nu(r_0)$ and a choice of controls $U_t,V_t$ such that 
\begin{equation}\label{eq:radial_cost}
   \P\Big(\int_0^T (|(U_t|^2 +|V_t|^2)\dt \le C\Big) = 1
        \end{equation}
and 
\begin{equation} \label{eq:radialhit}
\P((r_T, \theta_T) = (\tilde{r}_T, \tilde{\theta}_T))\ge c.
\end{equation}

Before proceeding to the proof, we record an inequality that will be needed. Specifically, there is $c > 0$ such that for all $r \in [0,\infty)$ we have 
\begin{equation} \label{eq:ellnuass}
   \frac{c}{r} \le \frac{1}{r+\iota(r)\ell_\nu(r)h} \le \frac{2}{r+\ell_\nu(r)} \quad \forall h\in [1,6]
\end{equation}
provided that $\nu$ is sufficiently small. Regarding the upper bound, for fixed $0 < \delta \ll h_0$ and all $r \in [0,\delta]$ it is trivial because in this case $\iota(r) = 1$. On the other hand, when $r > \delta$, the upper bound clearly holds for all $\nu$ sufficiently small from the fact that $\lim_{\nu \to 0} \sup_{r \ge 0}\ell_\nu(r) = 0$. The lower bound is immediate for similar reasons when $r > \delta$, and for $r \le \delta$ it holds from the easily verified fact that $\ell_\nu(r) \le C r$ for all $r \in [0,\delta]$.

With \eqref{eq:ellnuass} in hand, we now turn to the control argument. We may assume without loss of generality that $\theta_0 < \tilde{\theta}_0$, $\tilde{\theta}_0 - \theta_0 \le \pi$, and $b'(r) \ge 0$ for all $r$ between $r_0$ and $r_0+ \iota(r_0)h_0$. For simplicity of notation, we will write just $\ell_\nu$ and $t_\nu$ for the functions evaluated at $r = r_0$. Let $\rho_t = \tilde{\theta}_t - \theta_t$ and $h_t = \iota(r_0)(\tilde{r}_t - r_t)$. For future use, we record the differentials  
\begin{equation}\label{eq:rho_radial_differential}
            \dee \rho_t = -[b(\tilde{r}_t) - b(r_t)]\dt - \sqrt{2\nu} \frac{h_t}{r_t \tilde{r}_t} \dee W_t + \sqrt{2\nu}\frac{U_t}{ \tilde{r}_t} \dt
        \end{equation}
and 
\begin{equation}\label{eq:h_radial_differential}
            \dee h_t = -\nu\frac{\iota(r_0) h_t}{r_t\tilde{r}_t} \dt + \iota(r_0)\sqrt{2\nu}V_t\dt.
        \end{equation}
 We also define the stopping times 
\begin{equation}\label{eq:on_switch_radial}
\tau_0 = \inf\{t > 0 \colon r_t  = r_0 + 2\iota(r_0)\ell_\nu\}
\end{equation}
and
\begin{equation}\label{eq:off_switch_radial}
            \overline{\tau} = \inf\{t \ge \tau_0 \colon |r_t - (r_0 + 2\iota(r_0) \ell_\nu)| \ge \ell_\nu\},
        \end{equation}
and we set $U_t = V_t = 0$ for all $t \le \tau_0$ and $t \ge \bar{\tau}$.

\textbf{Step 1} (increasing $h_t$): Let
\begin{equation}\label{eq:tau_1_radial}
\tau_1 = \inf\{t > \tau_0 \colon h_t = \ell_{\nu}\sqrt{\rho_t}\} \wedge (\tau_0 + \sqrt{\pi}\delta t_\nu) \wedge \bar{\tau},
\end{equation}
where $\delta > 0$ is a small parameter to be chosen at the end of the proof. For $\tau_0 \le t \le \tau_1$, we set $U_t = 0$ and 
\begin{equation}\label{eq:r_control_radial_step_1}
V_t = \sqrt{\frac{\nu}{2}}\frac{h_t}{r_t\tilde{r}_t} + \frac{\iota(r_0)}{\delta\sqrt{t_\nu}}.
\end{equation}
Then, by \eqref{eq:h_radial_differential} and \eqref{eq:r_control_radial_step_1}, for $\tau_0 \le t \le \tau_1$ we have
\begin{equation}\label{eq:h_radial_step_1}
h_t = \frac{\sqrt{2\nu}}{\delta\sqrt{t_\nu}}(t - \tau_0).
\end{equation}
In particular, $h_t$ increases in a continuous manner from zero to $\sqrt{2\pi} \ell_\nu$ over time window $\tau_0 \le t \le \tau_1$, unless the equality $h_t = \ell_\nu \sqrt{\rho_t}$ occurs sooner than $t = \tau_0 + \sqrt{\pi}\delta t_\nu$.
Let 
\begin{equation} \label{eq:Omega1def}
\Omega_1 = \Big\{\omega \in \Omega: \sup_{\tau_0 \le t \le 
\tau_1} \rho_t \le 2 \pi\Big\}. 
\end{equation}
Since $\rho_{\tau_0} > 0$, it is easy to see from \eqref{eq:h_radial_step_1} that \begin{equation} \label{eq:Omega1purpose}
h_{\tau_1} = \ell_\nu \sqrt{\rho_{\tau_1}} \quad \forall \omega \in \Omega_1 \cap \{\bar{\tau} - \tau_0 \ge \sqrt{\pi}\delta t_\nu\}.
\end{equation} 
For later use in deducing \eqref{eq:radialhit} we now estimate $\P(\Omega_1)$ from below. First observe that by \eqref{eq:rho_radial_differential} and $\tilde{r}_t = r_t + \iota(r_0) h_t$, for $\tau_0 \leq t \leq \tau_1$ we have  
\begin{equation}\label{eq:step_one_rho_formula}
\rho_t = \rho_{\tau_0} - \int_{\tau_0}^t[b(r_s + \iota(r_0)h_s) - b(r_s)]\dee s - M_t, \quad \text{where} \quad M_t = \sqrt{2\nu} \int_{\tau_0}^t \frac{h_s}{r_s (r_s + \iota(r_0)h_s)} \dee W_s.
\end{equation}
Since $0<\rho_{\tau_0} \le \pi$ and $b'(r) \ge 0$ for $r \in [r_0 + \iota(r_0)h_0]$, we see then that $\rho_t \le \pi + |M_t|$
for $\tau_0 \le t \le \tau_1$ and to have $\omega \in \Omega_1$ it suffices for $\sup_{\tau_0 \le t \le \tau_1} |M_t| \le \pi$ to hold. Notice now that since $0 \le h_s \le \sqrt{2\pi}\ell_\nu$ and $|r_t - (r_0 + 2\iota(r_0) \ell_\nu)| \le \ell_\nu$ for $\tau_0 \le t \le \tau_1$, we see that $r_t$ and $r_t + \iota(r_0) h_t$ lie between $r_0$ and $r_0 + 6\iota(r_0) \ell_\nu$ for all $\tau_0 \le t \le \tau_1$. Thus, applying \eqref{eq:ellnuass}, we have  
\begin{equation} \label{eq:rdenominatorclarify}
    \left|\frac{1}{r_t (r_t+ \iota(r_0) h_t)}\right|\le \frac{4}{\ell_\nu^2} \quad \forall t\in[\tau_0,\tau_1].
\end{equation}
By Doob's martingale inequality, the It\^{o} isometry, and \eqref{eq:rdenominatorclarify} we then have 
\begin{align*}
   \P\Big(\sup_{\tau_0 \le t \le \tau_1^*} |M_t| > \pi\Big)  \le \frac{2 \nu}{\pi^2} \E \int_{\tau_0}^{\tau_1} \Big|\frac{h_t}{r_t (r_t + \iota(r_0)h_t)}\Big|^2 \dee t  
 \le \frac{64 \nu}{\pi} \E \int_{\tau_0}^{\tau_1} \frac{1}{\ell_\nu^2} \dee t \le 128\delta,
\end{align*}
where in the last inequality we used that by definition we have $\nu t_\nu/ \ell_\nu^2 = 1$. Thus,
\begin{equation} \label{eq:Omega1prob}
    \P(\Omega_1) \ge 1 - 128 \delta.
\end{equation}

\textbf{Step 2} (decreasing $h_t$ while $\rho_t$ gets close to zero): This step proceeds quite similarly to Step 2 in the proof of Theorem~\ref{thm:T2general}, but due to the multiplicative noise we will only bring $\rho_t$ to zero up to a small error term that will be dealt with below in Step 3. For $t \geq \tau_1$, we define the process  
\begin{equation}\label{eq:eta}
            \begin{cases}
                \dee \eta_t = -\frac{1}{\delta^2}[b(\tilde{r}_t)- b(r_t)]\dt \\
                \eta_{\tau_1} = \frac{1}{\delta^2}\rho_{\tau_1},
            \end{cases}
        \end{equation}
where $\delta > 0$ is as in Step 1. We introduce the stopping time 
\begin{equation}\label{eq:stopping_radial_step_2}
 \tau_2 = \inf\{t \ge \tau_1 \colon \eta_t = h_t = 0 \} \wedge \bar{\tau}.
\end{equation}
Let $\tilde{\Omega}_1 = \{h_{\tau_1} = \ell_\nu \sqrt{\rho_{\tau_1}}\} \subseteq \Omega$. For $\tau_1\leq t\leq \tau_2$, we set $U_t = 0$ and 
\begin{equation}\label{eq:V_control_radial_step_2}
 V_t(\omega) = \mathbf{1}_{\tilde{\Omega}_1}\times \left(\sqrt{\frac{\nu}{2}}\frac{h_t}{r_t\tilde{r}_t} - \frac{1}{\delta} \frac{\iota(r_0) \ell_\nu}{2 \sqrt{2\nu \eta_t}}[b(\tilde{r}_t) - b(r_t)]\right).
\end{equation}
This choice implies that $h_t = \delta \ell_\nu \sqrt{\eta_t}$ for all $\tau_1 \le t \le \tau_2$ whenever $\omega \in \tilde{\Omega}_1$. It then follows by \eqref{eq:eta} that for $\omega \in \tilde{\Omega}_1$ and all $\tau_1 \le t \le \tau_2$ we have 
\begin{equation} \label{eq:eta2}
\dee \eta_t = -\frac{1}{\delta^2}[b(r_t + \iota(r_0)h_t) - b(r_t)]\dt. 
\end{equation}
Now, since $h_{\tau_1} \le \sqrt{2\pi} \ell_\nu$ from the construction in Step 1 and the choice of $V_t$ in this step ensures $h_t$ is monotone decreasing for $\tau_1 \le t \le \tau_2$, we know that $h_t$ in \eqref{eq:eta2} satisfies $0 \le h_t \le \sqrt{2\pi} \ell_\nu$. Thus, we may apply both inequalities of Lemma~\ref{lem:technicalradial} in \eqref{eq:eta2} to conclude that for some $c_1 > 0$ there holds 
\begin{equation}
    \frac{\dee \eta_t}{\dee t} \leqc - \frac{\ell_\nu |b'(r_0 + \iota(r_0) \ell_\nu)|}{\delta} \sqrt{\eta_t} \le - \frac{c_1}{ \delta t_\nu} \sqrt{\eta_t}. 
\end{equation}
Solving this differential inequality and recalling \eqref{eq:Omega1purpose}, we have shown that there exists $C_1 > 0$ such that 
\begin{equation}\label{eq:t2-t1}
\tau_2 - \tau_1 \le C_1 \delta t_\nu \quad \text{and} \quad h_{\tau_2} = \eta_{\tau_2} = 0 \quad \forall \omega \in \Omega_1 \cap \{\bar{\tau} - \tau_0 \ge \delta (\sqrt{\pi}+C_1) t_\nu\}.
\end{equation}

\textbf{Step 3} (removing error between $\eta_{\tau_2}$ and $\rho_{\tau_2}$): Let
$$ \Omega_2 = \{ |\rho_{\tau_2}| \le \ell_\nu/r_0\} \cap \{h_{\tau_2} = 0\} \subseteq \Omega$$ and define the stopping time
$$ \tau_3 = \inf\{t \ge \tau_2: \rho_t = h_t = 0\} \wedge \bar{\tau}.$$
For $\tau_2 \le t \le \tau_3$ we set $V_t = 0$ and 
\begin{equation}
    U_t = \mathbf{1}_{\Omega_2} \frac{\mathrm{sgn}(\rho_{\tau_2})}{\sqrt{t_\nu}}.
\end{equation}
Then, since $h_t = 0$ for $t \ge \tau_2$ on the set $\Omega_2$, it follows from \eqref{eq:rho_radial_differential} that for $\omega \in \Omega_2$ and $\tau_2 \le t \le \tau_3$ we have
$$ \rho_t = \rho_{\tau_2} + \sqrt{\frac{2\nu}{t_\nu}} \int_{\tau_2}^t \frac{\mathrm{sgn}(\rho_{\tau_2})}{r_s} \dee s.  $$
From the lower bound in \eqref{eq:ellnuass} and the fact that $|\rho_{\tau_2}| \le \ell_\nu/r_0$ for $\omega \in \Omega_2$, it follows easily that for some $C_2 > 0$ we have
\begin{equation} \label{eq:t3-t2}
    \tau_3 - \tau_2 \le C_2 t_\nu \quad \text{whenever} \quad \omega \in \Omega_2.
\end{equation}
We set both controls to be zero for $t \ge \bar{\tau} \wedge \tau_3$.

\textbf{Step 4} (conclusion and cost of control): Let $T = (1+\sqrt{\pi} \delta + C_1 \delta + C_2)t_\nu$ and define 
\begin{equation} \label{eq:Omega0}
  \Omega_0 = \{\tau_0 \le t_\nu\} \cap \{\bar{\tau} - \tau_0 \ge (T-1)t_\nu\}.
\end{equation}
In view of \eqref{eq:Omega1purpose}, \eqref{eq:t2-t1}, and \eqref{eq:t3-t2}, the construction carried out in Steps 1-3 is such that
\begin{equation} 
(r_T, \theta_T) = (\tilde{r}_T, \tilde{\theta}_T) \quad \forall \omega \in \Omega_0 \cap \Omega_1 \cap \{|\rho_{\tau_2}| \le \ell_\nu/r_0\}.
\end{equation}
To estimate this probability, first recall that the procedure in Step 2 was such that for $\omega \in \Omega_0 \cap \Omega_1$ we have $\eta_{\tau_2} = 0$, $\tau_2 - \tau_1 \le C_1 \delta t_\nu$ and $0 \le h_t \le \sqrt{2\pi} \ell_\nu$ for all $\tau_1 \le t \le \tau_2$. Therefore, since $$\rho_{t} = \eta_{t} - \sqrt{2\nu}\int_{\tau_1}^t \frac{h_t}{r_t \tilde{r}_t}\dee W_t $$
for $\tau_1 \le t \le \tau_2$ and \eqref{eq:rdenominatorclarify} still holds for $\tau_1 \le t \le \tau_2$, by carrying out a computation similar to the estimate of $M_t$ from Step 1, we have 
\begin{align*}
    \E \mathbf{1}_{\Omega_0 \cap \Omega_1}|\rho_{\tau_2}|^2 &= 2 \nu \E \mathbf{1}_{\Omega_0 \cap \Omega_1} \left|\int_{\tau_1}^{\tau_2 \wedge (\tau_1 + C_1 \delta t_\nu)} \frac{\mathbf{1}_{|h_t| \le \sqrt{2\pi} \ell_\nu}h_t}{r_t \tilde{r}_t}\dee W_t \right|^2 \\ 
    & \le 2\nu \E \int_{\tau_1}^{\tau_2 \wedge (\tau_1 + C_1 \delta t_\nu)}\mathbf{1}_{|h_t| \le 4\pi \ell_\nu} \left|\frac{h_t}{r_t \tilde{r}_t}\right|^2 \dee t \\
    & \le 128 \pi \frac{\nu \ell_\nu^2}{(\ell_\nu + r_0)^4} \E\int_{\tau_1}^{\tau_2 \wedge (\tau_1 + C_1 \delta t_\nu)} \dt    \\
    & \le 128 \delta \pi C_1 \left(\frac{\ell_\nu}{\ell_\nu + r_0}\right)^4.
\end{align*}
It follows by Chebyshev's inequality that 
\begin{equation} \label{eq:Omega12}
    \P\left(\Omega_0 \cap \Omega_1 \cap \{|\rho_{\tau_2}| \le \ell_\nu/r_0\}\right) \ge \P(\Omega_0 \cap \Omega_1) - 128 \delta \pi C_1.
\end{equation}
Since $\P(\Omega_0)$ is bounded below uniformly in $\nu,\delta \in (0,1]$, we see that \eqref{eq:radialhit} holds when $\delta > 0$ is sufficiently small by \eqref{eq:Omega1prob} and \eqref{eq:Omega12}.

With $\delta > 0$ fixed so that \eqref{eq:radialhit} holds, it remains to check \eqref{eq:radial_cost} with $T$ as defined at the beginning of the previous paragraph. From the choice of controls in Steps 1-3, we have
        $$ \int_{0}^{T} (|U_t|^2 + |V_t|^2)\dt \le I_1 + I_2 + I_3, $$
where
$$ I_1 = \int_{\tau_0}^{\tau_1}\left( \sqrt{\frac{\nu}{2}} \frac{h_t}{r_t \tilde{r}_t} + \frac{1}{\delta \sqrt{t_\nu}}\right)^2\dt, $$
$$ I_2 = \mathbf{1}_{\{h_{\tau_1} = \ell_\nu \sqrt{\rho_{\tau_1}}\}}\times\int_{\tau_1}^{\tau_2} \left( \sqrt{\frac{\nu}{2}} \frac{h_t}{r_t \tilde{r}_t} + \frac{\sqrt{t_\nu}}{\delta \sqrt{\eta_t}} |b(\tilde{r}_t) - b(r_t)|\right)^2\dt, $$
and 
$$ I_3 = \mathbf{1}_{\Omega_2}\times\int_{\tau_2}^{\tau_3} \frac{1}{t_\nu}\dt. $$
From \eqref{eq:t3-t2}, it is clear that $I_3 \le C_2$. Next, the bound $I_1 \le C$ for some $C > 0$ follows easily from $\tau_1 \le \tau_0 + \sqrt{\pi} \delta t_\nu$, the upper bound in \eqref{eq:ellnuass}, and the fact that $|h_t| \le \sqrt{2\pi} \ell_\nu$ for $\tau_0 \le t \le \tau_1$. We lastly turn to $I_2$. By the construction in Step 2, on the set $\{h_{\tau_1} = \ell_\nu \sqrt{\rho_{\tau_1}}\} \subseteq \Omega$ we have $\tau_2 \le \tau_1 + C_1 \delta t_\nu$ and $0 \le h_t = \delta \ell_\nu \sqrt{\eta_t} \le \sqrt{2 \pi} \ell_\nu$ for all $\tau_1 \le t \le \tau_2$. The first term in $I_2$ is thus bounded in the same way as the corresponding term in $I_1$. For the second piece of $I_2$, we appeal to Lemma~\ref{lem:technicalradial} to obtain  
\begin{align*}
    \mathbf{1}_{\{h_{\tau_1} = \ell_\nu \sqrt{\rho_{\tau_1}}\}} \int_{\tau_1}^{\tau_2} \frac{t_\nu}{\delta^2 \eta_t} |b(r_t + \iota(r_0) h_t) - b(r_t)|^2 \dt & \le C t_\nu \int_{\tau_1}^{\tau_2 + C_1 \delta t_\nu} \ell_\nu^2 |b'(r_0 + \iota(r_0)\ell_\nu)|^2 \dt  \\ 
    & \le \frac{C}{t_\nu} \int_{\tau_1}^{\tau_2 + C_1 \delta t_\nu} \dt \le C.
\end{align*}
This proof is now complete. 

\end{proof}

\subsection{Modifications on the unit disk} \label{sec:radial2}

We now discuss the modifications of the proof given in Section~\ref{sec:radial1} needed to prove Theorem~\ref{thm:RadialGen} on the unit disk. The Neumann boundary condition in \eqref{eq:ADEpolarshear} corresponds to the condition that the diffusion $r_t$ in \eqref{eq:radialSDE} is reflected at the boundary $r = 1$ and is enforced by including a local time term in the SDE. In particular, the relevant SDE becomes 
\begin{equation} \label{eq:SDEdisk}
    \begin{cases}
        \dee \theta_t = -b(r_t)\dt + \frac{\sqrt{2\nu}}{r_t}\dee W_t, \\ 
    \dee r_t = \frac{\nu}{r_t}\dt + \sqrt{2\nu}\dee B_t  - \frac{1}{2}\dee L_t,
    \end{cases}
\end{equation}
where $L_t$, the local time of diffusion $r_t$ at $r = 1$, is a continuous, non-decreasing process satisfying $L_0 = 0$ and 
\begin{align*}
  L_t = \int_0^t \mathbf{1}_{\{r_s = 1\}} \dee L_s.
\end{align*}
See \cite{Stroock_Varadhan_1971,AndersonOrey,Tanaka_1979,Potsdam} for more background on SDEs with reflections realized
through a Skorokhod Problem using local times.
While we did not include SDEs with a local time term in Section~\ref{sec:probability}, the modifications from the $\R^2$ case needed to treat SDEs on the disk are actually quite minimal. The key point here is that the use of $\bar{\tau}$ in our proof on $\R^2$ implies that if the two initial conditions in \eqref{eq:SDEdisk} lie on a streamline away from a suitable diffusive layer near $r = 1$, then the control argument from Section~\ref{sec:radial1} immediately generalizes to couple trajectories that never reach the boundary with positive probability. This observation will be enough for the results of Section~\ref{sec:girsanov} to be sufficient. We will just need a statement that says solutions of \eqref{eq:SDEdisk} that start close to the boundary escape far enough into the bulk on an appropriate timescale, which is provided by the following lemma. 

\begin{lemma} \label{lem:escape} 
    Fix $0<\ell < 1/4$ and define $T$ through the relation $\ell = \sqrt{\nu T}$. There exist constants $C,c > 0$ that do not depend on $\nu$ or $\ell$ such that for every $r_0 \in [1-\ell,1]$ the solution $r_t$ of \eqref{eq:SDEdisk} with initial condition $r_0$ satisfies 
    $$ \P(1-3\ell \le r_{CT} \le 1-\ell) > c. $$
\end{lemma}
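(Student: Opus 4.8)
The plan is to strip both parameters $\nu$ and $\ell$ out of the problem by a parabolic rescaling that turns the $r$-component of \eqref{eq:SDEdisk} into a bounded perturbation of a one-dimensional reflected Brownian motion, and then to compare with the unperturbed reflected motion using the Girsanov argument of Section~\ref{sec:girsanov}. \textbf{Step 1 (rescaling).} Put $\tilde\rho_s = \ell^{-1}(1 - r_{sT})$, so $\tilde\rho_0 = \ell^{-1}(1-r_0)\in[0,1]$; a direct computation (a linear change of variables plus Brownian scaling, using $\nu T = \ell^2$) shows that $\tilde\rho$ solves the reflected SDE on $[0,\infty)$
\[
    \dee\tilde\rho_s = -\frac{\ell}{1-\ell\tilde\rho_s}\,\dee s \;-\; \sqrt2\,\dee\tilde B_s \;+\; \dee\tilde L_s ,
\]
with $\tilde B$ a standard Brownian motion and $\tilde L$ the reflecting local time of $\tilde\rho$ at $0$ (reflection of $r_t$ at $r=1$ becomes reflection of $\tilde\rho_s$ at $0$). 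Since $\ell<1/4$, whenever $\tilde\rho_s\le 2$ we have $1-\ell\tilde\rho_s\ge 1/2$, so the drift is bounded by $2\ell$; I will therefore work with a globally defined bounded Lipschitz function $\bar g$ with $\bar g\le 0$, $|\bar g|\le 2\ell$, and $\bar g(x) = -\ell/(1-\ell x)$ for $x\le 2$, whose reflected SDE has a pathwise unique strong solution agreeing with $\tilde\rho$ until $\tilde\rho$ first exceeds $2$. In these variables $\{1-3\ell\le r_{CT}\le 1-\ell\}\supseteq\{1\le\tilde\rho_C\le 3/2\}$, so it suffices to produce a fixed $C$ and a constant $c>0$, independent of $\nu$, $\ell$, and $\tilde\rho_0\in[0,1]$, with $\P\big(\tilde\rho_C\in[1,3/2],\ \sup_{[0,C]}\tilde\rho\le 2\big)\ge c$.

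\textbf{Step 2 (reflected Brownian comparison).} Let $\xi_s$ solve, driven by the same $\tilde B$, the equation $\dee\xi_s = -\sqrt2\,\dee\tilde B_s + \dee L^\xi_s$ with $\xi_0=\tilde\rho_0$, i.e. a speed-$\sqrt2$ Brownian motion reflected at $0$. By the Tanaka/reflection representation, $\xi_s$ has the law of $|\tilde\rho_0+\sqrt2\,\widehat W_s|$ for a standard Brownian motion $\widehat W$, so for the path event $E = \{w:\ \sup_{[0,C]}w\le 2,\ w_C\in[1,3/2]\}$ one has $\P(\xi\in E)\ge \P\big(\sqrt2\,\widehat W_s\in[-2-\tilde\rho_0,\,2-\tilde\rho_0]\ \forall s\le C,\ \sqrt2\,\widehat W_C\in[1-\tilde\rho_0,\,\tfrac32-\tilde\rho_0]\big)$. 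The right-hand side is a strictly positive, continuous function of $\tilde\rho_0$ on the compact set $[0,1]$, hence bounded below by a constant $c_1=c_1(C)>0$.

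\textbf{Step 3 (Girsanov and conclusion).} Following the proof of Proposition~\ref{thm:Girsanov}, set $\mathcal M_C = \exp\!\big(-\tfrac1{\sqrt2}\int_0^C\bar g(\xi_s)\,\dee\tilde B_s - \tfrac14\int_0^C\bar g(\xi_s)^2\,\dee s\big)$. Since $|\bar g|\le 2\ell\le\tfrac12$, Novikov's criterion gives $\E\mathcal M_C=1$, and writing $\mathcal M_C^{-1}$ as an exponential martingale times $\exp(\tfrac12\int_0^C\bar g(\xi_s)^2\dee s)\le e^{2\ell^2 C}\le e^{C/8}$ gives $\E[\mathcal M_C^{-1}]\le e^{C/8}$. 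Under $\mathbf Q = \mathcal M_C\,\P$, Girsanov's theorem makes $\xi$ a weak solution of the reflected SDE with drift $\bar g$ started at $\tilde\rho_0$ — the reflecting local time is a pathwise functional, hence unaffected by the change of measure, exactly as anticipated in Remark~\ref{rem:bounded} — so weak uniqueness gives $\P(\tilde\rho\in E) = \mathbf Q(\xi\in E) = \E_\P[\mathbf 1_{\{\xi\in E\}}\mathcal M_C]$, and since $E\subseteq\{\sup\le 2\}$ where $\bar g$ is the true drift, the event $\{\tilde\rho\in E\}$ genuinely forces $\tilde\rho_C\in[1,3/2]$. Cauchy--Schwarz then yields
\[
    c_1 \le \P(\xi\in E) = \E_\P\!\big[\mathbf 1_{\{\xi\in E\}}\mathcal M_C^{1/2}\,\mathcal M_C^{-1/2}\big] \le \big(\E_\P[\mathbf 1_{\{\xi\in E\}}\mathcal M_C]\big)^{1/2}\big(\E_\P[\mathcal M_C^{-1}]\big)^{1/2} \le \big(\P(\tilde\rho\in E)\big)^{1/2}\, e^{C/16},
\]
so $\P(\tilde\rho\in E)\ge c_1^2\,e^{-C/8}=:c>0$, and on this event $r_{CT}=1-\ell\tilde\rho_C\in[1-\tfrac32\ell,\,1-\ell]\subseteq[1-3\ell,1-\ell]$, which is the claim.

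\textbf{Expected difficulty.} There is no essential obstacle here: the whole point is to recognize that on the natural length scale $\ell$ and time scale $T=\ell^2/\nu$ the radial diffusion is a bounded perturbation ($O(\ell)$ drift) of a reflected Brownian motion, which is exactly what makes every constant uniform in $\nu$ and $\ell$. The two places needing a little care are (i) verifying that the reflecting local time passes harmlessly through both the rescaling and the Girsanov change of measure — immediate because the reflection is realized pathwise via the Skorokhod map, as noted in Remark~\ref{rem:bounded} — and (ii) the uniform-in-$\tilde\rho_0$ lower bound in Step 2, which is a routine Brownian estimate combined with compactness of $[0,1]$.
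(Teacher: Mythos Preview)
Your proof is correct, but the route differs from the paper's. The paper argues via PDE: it bounds the expected hitting time $\E\tau_1(r_0)$ of the level $1-2\ell$ by solving the mixed Dirichlet--Neumann problem $\nu(\frac{1}{r}\psi' + \psi'')=-1$ on $[1-2\ell,1]$, rescales the solution to an $\ell,\nu$-independent problem, runs an energy estimate to get $\|\psi\|_{L^\infty}\lesssim T$, applies Chebyshev, and then combines with the strong Markov property and a Brownian scaling bound on the exit time from $(1-3\ell,1-\ell)$. Your argument instead rescales the process itself to a reflected SDE on $[0,\infty)$ with drift of size $O(\ell)$ and then uses a Girsanov change of measure to compare with pure reflected Brownian motion, extracting the uniform lower bound by compactness of $[0,1]$ in the initial condition. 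Both approaches rest on the same parabolic rescaling; the paper's buys an explicit hitting-time bound via elementary elliptic estimates at the cost of invoking Lax--Milgram and regularity, while yours stays entirely probabilistic and is pleasantly in tune with the Girsanov machinery of Section~\ref{sec:girsanov}. The only points to be careful with in your write-up are exactly the ones you flag: that the Skorokhod map (hence the local time) is a pathwise functional so Girsanov applies unchanged, and that on the event $E$ the truncated drift $\bar g$ coincides with the true drift so $\{\tilde\rho^{\bar g}\in E\}=\{\tilde\rho\in E\}$; both are handled correctly.
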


\begin{proof}
For a general initial condition $r_0 \in (0,1]$ and $r_t$ the associated solution of the second equation in \eqref{eq:SDEdisk}, define the two stopping times 
\begin{equation} \label{eq:tau1hit}
    \tau_1(r_0) = \inf\{t \ge 0: r_t = 1-2\ell\} 
\end{equation}
and
\begin{equation} \label{eq:tau2hit}
    \tau_2(r_0) = \inf\{t \ge 0: r_t \not \in (1-3\ell, 1-\ell)\}. 
\end{equation}
We will show below that there exist $C_1,c_1 > 0$ that do not depend on $\nu$ or $\ell$ such that
\begin{equation} \label{eq:hitbound}
     \P\big(\tau_1(r_0) \le C_1T\big) > c_1 \quad \forall r_0 \in [1-\ell, 1].
\end{equation}
For now, we complete the proof assuming \eqref{eq:hitbound}. By the scaling properties of Brownian motion, it is easy to see that for some $c_2 > 0$ we have 
$$ \P(\tau_2(1-2\ell) \ge 4C_1 T) \ge c_2$$
Thus, for any $r_0 \in [1-\ell,1]$ we have 
$$ \P(1-3\ell \le r_{2C_1T} \le 1-\ell) \ge \P(\tau_1(r_0) \le C_1 T)\P(\tau_2(1-2\ell) \ge 4C_1T) \ge c_1 c_2 > 0.  $$

It remains to prove \eqref{eq:hitbound}. Consider the problem to find $\psi:[1-2\ell,1] \to \R$ such that 
\begin{equation} \label{eq:hittingPDE}
\begin{cases}
\nu \left(\frac{1}{r}\partial_r \psi + \partial_{rr} \psi\right) = -1 & \text{in }(1-2\ell,1), \\
\psi(1-2\ell) = 0, \\
\partial_r \psi(1) = 0.
\end{cases}
\end{equation}
Using the Lax-Milgram theorem, one can construct a unique $\psi \in H^1([1-2\ell,1];\R)$ that solves the variational formulation of \eqref{eq:hittingPDE}. By elliptic regularity, this unique solution is actually in $H^2$, and then from Sobolev embedding and the PDE itself one sees that $\psi \in C^2([1-2\ell,1];\R)$ is actually a classical solution of \eqref{eq:hittingPDE}. It is well known (see e.g. \cite[Theorem 3.1.3]{Potsdam}) that this classical solution is connected to \eqref{eq:SDEdisk} via the relationship 
\begin{equation} \label{eq:psihit}
\psi(r) = \E \tau_1(r).
\end{equation}
To estimate $\psi$, we define the new variable $$\tilde{r} = \frac{r-(1-2\ell)}{2\ell}$$ and let $\Psi(\tilde{r}) = \frac{\psi(r)}{4T}.$ 
Defining $c_\ell = \frac{1}{2\ell} - 1$, it is easy to check that $\Psi:[0,1] \to \R$ solves 
\begin{equation}\label{eq:rescaledhittingPDE}
\begin{cases}
  \frac{1}{\tilde{r} + c_\ell} \partial_{\tilde{r}}\Psi + \partial_{\tilde{r}\tilde{r}} \Psi = -1 & \text{in }(0,1), \\
  \Psi(0) = 0, \\
  \partial_{\tilde{r}}\Psi(1) = 0.
  \end{cases}
\end{equation}
The first equation above can be re-written as 
$$ \partial_{\tilde{r}}\big((\tilde{r} + c_\ell)\partial_{\tilde{r}}\Psi\big) = -(\tilde{r} + c_\ell) $$
from which a simple energy estimate gives 
$$ c_\ell \|\partial_{\tilde{r}}\Psi\|_{L^2([0,1])}^2 \le (1+c_\ell)\|\Psi\|_{L^2([0,1])}. $$
By the Poincar\'{e} inequality and the fact that $c_\ell \ge 1$, it follows that $\|\partial_{\tilde{r}} \Psi\|_{L^2([0,1])} \le C_2$ for a constant $C_2$ that is independent of $\nu$ and $\ell$. Thus, by the 1d Sobolev embedding, $\|\psi\|_{L^\infty([1-\ell,1])} \le C_3 T$ for some $C_3 \ge 1$ and the desired estimate then follows by \eqref{eq:psihit} and Chebyshev's inequality.  
\end{proof}

With Lemma~\ref{lem:escape} in hand, the Theorem~\ref{thm:RadialGen} on the disk is an easy corollary of the proof given in Section~\ref{sec:radial1}.

\begin{proof}[Proof of Theorem~\ref{thm:RadialGen} when $D = \mathbb{D}$]
Let $\Pt_t$ denote the Markov semigroup generated by $(\theta_t,r_t)$ in \eqref{eq:SDEdisk}. We need to show that there exist $\epsilon, C > 0$ such that for every $r_0 \in (0,1]$ and $\theta_0, \tilde{\theta}_0 \in [0,2\pi)$ we have 
\begin{equation} \label{eq:diskgoal}
   \|\delta_{(r_0, \theta_0)}\Pt_T - \delta_{(r_0, \tilde{\theta}_0)}\Pt_T\|_{TV} \le 1-\epsilon
\end{equation}
for some $T \le C t_\nu(r_0)$. For a constant $C_0 \gg 1$ to be chosen below, we will consider separately the cases $r_0 \le 1- C_0 \nu^{1/3}$ and $r_0 \in (1-C_0\nu^{1/3}, 1]$. 

\textbf{Case 1}: Suppose that $r_0 \le 1 - C_0 \nu^{1/3}$. We first claim that if $C_0$ is large enough, then in this case we have 
\begin{equation} \label{eq:boundaryaway}
    10 \ell_\nu(r_0) \le \frac{1 - r_0}{2}
\end{equation}
for all $\nu$ sufficiently small. Indeed, by Assumption~\ref{ass:R2_general} we have $\lim_{r \to 1} b'(r) \neq 0$ and hence there exist $\delta > 0$ and $C_1 \ge 1$ such that for all $r \in [1-\delta,1]$ we have 
\begin{equation} \label{eq:nu13equiv}
C_1^{-1} \nu^{1/3} \le t_\nu(r) \le C_1 \nu^{1/3} \quad \text{and} \quad C_1^{-1} \nu^{1/3} \le \ell_\nu(r) \le C_1 \nu^{1/3}. 
\end{equation}
For $r_0 \in [1-\delta,1-C_0\nu^{1/3})$, the bound \eqref{eq:boundaryaway} follows by \eqref{eq:nu13equiv} provided that $C_0 > 20C_1$. If instead $r_0 < 1-\delta$, then \eqref{eq:boundaryaway} clearly holds for all $\nu$ sufficiently small due to $\lim_{\nu \to 0} \sup_{r \in [0,1]} \ell_\nu(r) = 0$.

With \eqref{eq:boundaryaway} established, we now fix $\theta_0, \tilde{\theta}_0 \in [0,2\pi)$ and seek to prove \eqref{eq:diskgoal}. First, we extend $b:[0,1] \to \R$ to a smooth, globally Lipschitz function $\bar{b}:[0,\infty) \to \R$. Then, for adapted controls $U_t$ and $V_t$, we let $\xi,\tilde{\xi}:[0,\infty) \times \R^2 \times \Omega \to \R^2$ denote the flow maps of the SDEs in \eqref{eq:radialSDEs} with $b$ replaced by $\bar{b}$ (recall these SDEs are viewed as equations on $\R^2$). Writing $\xi_r$ and $\xi_\theta$ for the radial and angular components of $\xi$, we further define 
$$ \tau(\omega) = \inf\left\{t \ge 0: \xi_r(t,r_0,\theta_0,\omega) = \frac{1+r_0}{2}\right\}.$$
Due to the stopping time $\bar{\tau}$ in our control argument from the
proof in Section~\ref{sec:radial1}, the set of random trajectories
that we considered to achieve \eqref{eq:radialhit} were such that $r_t
\le r_0 + 6\ell_\nu(r_0)$ for all $t \in [0,T]$. Therefore, due to
\eqref{eq:boundaryaway}, our proof in the $\R^2$ case implies that
there exist a choice of controls $U_t, V_t$ and constants $C,c > 0$
that do not depend on $r_0, \theta_0, \tilde{\theta_0}$, or $\nu$ such
that \eqref{eq:radial_cost} holds and for some $T \le Ct_\nu(r_0)$ there is $\Omega_0 \subseteq \Omega$ satisfying
\begin{equation} \label{eq:Omega0boundary}
    \P(\Omega_0) \ge c \quad \text{and} \quad \Omega_0 \subseteq \{\omega \in \Omega: \xi(T,r_0,\theta_0,\omega) = \tilde{\xi}(T,r_0,\tilde{\theta_0},\omega) \text{ and }\tau > T\}.
\end{equation}
Now define the measures $\lambda_1$ and $\lambda_2$ on $\mathbb{D}$ by
$$ \lambda_1(A) = \E \mathbf{1}_{\Omega_0}\mathbf{1}_A(\xi(T,r_0,\theta_0,\;\cdot\;)) \quad \text{and} \quad \lambda_2(A) = \E \mathbf{1}_{\Omega_0}\mathbf{1}_A(\xi(T,r_0,\tilde{\theta}_0,\;\cdot\;)) $$
and let $\Xi:[0,\infty) \times [0,1] \times [0,2\pi) \times \Omega \to [0,1]\times [0,2\pi)$ denote the flow map of \eqref{eq:SDEdisk}. Since $\Omega_0 \subseteq \{\tau > T\}$, for almost every $\omega \in \Omega_0$ we have $\Xi(t,r_0,\theta_0,\omega) = \xi(t,r_0,\theta_0,\omega)$ and $\Xi(t,r_0,\tilde{\theta_0},\omega) = \xi(t,r_0,\tilde{\theta_0},\omega)$ for all $t \in [0,T]$. We thus have $\delta_{(r_0,\theta_0)}\Pt_T \ge \lambda_1$ and $\delta_{(r_0,\tilde{\theta}_0)}\Pt_T \ge \lambda_2$. Since we also have \eqref{eq:Omega0boundary}, the bound \eqref{eq:diskgoal} now follows from the proof of Proposition~\ref{thm:Girsanov}.

\textbf{Case 2}: With $C_0$ fixed as in Case 1 we now suppose that $r_0 \in (1-C_0\nu^{1/3},1]$. In this case, by \eqref{eq:nu13equiv}, it suffices to prove \eqref{eq:diskgoal} for some $T \le C\nu^{-1/3}$. Moreover, by Case 1, there exist $\epsilon_0 > 0$ and $T_0 \leqc \nu^{-1/3}$ such that 
\begin{equation} \label{eq:case1result}
    \sup_{\theta, \tilde{\theta} \in [0,2\pi)}\|\delta_{(r,\theta)}\Pt_{T_0} - \delta_{(r,\tilde{\theta})}\Pt_{T_0}\|_{TV} \le 1- \epsilon_0 \quad \forall r \in [1-3C_0\nu^{1/3}, 1-C_0\nu^{1/3}].
\end{equation}
Let $A = [1-3C_0\nu^{1/3}, 1- C_0 \nu^{1/3}] \times [0,2\pi)$.
By Lemma~\ref{lem:escape} applied with $\ell = C_0\nu^{1/3}$, there exist $C_2, c_2 > 0$ that do not depend on $r_0$ or $\nu$ and $T_1 \le C_2 \nu^{-1/3}$ such that
$$ \delta_{(r_0,\theta_0)}\Pt_{T_1}(A) \ge c_2 \quad \text{and} \quad \delta_{(r_0,\tilde{\theta_0})}\Pt_{T_1}(A) \ge c_2.  $$
Let $T = T_0 + T_1$ and define the measures $\mu_1 = \delta_{(r_0,\theta_0)}\Pt_{T_1}$ and $\mu_2 = \delta_{(r_0, \tilde{\theta}_0)}\Pt_{T_1}$. We also define the restricted measures $\mu_j^{(A)}(\cdot) = \mu_j(A \cap \cdot)$. We then have 
\begin{align*} 
\|\delta_{(r_0,\theta_0)}\Pt_{T} - \delta_{(r_0,\tilde{\theta}_0)}\Pt_{T}\|_{TV} &= \|\mu_1 \Pt_{T_0} - \mu_2 \Pt_{T_0}\|_{TV} \\ 
& \le  \|\mu^{(A)}_1\Pt_{T_0} - \mu_2^{(A)}\Pt_{T_0}\|_{TV} + \|\mu_1 - \mu_1^{(A)}\|_{TV} + \|\mu_2 - \mu_2^{(A)}\|_{TV}.\\
&\leq \|\mu^{(A)}_1\Pt_{T_0} - \mu_2^{(A)}\Pt_{T_0}\|_{TV}  +
                                                                                                                               \tfrac12\big(\mu_1(A^c)
                                                                                                                               +\mu_2(A^c)\big)
\end{align*}
Since $\mu^{(A)}_1$ and $\mu^{(A)}_2$ have the same $r$-marginal
distributions, it follows by Proposition~\ref{thm:marginal measures} and \eqref{eq:case1result} that 
$$ \|\mu^{(A)}_1\Pt_{T_0} - \mu_2^{(A)}\Pt_{T_0}\|_{TV} \le (1-\epsilon_0)\|\mu_1^{(A)} - \mu_2^{(A)}\|_{TV} \le (1-\epsilon_0)\mu_1(A). $$
Combining this bound with the previous ones and the fact that
$\mu_1(A^c)=\mu_2(A^c)=1-\mu_1(A)$, we obtain 
$$ \|\delta_{(r_0,\theta_0)}\Pt_{T} - \delta_{(r_0,\tilde{\theta}_0)}\Pt_{T}\|_{TV}\le (1-\epsilon_0)\mu_1(A) + 1-\mu_1(A) \le 1-\epsilon_0\mu_1(A) \le 1- \epsilon_0 c_2. $$
\end{proof}
  
\section{Proof of local enhanced dissipation estimates on $\T^2$} \label{sec:local}    
Let $b\colon\T \to \R$ denote a function satisfying Assumption~\ref{Ass:classical}, $t_\nu\colon\T \to (0,\infty)$ be the associated local timescale defined in \eqref{eq:tnuintro}, and $\Pt_t$ the Markov semigroup generated by \eqref{eq:SDE2} taken as an equation on $\T^2$. The proof of Theorem~\ref{thm:T2general} given earlier shows that there exist $C_0 \ge 1$ and $\epsilon \in (0,1)$ such that for any $y \in \T$ we have 
\begin{equation} \label{eq:thm1consequence}
\sup_{x,\tilde{x}}\|\delta_{(x,y)}\Pt_t - \delta_{(\tilde{x},y)}\Pt_t\|_{TV} \le 1-\epsilon, \quad \text{where} \quad t = C_0 t_\nu(y).
\end{equation}
In this section, we will iterate \eqref{eq:thm1consequence} by using Proposition~\ref{thm:marginal measures} to prove Theorem~\ref{thm:T2local}. To this end first observe that, in view of \eqref{eq:streamlineheuristic}, \eqref{eq:tnuAss2}, and Corollary~\ref{cor:finitevanish}, to prove Theorem~\ref{thm:T2local} it is sufficient to show that there exist constants $C,c > 0$ such that for every $y \in \T$ we  have 
\begin{equation}\label{eq:localgoal}
    \sup_{x, \tilde{x} \in \T}\|\delta_{(x,y)}\Pt_T - \delta_{(\tilde{x},y)}\Pt_T\|_{TV} \le C \nu^{c|\log \nu|} \quad \text{for some} \quad T \le C(1+|\log \nu|^{4N})t_\nu(y),
\end{equation}
where $N$ denotes the maximal order of vanishing of $b'$ at the
critical points of $b$. Our goal in the remainder of the section is
thus to prove \eqref{eq:localgoal}. 

We start with two simple lemmas. The first one is essentially just an estimate for the 1d periodized heat kernel and will be used to control how much mass of the measure $\delta_{(x,y)}\Pt_t - \delta_{(\tilde{x},y)}\Pt_t$ has diffused away from the streamline containing $(x,y)$.

\begin{lemma} \label{lem:heat}
    Fix $y \in \T$ and $R \ge 1$. Let $K_t = \{y' \in \T: |y-y'| \ge R\sqrt{\nu t}\}$, where $|\cdot|$ denotes the distance function on $\T$. Then, for any $x \in \T$ and $t \ge 0$ we have 
    $$ \delta_{(x,y)}\Pt_t(\T \times K_t) \le \sqrt{2}e^{-R^2/8}. $$
\end{lemma}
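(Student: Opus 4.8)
The plan is to reduce the bound to a one‑dimensional Gaussian tail estimate for the $y$‑component of the process, which is completely decoupled from the $x$‑dynamics. Recall from \eqref{eq:SDE2} and \eqref{eq:periodizedtransition} that under $\delta_{(x,y)}\Pt_t$ the $y$‑coordinate of the process is $\pi(y+\sqrt{2\nu}B_t)$, where $B_t$ is a standard real Brownian motion, $\pi\colon\R\to\T$ is the canonical projection, and we identify the point $y\in\T$ with its representative in $[0,1)$; in particular this marginal law depends neither on $x$ nor on $b$. Hence, with $|\cdot|$ the distance function on $\T$,
\[
  \delta_{(x,y)}\Pt_t(\T\times K_t) \;=\; \P\big(\,\big|\pi(y+\sqrt{2\nu}B_t)-y\big|\ge R\sqrt{\nu t}\,\big).
\]
We may assume $t>0$, the statement being understood for positive times.

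Next I would use that $\pi$ is $1$‑Lipschitz, i.e.\ $|\pi(a)-\pi(b)|\le|a-b|$ for $a,b\in\R$, so that $\big|\pi(y+\sqrt{2\nu}B_t)-y\big|=\big|\pi(y+\sqrt{2\nu}B_t)-\pi(y)\big|\le\sqrt{2\nu}\,|B_t|$. Since $B_t/\sqrt{t}\sim N(0,1)$, this gives
\[
  \delta_{(x,y)}\Pt_t(\T\times K_t)\;\le\;\P\big(\sqrt{2\nu}\,|B_t|\ge R\sqrt{\nu t}\big)\;=\;\P\big(|Z|\ge R/\sqrt{2}\big),\qquad Z\sim N(0,1).
\]

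To conclude, I would invoke the elementary one‑tailed Gaussian bound $\P(Z\ge a)\le\tfrac12 e^{-a^2/2}$ for $a\ge 0$ (which follows in a line, e.g.\ by noting that $g(a):=\tfrac12 e^{-a^2/2}-\P(Z\ge a)$ vanishes at $a=0$ and as $a\to\infty$ and has $g'$ changing sign only once on $(0,\infty)$, hence $g\ge 0$), so that $\P(|Z|\ge a)\le e^{-a^2/2}$. Taking $a=R/\sqrt{2}$ yields $\delta_{(x,y)}\Pt_t(\T\times K_t)\le e^{-R^2/4}$, and since $e^{-R^2/8}\le 1\le\sqrt{2}$ we get $e^{-R^2/4}=e^{-R^2/8}e^{-R^2/8}\le\sqrt{2}\,e^{-R^2/8}$, which is the claim (indeed the slightly stronger bound $e^{-R^2/4}$). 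There is no genuine obstacle here; the only points needing a modicum of care are recording that the $y$‑marginal of $\delta_{(x,y)}\Pt_t$ is exactly the periodized heat flow (immediate from the structure of \eqref{eq:SDE2}), the $1$‑Lipschitz comparison of the torus and real‑line distances, and using a tail bound with the factor $\tfrac12$ on a single tail, which is what makes the stated constant hold for all $R\ge 1$ rather than only for $R$ large.
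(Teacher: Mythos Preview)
Your proof is correct and follows essentially the same route as the paper: both reduce to the same one-dimensional Gaussian tail estimate for the decoupled $y$-marginal, you via the $1$-Lipschitz property of the projection $\pi$, the paper via the pointwise comparison $\Phi_y\ge\phi$ of the periodized and whole-line heat kernels. Your tail bound $\P(|Z|\ge a)\le e^{-a^2/2}$ is in fact slightly sharper than the $\sqrt{2}e^{-R^2/8}$ the paper records, so your argument even yields the stronger estimate $e^{-R^2/4}$.
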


\begin{proof}
    We have that 
    \begin{equation}\label{eq:heat_representation}
        \delta_{(x,y)} \Pt_t(\T \times K_t) = \int_{K_t} \Phi_y(t,y')\dee y',
    \end{equation}
    where $\Phi_y(t,y')$ is the fundamental solution of the heat equation on $\T$ starting from the point $y$. By comparing with the solution to the 1d heat equation on the whole space, we see that 
    $$ \int_{\T \setminus K_t} \Phi_y(t,y')\dee y' \ge \frac{1}{\sqrt{4\pi \nu t}}\int_{|z|< R\sqrt{\nu t}} e^{-\frac{z^2}{4\nu t}} \dee z = 1 - \frac{1}{\sqrt{\pi}} \int_{|z| \ge R/2} e^{-z^2}\dee z \ge 1 - \sqrt{2}e^{-R^2/8}.$$
    Thus, 
    $$ \int_{K_t} \Phi_y(t,y')\dee y' = 1 - \int_{\T\setminus K_t} \Phi_y(t,y')\dee y' \le \sqrt{2}e^{-R^2/8},$$
    which completes the proof.
\end{proof}

Next, we have a lemma that says the local timescale $t_\nu$ is roughly
constant on suitably sized spatial intervals, provided that one is not too close to the critical points.

\begin{lemma}\label{lem:timescalerelation}
Let $\{y_i\}_{i=1}^M$ denote the critical points of a function $b\colon \T \to \R$ satisfying Assumption~\ref{Ass:classical} and let $k_i \ge 1$ be the order of vanishing of $b'$ at the point $y_i$. Let $\ell_{\nu} = \sqrt{\nu t_\nu}$ and for each $y \in \T$ define the interval 
$$ I(y) = \big(y - |\log \nu|^2 \ell_{\nu}(y), y + |\log \nu|^2 \ell_{\nu}(y)\big). $$
There is a constant $C \ge 1$ such that for all $\nu$ sufficiently small and $y \not \in \cup_{i=1}^M I(y_i)$ we have
$$ \sup_{y' \in I(y)} t_\nu(y') \le C t_\nu(y).$$
\end{lemma}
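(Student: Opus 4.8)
The claim is that the local timescale $t_\nu$ is, up to a universal constant, constant on each interval $I(y)$ of radius $|\log\nu|^2 \ell_\nu(y)$ around $y$, as long as $y$ stays outside the slightly-enlarged neighborhoods $I(y_i)$ of the critical points. The natural approach is to reduce to Lemma~\ref{lem:technical1} (here valid with $m=1$, so that the statement holds for \emph{all} $y\in\T$), which already tells us that $t_\nu(y)\approx (\ell_\nu(y)|b'(y+\iota(y)\ell_\nu(y))|)^{-1}$ and that $|b'|$ varies by at most a factor $C$ on the scale $\ell_\nu(y)$. The point is then to propagate this local-near-constancy of $t_\nu$ from scale $\ell_\nu(y)$ out to the larger scale $|\log\nu|^2\ell_\nu(y)$ by a chaining/bootstrap argument, using the fact that away from the critical points the diffusive length $\ell_\nu$ itself cannot change too quickly.

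\textbf{Step 1 (comparable heights have comparable $\ell_\nu$ and $|b'|$).} First I would fix $y\not\in\bigcup_i I(y_i)$ and let $y'\in I(y)$. By hypothesis there are no critical points of $b$ strictly between $y$ and $y'$ once $\nu$ is small (since $|y-y'|\le |\log\nu|^2\ell_\nu(y)\ll 1$ and $y$ is at distance at least $|\log\nu|^2\ell_\nu(y_i)$ from each $y_i$; a short argument comparing $\ell_\nu(y)$ and $\ell_\nu(y_i)$ is needed here, using Lemma~\ref{lem:technical1} and the definition of $t_\nu$). Hence $\iota$ can be taken constant (say $=1$) between $y$ and $y'$, and $h\mapsto |b(y+h)-b(y)|$ is monotone on the relevant range. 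The key quantitative input is that $|b'|$ is essentially monotone and slowly varying in this region; more precisely, one shows that on the interval between $y$ and $y'$ the ratio $|b'(\zeta_1)|/|b'(\zeta_2)|$ is bounded by $C^{|\log\nu|^2/\,(\text{number of }\ell_\nu\text{-steps})}$-type factors — but this naive estimate is far too lossy, so instead one argues directly from the structure of Assumption~\ref{Ass:classical}.

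\textbf{Step 2 (exploit the polynomial structure near critical points).} The cleanest route is to use that under Assumption~\ref{Ass:classical}, for $y$ in the "bulk" region $\ell_{\nu,i}\le |y-y_i|\ll 1$ one has $|b'(y)|\approx |y-y_i|^{k_i}$ and $t_\nu(y)\approx \nu^{-1/3}|y-y_i|^{-2k_i/3}$, $\ell_\nu(y)\approx \nu^{1/3}|y-y_i|^{-k_i/3}$ (Taylor's theorem, cf. \eqref{eq:weightequiv}). If $y\not\in\bigcup_i I(y_i)$, then $|y-y_i|\gtrsim |\log\nu|^2\ell_\nu(y_i)\approx |\log\nu|^2\ell_{\nu,i}$, which is much larger than $\ell_\nu(y)$; indeed one checks $|\log\nu|^2\ell_\nu(y)\le \tfrac12|y-y_i|$ for $\nu$ small (this is exactly the kind of estimate appearing in the proof of Case~1 of the disk argument, and in Lemma~\ref{lem:timescalerelation}'s intended use). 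Consequently $I(y)$ stays in the bulk region $\{|y'-y_i|\gtrsim |y-y_i|\}$, where $|y'-y_i|\approx|y-y_i|$ uniformly over $y'\in I(y)$, and therefore $t_\nu(y')\approx \nu^{-1/3}|y'-y_i|^{-2k_i/3}\approx \nu^{-1/3}|y-y_i|^{-2k_i/3}\approx t_\nu(y)$. Away from \emph{all} critical points (i.e. where $|b'|\approx 1$ on $I(y)$) the conclusion is immediate since $t_\nu\approx \nu^{-1/3}$ there. Taking the max over the finitely many critical points $y_i$ closest to $I(y)$ finishes the bound.

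\textbf{Main obstacle.} The delicate point is Step~2's geometric claim that $I(y)$ does not come within a constant factor of any critical point, i.e. that $|\log\nu|^2\ell_\nu(y)\le \tfrac12 \operatorname{dist}(y,\{y_i\})$ whenever $y\not\in\bigcup_i I(y_i)$. This is not automatic: it requires comparing $\ell_\nu(y)$ with $\ell_\nu(y_i)=\ell_{\nu,i}$, and near a critical point $\ell_\nu$ is itself varying. One resolves it self-referentially: if $y\notin I(y_j)$ for the nearest critical point $y_j$, then $|y-y_j|>|\log\nu|^2\ell_{\nu,j}$; combined with $\ell_\nu(y)\approx \nu^{1/3}|y-y_j|^{-k_j/3}\le \nu^{1/3}\ell_{\nu,j}^{-k_j/3}(|\log\nu|^2)^{-k_j/3}\approx \ell_{\nu,j}|\log\nu|^{-2k_j/3}$, one gets $|\log\nu|^2\ell_\nu(y)\lesssim \ell_{\nu,j}|\log\nu|^{2-2k_j/3}$; for $k_j\ge 1$ this is $\lesssim \ell_{\nu,j}|\log\nu|^{4/3}\ll |y-y_j|$ when $k_j\ge 3$, and one handles $k_j\in\{1,2\}$ by the same kind of computation with a slightly more careful bookkeeping of powers of $|\log\nu|$ (the exponent $|\log\nu|^2$ in $I(y)$ is generous enough). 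Once this separation is in hand, everything else is routine Taylor-expansion bookkeeping, which I would relegate to the appendix as the paper does for its other technical lemmas.
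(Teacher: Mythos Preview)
Your proposal is correct and follows essentially the same route as the paper: reduce to the polynomial asymptotics $\ell_\nu(y)\approx \nu^{1/3}|y-y_i|^{-k_i/3}$ near each critical point, use $|y-y_i|\ge |\log\nu|^2\ell_{\nu,i}$ to deduce $\ell_\nu(y)\lesssim |\log\nu|^{-2k_i/3}\ell_{\nu,i}$, and conclude that the radius $|\log\nu|^2\ell_\nu(y)$ of $I(y)$ is $o(|y-y_i|)$, so that $|y'-y_i|\approx |y-y_i|$ throughout $I(y)$. Your worry about small $k_j$ in the ``main obstacle'' is unnecessary: since $2-2k_j/3<2$ for every $k_j\ge 1$, the bound $|\log\nu|^2\ell_\nu(y)\lesssim |\log\nu|^{2-2k_j/3}\ell_{\nu,j}\ll |\log\nu|^2\ell_{\nu,j}\le |y-y_j|$ goes through uniformly with no case distinction.
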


\begin{proof}
    Here, for two positive numbers $a$ and $b$ we will write $a \approx b$ to mean that $C^{-1} a \le b \le Cb$ for some $C \ge 1$ that does not depend on $\nu$. Due to the relationship between $t_\nu$ and $\ell_\nu$, it is sufficient to show that for some $C \ge 1$ and all $y \not \in \cup_{i=1}^M I(y_i)$ we have
    \begin{equation}\label{eq:ellnutechnical1}
        \sup_{y' \in I(y)} \ell_\nu(y') \le C \ell_\nu(y).
    \end{equation}
    From the discussion preceding the statement of Theorem~\ref{thm:T2local}, we know that 
\begin{equation}\label{eq:ellnutechnical2}
    \ell_\nu(y) \approx \nu^{1/3}|b'(y)|^{-1/3}
    \end{equation} 
    whenever $y \not \in \cup_{i=1}^M I(y_i)$. Due to \eqref{eq:ellnutechnical2} and $\lim_{\nu \to 0} \sup_{y \in \T} |\log \nu|^2 \ell_\nu(y) = 0$, for any fixed $0 \le \delta \ll 1$ the bound \eqref{eq:ellnutechnical1} clearly holds if $y$ is such that $|y-y_i| \ge \delta$ for every $1 \le i \le M$.  Suppose now instead that $|\log \nu|^2 \ell_\nu(y_i)\le |y-y_i| \le \delta$ for some $i$. Assuming $\delta$ is small enough we have
    \begin{equation} \label{eq:ellnutechnical3}
    \ell_\nu(y) \approx \nu^{1/3}|y-y_i|^{-\frac{k_i}{3}}. 
    \end{equation}
    Since $\ell_\nu(y_i) \approx \nu^{\frac{1}{k_i+3}}$, it follows from \eqref{eq:ellnutechnical3} that 
$$ \ell_\nu(y) \le C_1 |\log \nu|^{-\frac{2k_i}{3}} \ell_\nu(y_i) $$
    for some $C_1 > 0$. It is clear then that for all $\nu$ sufficiently small we have 
    $$ \inf_{y' \in I(y)}|y'-y_i| \approx |y-y_i|, $$
    so that \eqref{eq:ellnutechnical1} follows from \eqref{eq:ellnutechnical3}.
\end{proof}

We are now ready to give the proof of Theorem~\ref{thm:T2local}. In the following, for a (possibly signed) measure $\mu$ on $\T^2$ and a measurable set $A \subseteq \T^2$ we write $\mu_A$ for its restriction to $A$, defined by 
\begin{equation} \label{eq:signedmeasure}
\mu_A(B) = \mu(A \cap B)
\end{equation}
for all $B \in \mathcal{B}(\T^2)$.

\begin{proof}[Proof of Theorem~\ref{thm:T2local}]
Recall that it suffices to prove \eqref{eq:localgoal}. Fix $x, \tilde{x}, y \in \T$. We will split the proof into cases depending on whether or not $y \in \cup_{i=1}^M I(y_i)$, where $y_i$ and $I(y_i)$ are both as in Lemma~\ref{lem:timescalerelation}.

\vspace{0.2cm}
\noindent \textbf{Case 1}: Suppose that $y \not \in \cup_{i=1}^M I(y_i)$. Let $A = \T \times I(y)$ and $t_\nu^* = C_0\sup_{y' \in I(y)}t_\nu(y')$, where $C_0$ is as in \eqref{eq:thm1consequence}. Let $\{\mu_i\}_{i \ge 0}$ and $\{\eta_i\}_{i \ge 0}$ denote the sequences of measures defined by
\begin{equation}\label{eq:expanding_measures_inside}
        \mu_i = 
        \begin{cases}
            \delta_{(x, y)} - \delta_{(\tilde{x}, y)} & i = 0 \\
            (\mu_{i - 1}\mathcal{P}_{t_\nu^*})_{A} & i \geq 1
        \end{cases}
        \quad \text{and} \quad 
        \eta_i = 
        \begin{cases}
            0 & i = 0 \\
            (\mu_{i - 1}\mathcal{P}_{t_\nu^*})_{A^c} & i \geq 1,
        \end{cases}
    \end{equation}
    where we have used the notation introduced in \eqref{eq:signedmeasure}. The definitions above are such that for $i \ge 1$ we have $\mu_{i-1} \Pt_{t_\nu^*} = \mu_i + \eta_i$
and it then follows easily that for any $n \ge 1$ there holds
\begin{equation}\label{eq:local1}   \delta_{(x,y)}\mathcal{P}_{nt_\nu^\ast}- \delta_{(\tilde{x},y)}\mathcal{P}_{nt_\nu^\ast} = \mu_n + \sum_{i = 1}^n\eta_i\mathcal{P}_{(n - i)t_\nu^\ast}.
    \end{equation}
By construction, for each $i \ge 0$, $\mu_i$ is a signed measure with $\mu_i(\T^2) = 0$, zero $y$-marginal distribution, and $\|\mu_i\|_{TV(A^c)} = 0$. Therefore, by \eqref{eq:thm1consequence} and Proposition~\ref{thm:marginal measures}, for every $i \ge 1$ we have 
\begin{equation}
  \|\mu_{i}\|_{TV} \le \|\mu_{i-1} \Pt_{t_\nu^*}\|_{TV} \le (1-\epsilon)\|\mu_{i-1}\|_{TV}. 
\end{equation}
Let $n_0$ be the first natural number greater than or equal to $|\log \nu|^2$. Iterating the bound above for $1 \le i \le n_0$ and using $\|\mu_0\|_{TV} = 1$ we obtain 
\begin{equation}\label{eq:local2}
        \|\mu_{n_0}\|_{TV} \leq (1 - \epsilon)^{n_0} \le \nu^{c_1|\log \nu|}
    \end{equation}
    for some $c_1 > 0$. To complete the proof, we still need to bound the sum in \eqref{eq:local1}. We will show that for some $C_1 > 0$ we have  \begin{equation}\label{eq:eta_bound}
    \|\eta_i\|_{TV} \leq 2\sqrt{2} e^{-\frac{1}{8C_1} |\log \nu|^2}
\end{equation}
for every $1 \le i \le n_0$. Accepting this estimate for the moment, putting \eqref{eq:eta_bound} and \eqref{eq:local2} into \eqref{eq:local1} applied with $n = n_0$ we find 
\begin{equation} \label{eq:localfinal} \|\delta_{(x,y)}\mathcal{P}_{n_0t_\nu^\ast}- \delta_{(\tilde{x},y)}\mathcal{P}_{n_0t_\nu^\ast}\|_{TV} \leq C|\log\nu|^2\nu^{c|\log\nu|} 
\end{equation}
for some constants $C,c > 0$. The fact that \eqref{eq:localfinal} implies \eqref{eq:localgoal} follows from Lemma~\ref{lem:timescalerelation} and the definitions of $n_0$ and $t_\nu^*$.

All that remains now is to prove \eqref{eq:eta_bound}. We define the operator $\Theta$ to act on a Borel probability measure on $\T^2$ via $\Theta\mu = (\mu\mathcal{P}_{t_\nu^\ast})_A$. Then, for $1 \leq i \leq n_0$ we have 
\begin{equation}\label{eq:eta_representation}
    \eta_i = ((\Theta^{i - 1}\mu_0)\mathcal{P}_{t_\nu^\ast})_{A^c}.
\end{equation}
Since $\Theta$ is linear, we see that 
\begin{equation}\label{eq:eta_tv_bound}
    \|\eta_i\|_{TV} \leq 2\| (\Theta^{i - 1}\delta_{(x, y)})\mathcal{P}_{t_\nu^\ast} \|_{TV(A^c)}.
\end{equation}
For any nonnegative measure $\mu$ we have $\Theta \mu \le \mu \Pt_{t_\nu^*}$ and hence
\begin{equation} \label{eq:comparison}
\|(\Theta\mu)\mathcal{P}_{t_\nu^\ast}\|_{TV(A^c)} \leq \| \mu \mathcal{P}_{2t_\nu^\ast} \|_{TV(A^c)}. 
\end{equation}
Using \eqref{eq:comparison} in \eqref{eq:eta_tv_bound}, for any $1\le i \le n_0$ we have 
\begin{equation}\label{eq:etai_tv_2}
    \|\eta_i\|_{TV} \leq 2\| \delta_{(x, y)}\mathcal{P}_{i t_\nu^\ast} \|_{TV(A^c)}.
\end{equation}
Observe now that by Lemma~\ref{lem:timescalerelation} we have $t_\nu^* \le C_1 t_\nu(y)$ for some $
C_1 \ge 1$, and hence for every $1\le i \le n_0$ we have that 
$$ \sqrt{\nu i t_\nu^*} \le \sqrt{C_1}|\log \nu| \ell_\nu(y).$$
Thus, by Lemma~\ref{lem:heat} and the definition of $I(y)$ we have
\begin{equation} \label{eq:eta_tv_3}\|\delta_{(x,y)}\Pt_{it_\nu^*}\|_{TV(A^c)} \le \sqrt{2} e^{-\frac{1}{8C_1} |\log \nu|^2} 
\end{equation}
for every $1\le i \le n_0$.
Putting \eqref{eq:eta_tv_3} into \eqref{eq:etai_tv_2} completes the proof of \eqref{eq:eta_bound}.

\vspace{0.2cm}
\noindent \textbf{Case 2}: Next suppose that $y \in I(y_i)$ for some $1 \le i \le M$. By carrying out the same argument as in Case 1 but with $I(y)$ replaced by $\{y' \in \T: |y-y'| \le |\log \nu|^2 \ell_\nu(y_i)\}$ and $t_\nu^*$ replaced by $t_\nu(y_i)$, one can show that for some $n \le 2|\log \nu|^2$ and $c > 0$ we have 
\begin{equation} \label{eq:local5}
\|\delta_{(x,y)}\Pt_{n t_\nu(y_i)} - \delta_{(\tilde{x},y)}\Pt_{n t_\nu(y_i)}\|_{TV} \le \nu^{c|\log \nu|}  \end{equation}
for all $\nu$ sufficiently small. Now, recalling \eqref{eq:tnuAss2} and \eqref{eq:localrate}, it is easy to see that $$t_\nu(y_i)\approx \nu^{-\frac{k_i+1}{k_i+3}}\quad  \text{and} \quad t_\nu(y) \gtrsim |\log \nu|^{-\frac{4k_i}{3}} \nu^{-\frac{k_i+1}{k_i+3}}.$$
Thus, for some $C \ge 1$ we have 
$$ n t_\nu(y_i) \le C|\log\nu|^{\frac{4k_i}{3}+2}t_\nu(y),$$
and hence \eqref{eq:local5} implies \eqref{eq:localgoal}.
 
\end{proof}

\appendix 
\section{Proofs of lemmas on length and timescale relations} \label{appendix}

In this appendix, we collect the proofs of Lemmas~\ref{lem:technical1}, \ref{lem:technical2}, and \ref{lem:technicalradial}.

\begin{proof}[Proof of Lemma~\ref{lem:technical1}]
We begin with the proof of \eqref{eq:b'equivalence}. For $\epsilon > 0$, let $I_i(\epsilon) = (y_i - \epsilon, y_i + \epsilon)$ for $m\le i \le M$ and $\bar{I}_i(\epsilon) = (\bar{y}_i - \epsilon, \bar{y}_i +\epsilon)$ for $1 \le i \le N$. We decompose each punctured interval $\bar{I}_i(\epsilon) \setminus \{\bar{y}_i\}$ as 
$$ \bar{I}_i(\epsilon) \setminus \{\bar{y}_i\} = (\bar{y}_i-\epsilon,\bar{y}_i) \cup (\bar{y}_i, \bar{y}_i+\epsilon) : = \bar{I}_i^-(\epsilon) \cup \bar{I}_i^+(\epsilon).$$
By Assumption~\ref{Ass:T2general}a, there exists $\epsilon' > 0$ so that $b'$ is monotone in any interval $\bar{I}_i^\pm(\epsilon')$ in which $|b'|$ is unbounded. Defining $S = \T \setminus \cup_{i=1}^{m-1}I_i(\delta)$, we will consider separately the cases
\begin{equation}\label{eq:b'equivcases}
y \in S \setminus \Big(\bigcup_{i=m}^{M} I_i(\epsilon) \cup \bigcup_{i=1}^N \bar{I}_i(\epsilon)\Big), \quad y \in \bigcup_{i=m}^{M} I_i(\epsilon), \quad \text{and} \quad y \in \bigcup_{i=1}^N \bar{I}_i(\epsilon),
\end{equation}
where $\epsilon > 0$ will be chosen at least small enough so that $\epsilon \ll \epsilon'$ and $\nu$ is assumed small enough so that $\sup_{y\in \T}\ell_\nu(y) \ll \epsilon\wedge\delta$.
In the first case of \eqref{eq:b'equivcases}, the bound \eqref{eq:b'equivalence} with some $C > 0$ depending only on $\epsilon$ and $b$ follows immediately by the extreme value theorem. 
Consider next the case where $y \in I_i(\epsilon)$ for some $m \le i \le M$. Without loss of generality, we may assume that $y \in (y_i,y_i+\epsilon)$, which implies that $\iota(y) = 1$. Let $k \ge 2$ denote the minimal natural number such that $b^{(k)}(y_i) \neq 0$. By Taylor's theorem, for any $z \in (y,y+4\ell_\nu(y))$ we have 
$$ b'(z) = \frac{b^{(k)}(y_i)}{(k-1)!}(z-y_i)^{k-1}+\frac{b^{(k+1)}(\xi)}{k!}(z-y_i)^k $$
for some $\xi \in (y_i,z)$. It follows that for $\epsilon$ sufficiently small we have  
$$ \frac{1}{2}\Big|\frac{b^{(k)}(y_i)}{(k-1)!}\Big||z-y_i|^{k-1} \le |b'(z)| \le 2\Big|\frac{b^{(k)}(y_i)}{(k-1)!}\Big||z-y_i|^{k-1}. $$
One can use this to show that for any $h \in [1/2,4]$, one has 
$$ 4^{-k} \le  \frac{|b'(y+h\ell_\nu(y))|}{|b'(y+\ell_\nu(y))|} \le 4^k, $$
as desired. For the final case of \eqref{eq:b'equivcases}, it suffices to assume that $y \in \bar{I}^-_i(\epsilon)$ and $z \mapsto b'(z)$ increases monotonically to $\infty$ as $z\uparrow \bar{y}_i$. Then, $\iota(y) = -1$ and to prove \eqref{eq:b'equivalence} we must only show that there exists $C > 0$ that does not depend on $y$ such that 
\begin{equation} \label{eq:b'equivcase3goal}
    \frac{|b'(y-\frac{1}{2}\ell_\nu(y))|}{|b'(y-4 \ell_\nu(y))|} \le C.
\end{equation}
Define $F:[1/2,4] \to \R$ by 
$$F(h) = b'(y-h\ell_\nu(y)).$$
By the second estimate in Assumption~\ref{Ass:T2general}a, there exists $p > 0$ such that 
$$F'(h) = -\ell_\nu(y)b''(y-h\ell_\nu(y)) \ge - \frac{p\ell_\nu(y)}{y_i-y+h\ell_\nu(y)}F(h) \ge -\frac{p}{h}F(h). $$
Thus,  $F(h) \ge F(1/2)(2h)^{-p}$, which implies that \eqref{eq:b'equivcase3goal} holds with $C = 8^{p}$.

Next, we will prove \eqref{eq:tnuboundregular}. First, observe that by the definitions of $t_\nu$ and $\ell_\nu$, \eqref{eq:tnuboundregular} is equivalent to the bounds 
\begin{equation}\label{eq:tnuboundequiv}
   \frac{1}{C} \ell_\nu(y)|b'(y+\iota(y)\ell_\nu(y))| \le |b(y+\iota(y)\ell_\nu(y)) - b(y)| \le C\ell_\nu(y)|b'(y+\iota(y) \ell_\nu(y))|.
\end{equation}
The first inequality above is an immediate consequence of \eqref{eq:b'equivalence}. Indeed, using the fact that the sign of $b'$ does not change over the interval between $y$ and $y+\iota(y)\ell_\nu(y)$, it follows from \eqref{eq:b'equivalence} that for some $c > 0$ we have 
$$ |b(y+\iota(y)\ell_\nu(y)) - b(y)| \ge \frac{\ell_\nu(y)}{2} \min_{h \in [1/2,1]}|b'(y+\iota(y)\ell_\nu(y)h)| \ge c\ell_\nu(y)|b'(y+\iota(y)\ell_\nu(y))|.$$
We now turn to the second inequality in \eqref{eq:tnuboundequiv}. In both the first case of \eqref{eq:b'equivcases} and when $y$ is in a half interval $\bar{I}_i^\pm$ where $b'$ remains bounded, the inequality is immediate from the fundamental theorem of calculus and the extreme value theorem. In the second case of \eqref{eq:b'equivcases}, the estimate follows easily from the fact that $b''(z)$ has a fixed sign in each of the intervals $(y_i,y_i+2\epsilon)$ and $(y_i-2\epsilon,y_i)$ provided that $\epsilon$ is sufficiently small. It remains to consider the case where $y \in I_i^{\pm}(\epsilon)$ and the half interval is such that $b'$ becomes unbounded. Suppose without loss of generality that $y \in \bar{I}_i^+(\epsilon)$ and $z \mapsto b'(z)$ increases monotonically to $\infty$ as $z \downarrow \bar{y}_i$. If $y-y_i \le \ell_\nu(y)$, then by the first estimate in Assumption~\ref{Ass:T2general}a we have 
$$ |b(y+\ell_\nu(y)) - b(y)|\le |b(y+\ell_\nu(y)) - b(y_i)| \le C|y-y_i + \ell_\nu(y)||b'(y+\ell_\nu(y))| \le 2C\ell_\nu(y)|b'(y+\ell_\nu(y))|. $$
If instead $y - y_i > 2\ell_\nu$, then an argument similar to the proof of \eqref{eq:b'equivcase3goal} shows that there exists $c > 0$ such that 
$$|b'(y+\ell_\nu(y))| \ge c|b'(z)| \quad \forall z \in [y,y+\ell_\nu(y)],$$
from which the second inequality of \eqref{eq:tnuboundequiv} follows immediately. 
\end{proof}

\begin{proof}[Proof of Lemma~\ref{lem:technical2}]
    We may assume without loss of generality that $y \in (y_i,y_i+\delta)$, so that $\iota(y) = 1$. As $h \mapsto |b'(y_i + h)|$ is monotone increasing for $h \in [0,2\delta]$ and 
$$t_\nu(y) = t_\nu(y_i) = \frac{1}{|b(y_i+\ell_\nu(y_i)) - b(y_i)|}$$
by definition, we have 
\begin{align*} 
\frac{1}{\sqrt{\nu}|b'(y+C\ell_\nu(y))|} & \le \frac{1}{\sqrt{\nu}|b'(y_i+C\ell_\nu(y_i))|} \\ 
& = \Big(\frac{|b(y_i+ \ell_\nu(y_i)) - b(y_i)|}{\sqrt{\nu}|b'(y_i+C\ell_\nu(y_i))|}\Big) t_\nu(y_i) \\
& = \Big(\frac{\sqrt{|b(y_i+ \ell_\nu(y_i)) - b(y_i)|}}{\ell_\nu(y_i)|b'(y_i+C\ell_\nu(y_i))|}\Big) t_\nu(y).
\end{align*}
The quantity in parentheses above remains bounded as $\nu \to 0$ by Assumption~\ref{Ass:T2general}b, and so \eqref{eq:infiniteordertechnical} follows.
\end{proof}

\begin{proof}[Proof of Lemma~\ref{lem:technicalradial}]

Due to \eqref{eq:qdef}, we may assume without loss of generality that $\lim_{r \to 0}r^{-q}b'(r) = 1$. Moreover, by \eqref{eq:R2assumption} there exist $c_1,C_1 > 0$ and $R \ge 2$ such that 
\begin{equation} \label{eq:b'nearinf}
    |b'(r)| \ge c_1 \quad \text{and} \quad \frac{|b''(r)|}{|b'(r)|} \le C_1 
\end{equation}
for all $r \ge R-1$. Let $0 < \delta \ll h_0 \wedge \sup_{r\ge 0} \ell_\nu(r)$ be chosen small enough so that 
\begin{equation} \label{eq:b'nearzero}
\frac{r^q}{2} \le b'(r) \le 2 r^q 
\end{equation}
for all $r \in [0,2\delta)$ and define $I = [0,\delta)$. The proof in the case that $\delta \le r < R$ is already taken care of by Lemma~\ref{lem:technical1}. When $r \in I$, the estimate \eqref{eq:b'equivalencerad} is immediate from \eqref{eq:b'nearzero} and \eqref{eq:tnuboundregularrad} also follows just as easily from \eqref{eq:b'nearzero} and the fact that it is equivalent to the bound 
\begin{equation}\label{eq:tnuradequiv}
\frac{1}{C} \ell_\nu(r)|b'(r+\iota(r)\ell_\nu(r))| \le |b(r+\iota(r)\ell_\nu(r)) - b(r)| \le C \ell_\nu(r)|b'(r+\iota(r)\ell_\nu(r))|
\end{equation}
for some $C \ge 1$. We now suppose that $r \ge R$. Define $F:[0,1] \to \R$ by $F(h) = |b'(r+\iota(r)h)|$. By \eqref{eq:b'nearinf}, we have $F'(h) \le C_1 F(h)$ and hence it follows from Gr\"{o}nwall's lemma that
\begin{equation} \label{eq:tnuradequiv1}
  F(h) \le F(0)e^{C_1 h}.  
\end{equation}
The bound \eqref{eq:b'equivalencerad} is immediate from \eqref{eq:tnuradequiv1}, and \eqref{eq:tnuboundregularrad} follows from \eqref{eq:tnuradequiv1}, the equivalence of \eqref{eq:tnuboundregularrad}
and \eqref{eq:tnuradequiv}, and the fact that $\bar{r} \mapsto b'(\bar{r})$ does not change sign for $\bar{r} \in (R-1,\infty)$. \end{proof}

\addcontentsline{toc}{section}{References}
\bibliographystyle{abbrv}
\bibliography{shearbib}

\end{document}